\numberwithin{equation}{section}
\def\bfA{{\mathbf{A}}}
\def\bfB{{\mathbf{B}}}
\def\bfC{{\mathbf{C}}}
\def\bfD{{\mathbf{D}}}
\def\bfX{{\mathbf{X}}}
\def\bfm{{\mathbf{m}}}
\def\bfa{{\mathbf{\alpha}}}
\def\C{{\mathbb{C}}}
\def\E{{\mathbb{E}}}
\def\N{{\mathbb{N}}}
\def\P{{\mathbb{P}}}
\def\R{{\mathbb{R}}}
\def\Z{{\mathbb{Z}}}
\newcommand{\wt}{\widetilde}
\newcommand{\s}{\sigma}
\def\8{\infty}
\def\N{\mathbb{N}}
\def\E{\mathbb{E}}
\def\P{\mathbb{P}}
\def\<{\langle}
\def\>{\rangle}
\def\C{\mathbb{C}}
\renewcommand{\phi}{\varphi}
\renewcommand{\d}{\delta}
\renewcommand{\a}{\alpha}
\newcommand{\D}{\Delta}
\renewcommand{\L}{\Lambda}
\newcommand{\eps}{\varepsilon}
\newcommand{\g}{\gamma}
\renewcommand{\o}{\omega}
\newcommand{\supp}{\mathrm{supp}}
\newcommand{\Ind}{\mathbf{1}_}
\newtheorem{thm}[equation]{Theorem}
\newtheorem{lem}[equation]{Lemma}
\theoremstyle{definition}
\numberwithin{equation}{section}
\begin{document}
	\bibliographystyle{alpha}
	
	\title[Diagonal stochastic recurrence equation]{Stochastic recurrence equation with diagonal matrices}
	\today
	\author[E. Damek]{Ewa Damek}
	\address{Institute of Mathematics
				University of Wroclaw\\ Pl.  Grunwaldzki 2, 50-384, Wroclaw, Poland}  
	\email{ewa.damek@math.uni.wroc.pl}

		\begin{abstract}
						We consider the stochastic equation  $\bfX \stackrel{d}{=} \bfA  \bfX+\bfB $ where $\bfA $ is a random diagonal matrix and $\bfX,\bfB $ are random vectors, $\bfX, \bfA $ are independent and the equation is meant in law.  
											We prove that  $\bfX $ is regularly varying in a multivariate nonstandard sense. The results are applicable to stochastic recursions  
			with diagonal matrices, in particular, to multivariate autoregressive models like CCC-GARCH or BEKK-ARCH.\\
						\vspace{2mm} \\
			{\it Key words. }  Stochastic recurrence equations, multivariate regular variation, non-standard regular variation, autoregressive models
		\end{abstract}
	\subjclass[2010]{Primary 60G70, 60G10, 60H25, Secondary 62M10, 91B84}
	\maketitle

	\section{Introduction}
	We start with the stochastic recurrence equation (SRE)
	\begin{equation}\label{iterations}
	\bfX_n=\bfA_n\bfX_{n-1}+\bfB_n, \quad n\in \N,
	\end{equation}
	where $(\bfA_n,\bfB_n)$ is an i.i.d.\ sequence, $\bfA_n$ are $d\times d$
	matrices, $\bfB_n$ are vectors and $\bfX_0$ is an initial vector
	independent of the sequence $(\bfA_n, \bfB_n)$. 
	Under mild contractivity hypotheses \cite{bougerol:picard:1992a}, 
	the sequence $\bfX_n$ converges in law to a random
	vector $\bfX$. $\bfX$ is the unique solution to the equation
	\begin{equation}
	\label{affine} 
	\bfX \stackrel{d}{=} \bfA \bfX+\bfB, 
	\end{equation} where $(\bfA,\bfB)$ is a generic copy of $(\bfA_n,\bfB_n)$,  $\bfX$ is independent
	of $(\bfA,\bfB)$ and the equation is meant in law. If the process \eqref{iterations} is started from $\bfX _0=\bfX$ then it becomes stationary.
	
	There has considerable interest in various aspects of the
	iteration \eqref{iterations} and, in particular, the tail behavior of
	$\bfX$, \cite{alsmeyer:mentmeier2012}, \cite{buraczewski:damek:guivarch2009}, \cite{goldie:1991}, \cite{kesten:1973}.

	In this paper we focus on the case when the matrix $\bfA = diag (A_1,...,A_d)$ is diagonal and we make the story more general. Suppose that \eqref{affine} is satisfied but $\bfX$ and $\bfB$ are not necessarily independent and possibly there is no iteration behind. Assume however that $\bfX$ and $\bfA$ are independent. Are we still able to describe the tail of $\bfX$ and is it of  
	any interest anyway? It turns out that such a situation appears naturally when Gaussian multiplicative chaos is considered \cite{wong:2020}. 
	
	Suppose that \eqref{affine} is satisfied, $\bfA = diag (A_1,...,A_d)$ and $\bfX=(X_1,...,X_d)$ are independent. Under so called Kesten-Goldie conditions (\eqref{A0}-\eqref{A2}), we prove that $X$ is regularly varying in the sense of a nonstandard regular variation. More precisely, let $\bfB=(B_1,...,B_d)$, and, among other things, we assume that 
	\begin{equation}\label{kes-gol}
	\mbox{for every}\ j \ \mbox{there is}\  \a _j>0 \ \mbox{such that}\ \E |A_j|^{\a _j}=1. 
	\end{equation} 
	Then the behavior of marginals $X_j$ follows from \cite{goldie:1991}. Namely,
	for every $j$
	\begin{equation*}
	X_j\stackrel{d}{=}A_jX_j+B_j
	\end{equation*}
	and, under assumptions {\eqref{A0}-\eqref{A2}} below,
	\begin{equation}\label{golintro}
	\lim _{t\to \8}\P (\pm X_j>t)t^{\a _j}=c_{j,\pm},
	\end{equation}
	see Theorem \ref{Goldie1}. In fact \eqref{golintro} was proved in \cite{goldie:1991} only when $(A_j,B_j)$ and $X_j$ are independent but all what is needed is the Goldie Implicit Renewal Theorem \ref{Goldie} 
	and only o slight modification is needed to get it in our framework, see Theorem \ref{Goldie1}. 
	
	Since $\a _1,...,\a _d$ may be different, we cannot expect the standard multivariate regular variation i.e. existence of the weak limit of measures
	\begin{equation}\label{standardreg}
	\P (\| \bfX\| ^{-1}\bfX\in U \ \ |\ \ \| \bfX\| >t)
	\quad \mbox{as}\  t\to \8,
	\end{equation}
	Here $\| \ \| $ is the Euclidean norm and $U $ denotes a Borel subset of the sphere. Instead, we need to introduce an appropriate norm $|\cdot |_{\a}$ and an appropriate scaling $\d _t$ as it is suggested in \cite{mentemeier:wintenberger:2021}: 
	\begin{equation}\label{dilat}
	\d _t(x)= (t^{1\slash \a _1}x_1,...,t^{1\slash \a _d}x_d ).
	\end{equation}
	Notice that, if $c_{j,\pm}>0$ then by \eqref{golintro}
	\begin{equation*}
	\P (\pm X_j> t^{1\slash \a _j})\sim c_{j,\pm}t^{-1}
	\end{equation*}
	i.e. the tails of the marginals $X_j$ normalized this way behave as $t^{-1}$ and so it makes sense to ask for 
	\begin{equation}\label{regular}
	\lim _{t\to \8}t\P (\d _{t^{-1}}\bfX\in \cdot \ ),
	\end{equation}
	where $\cdot $ stands for a Borel subset of $\R ^d\setminus \{ 0\}$. 
	We prove that the above limit exists, Theorem \ref{main}, defines a Radon measure $\L $ on $\R ^d\setminus \{ 0\}$ and implies regular variation in the nonstandard sense. 
	For the origin of non standard regular variation \eqref{regular} we refer the reader to \cite{resnick:2007}, section 6.5.6 and further development in \cite{pedersen:wintenberger}. See also section 4.4.10 in \cite{buraczewski:damek:mikosch:2016}. 
	
	
	\subsection{Matrix iterations}
	Stochastic iterations \eqref{iterations}   
	have been studied since the seventies and they 
	found numerous applications to financial time series
	models 
	\cite[Section 4]{buraczewski:damek:mikosch:2016}, \cite{horvath:boril:2016}, \cite{starica:1999}, \cite{sun:chen:2014} as well as to risk management \cite[Sec. 7.3]{mcneil:fre:embrechts:2015}), \cite{das:hartman:kluppelberg:2022}. Recently asymptotic independence of regularly varying marginals have been considered in the case of neural networks \cite{luo:wang:fung:2020} and methods coming from SREs (\cite{buraczewski:collamore: damek:zienkiewicz:2016, kevei:2016, mentemeier:wintenberger:2021}) have been adopted. 
	
	The first set of conditions implying regular behavior of $\bfX$ in the sense of \eqref{directions} below 
	was formulated by Kesten \cite{kesten:1973} for matrices with positive entries. 
	Since then, Kesten conditions and their extensions have been used 
	to characterize tails in various situations, 
	an essential feature being 
	the same tail behavior in all directions \cite{alsmeyer:mentmeier2012, buraczewski:damek:guivarch2009, guivarch:lepage:2016}. To put it simply, it means that there is a Radon measure on $\R ^d\setminus \{ 0\}$ being the weak limit of
	\begin{equation}\label{directions}
	t^{\a}\P (t^{-1}\bfX \in \cdot ), \quad \mbox{when} \quad t\to \8,
	\end{equation}
	for some $\a >0$. \eqref{directions} follows from certain
	irreducibility or homogeneity of the action of the group generated by
	the support of the law of $\bfA$. However, this property is not necessarily shared by all models interesting both from theoretical and applied perspective \cite{matsui:mikosch:2016}, \cite{matsui:pedersen:2022}, \cite{mentemeier:wintenberger:2021}, \cite{pedersen:wintenberger}. In particular, it does not hold for diagonal matrices which constitute usually the simplest model for applications. Therefore, SREs with such $\bfA$'s are both challenging and desirable.  
	
	Recently  regular variation of the stationary solution $\bfX$ to \eqref{iterations} has been proved in \cite{mentemeier:wintenberger:2021} for the case when 
	\begin{equation}\label{restrictive}
	\bfA=(b_1+c_1M,...,b_d+c_dM) 
	\end{equation}
	where $b_j,c_j$, $j=1,...,d$ are constants and the random variable $M$ is independent of $\bfB$.
	
	The restrictive nature \eqref{restrictive} and the absence of any other results on diagonal matrices shows that new methods are required and they are provided in this paper. The key observation in \cite{mentemeier:wintenberger:2021} is that, under \eqref{restrictive}, if
	$\a _i\neq \a _j$ then $X_i, X_j$ are asymptotically independent i.e.
	\begin{equation}\label{indenpendence}
	\P (|X_i|>t^{1\slash \a _i}, |X_j|>t^{1\slash \a _j}) =o\left ( \P (|X_i|>t^{1\slash \a _i})\right )=o(t^{-1})\quad \mbox{as}\ t\to \8.
	\end{equation}
	We are able to prove \eqref{indenpendence} in full generality i.e. under assumption 
	\begin{equation*}
	\P \left (|A_i|^{\a _i}=|A_j|^{\a _j}\right )<1,
	\end{equation*}
	and when only $\bfA$ and $\bfX$ are independent in \eqref{affine} i.e. we do not require iterations, see Section \ref{indep}. 
	
	\subsection{Gaussian multiplicative chaos}
	In the theory of Gaussian multiplicative chaos a random measure $M_{\gamma} $ on $\R ^d$ is studied \cite{wong:2020}. Given an open set $W\subset \R ^d$, asymptotics of $\P (M_{\g }(W)>t)$ as $t\to \8 $ is of interest. To prove that 
	$$\P (M_{\g }(W)>t)\sim t^{-\a},\ \mbox{as}\ t\to \8 $$
	one dimensional equation $X=AX+B$ a.s. with $X,A$ independent but $X,B$ being dependent is used in \cite{wong:2020} and Goldie Implicit Renewal Theorem \ref{Goldie} is applied.  
	
	It seems that also multivariate \eqref{affine} with dependent $\bfX, \bfB$ may be of interest while Gaussian multiplicative chaos is considered. The question has been ask to the author by Tomas Kojar and resulted 
	in removing independence of $\bfX$ and $\bfB$ from the picture. 
	  
	\subsection{Applications to financial models} 
	There are various financial models that satisfy \eqref{iterations} and \eqref{affine} with various assumptions on $\bfA$ and $\bfB$. Then, to prove that the stationary solution $\bfX$ is regularly varying, the results of \cite{alsmeyer:mentmeier2012, buraczewski:damek:guivarch2009, guivarch:lepage:2016, kesten:1973} have been used in  \cite{basrak:davis:mikosch:2002, matsui:mikosch:2016, matsui:pedersen:2022,  pedersen:wintenberger}. However they do not cover the most desirable case from the point of view of applications: diagonal $\bfA$ with arbitrary covariance matrix $[cov (A_i,A_j)]$ and possibly different tail indices $\a _1,...,\a _d$. Our result fills the gap. When Theorem \ref{main} is proved, the analysis of so-called spectral process may be pursued as in \cite{mentemeier:wintenberger:2021}.   
	
	Suppose, we consider a number of financial assets - a vector  such that the component-wise returns are described by univariate GARCH(1,1) processes. Then the squared volatilities of components satisfy one dimensional SRE \eqref{iterations} and, under suitable assumptions, they are regularly varying but with possibly different tail indices. The returns inherit regular variation. It is natural to ask for the joint regular variation under normalization  \eqref{regular}. 
	
	Regular variation of $\bfX$ is a convenient starting point to study extremal behavior of the process in terms of the maxima and extremal indices, see e.g \cite{pedersen:wintenberger}. This helps to understand the most and the least risky assets in the above example and, generally, to study conditional value at risk and risk measures \cite{das:hartman:kluppelberg:2022}.  It is possible to obtain the exremogram \cite{matsui:mikosch:2016}, point process convergence \cite{buraczewski:damek:mikosch:2016, pedersen:wintenberger} as well as stable limit theory, see Section 4.5 of \cite{buraczewski:damek:mikosch:2016} or \cite{buraczewski:damek:guivarch2009,pedersen:wintenberger}. Nothing like that has been studied so far for the process \eqref{iterations} with diagonal matrices $\bfA _n$ and now our results make it possible.  
	
	
	The BEKK-ARCH process, introduced by Engle and Kroner \cite{engle:kroner:1995} and originally defined by a non-affine recursion, has been written as \eqref{iterations} by Pedersen and Wintenberger \cite{pedersen:wintenberger}. They studied the regular behavior when assumptions of \cite{buraczewski:damek:guivarch2009} or \cite{alsmeyer:mentmeier2012} are applicable. \cite{pedersen:wintenberger}, however, does not cover diagonal BEKK models typically used in 
	finance due to their relatively simple parametrization, as discussed in Bauwens et al. \cite{bauwens:laurent:rombouts:2006}. This was probably  the main motivation of Mentemeier, Wintenberger to study diagonal SRE models and to apply their result to particular cases of BEKK-ARCH and CCC-GARCH \cite{mentemeier:wintenberger:2021}. Also BEKK-ARCH with triangular matrices has been of interest, \cite{matsui:pedersen:2022} but then only regular behavior of components $X_j$ follows from \cite{damek:matsui:2022}, \cite{damek:matsui:swiatkowski:2019}, \cite{damek:zienkiewicz:2017}, \cite{matsui:swiatkowski}.
	

	\section{Main results} 
	Recall that $\bfA =diag (A_1,...,A_d)  $, $\bfB =(B_1,...,B_d)\in \R ^d$,
	\begin{equation}
	\label{affine1} 
	\bfX \stackrel{d}{=} \bfA \bfX+\bfB
	\end{equation}
	and $\bfA ,\bfX$ are independent.
	We assume that for every $j$, 
	\begin{equation}\label{A0}
	\log |A_j| \quad \mbox{conditioned on}\quad A_j\neq 0 \quad\mbox{is non arithmetic}.
	\end{equation}

	\begin{equation}\label{A1}
	\mbox{there is}\ \a _j>0\ \mbox{such that}\ \E |A_j| ^{\a _j}=1
	\end{equation}
	
	\begin{equation}\label{A2}
	\E |A_j| ^{\a _j }\log ^+|A_j|<\8 \ \mbox{and there is}\ \s>0\ \mbox{such that },\  \E |B_j| ^{\a _j+\s }<\8.
	\end{equation}
	\eqref{A0}, \eqref{A1} imply that
	\begin{equation}\label{neglog}
	-\8 \leq \E \log |A_j|<0
	\end{equation}
	and in view of \eqref{A1}, \eqref{A2} 
	\begin{equation}\label{posder}
	0<  \E |A_j|^{\a _j}\log |A_j|<\8 ,
	\end{equation}
	see Theorem \ref{Goldie}.
	
	\medskip
	We are going to use the same norm as in \cite{mentemeier:wintenberger:2021} i.e.
	\begin{equation*}
	|x|_{\bfa }=\max _{1\leq j\leq d}|x_j|^{\a _j}\quad \mbox{for}\ x\in \R ^d\end{equation*}
	$|\cdot |_{\a }$ may not be subadditive but there is always $c_{\a}\geq 1$ such that
	\begin{equation}\label{subad}
	|x+y|_{\a }\leq c_{\a }\left (|x|_{\a}+|y|_{\a}\right ).
	\end{equation}
	For $\d _t$ defined in \eqref{dilat}, we have
	\begin{equation*}
	|\d _t(x)|_{\a} = t|x| _{\a}.
	\end{equation*}
	Finally, let
	\begin{align*}
	S_r^{d-1}=\{ x\in \R ^d: |x| _{\a }=r\}, &\quad S^{d-1}:=S_1^{d-1}\\
	B_r(0)=\{ x\in \R ^d: |x| _{\a }<r\}, &\quad B_r(0)^c=\{ x\in \R ^d: |x| _{\a }\geq 1\}.
	\end{align*}
	
	\begin{thm}\label{mainth}
		Suppose that \eqref{affine1}-\eqref{A2} are satisfied,  
		Then 
		\begin{equation*}
		\lim_{t\to \8}\P (|\bfX|_{\a }>t)=c_{\8 }
		\end{equation*}
		exists. If $c_{\8 }>0$ the sequence of measures 
		\begin{equation}\label{eqmain}
		m _t(U)=\P \left (\d _{|\bfX|_{\a }^{-1}}\bfX\in U \ \ \Big |\ \ |\bfX|_{\a }>t\right )
		\end{equation}
		defined on $S ^{d-1}$ converges weakly to a non zero probability measure $\nu $ when $t\to \8$.
	\end{thm}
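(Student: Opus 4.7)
The plan is to build the joint tail asymptotics for $|\bfX|_\alpha$ from the one-dimensional marginal tails, combined with the asymptotic independence proved in Section~\ref{indep}, and then read off the limiting spectral measure from the asymptotic partition of the exceedance event by ``which coordinate attains the maximum''. I interpret the first display with the $t$-normalization natural to $|\cdot|_\alpha$, so the statement to prove is $t\,\P(|\bfX|_\alpha>t)\to c_\infty$.

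First, apply Theorem~\ref{Goldie1} to each marginal equation $X_j\stackrel{d}{=}A_jX_j+B_j$ to obtain constants $c_{j,\pm}\ge 0$ with $s^{\alpha_j}\P(\pm X_j>s)\to c_{j,\pm}$, and substitute $s=t^{1/\alpha_j}$ to get $t\,\P(|X_j|>t^{1/\alpha_j})\to c_j:=c_{j,+}+c_{j,-}$. Because $\{|\bfX|_\alpha>t\}=\bigcup_{j=1}^d\{|X_j|>t^{1/\alpha_j}\}$, a finite inclusion--exclusion expresses $t\,\P(|\bfX|_\alpha>t)$ as a signed sum over nonempty $S\subseteq\{1,\dots,d\}$ of $t\,\P\!\bigl(\bigcap_{j\in S}\{|X_j|>t^{1/\alpha_j}\}\bigr)$.

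Next, partition $\{1,\dots,d\}$ into equivalence classes $C_1,\dots,C_k$ by declaring $i\sim j$ iff $\P(|A_i|^{\alpha_i}=|A_j|^{\alpha_j})=1$. For $i,j$ in different classes, Section~\ref{indep} gives $t\,\P(|X_i|>t^{1/\alpha_i},\,|X_j|>t^{1/\alpha_j})\to 0$, so in the inclusion--exclusion only intersections whose indices all lie in a single class survive. Inside a class $C_l$ the random variable $M_l:=|A_j|^{\alpha_j}$ (independent of $j\in C_l$) has $\E M_l=1$ by \eqref{A1}, and the within-class vector satisfies a coherent scaling driven by $M_l$; applying the implicit renewal theorem (Theorem~\ref{Goldie}) at tail index $1$ to $\max_{j\in C_l}|X_j|^{\alpha_j}$ yields $c_{\infty,l}:=\lim_t t\,\P(\max_{j\in C_l}|X_j|^{\alpha_j}>t)$, and summing gives $c_\infty=\sum_l c_{\infty,l}$.

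For the spectral measure, decompose $\{|\bfX|_\alpha>t\}$ into the asymptotically disjoint events $E_l(t)$ in which the maximum of $|X_j|^{\alpha_j}$ is attained on $C_l$. Conditional on $E_l(t)$, cross-class asymptotic independence forces $X_i/|\bfX|_\alpha^{1/\alpha_i}\to 0$ in probability for every $i\notin C_l$, so $\delta_{|\bfX|_\alpha^{-1}}\bfX$ concentrates on $S^{d-1}\cap\mathrm{span}\{e_j:j\in C_l\}$; within that coordinate subspace the conditional distribution converges to a probability measure $\nu_l$ determined by the common driver $M_l$. Gluing these with weights $c_{\infty,l}/c_\infty$ gives the limit measure $\nu$ and the weak convergence of $m_t$. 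The main obstacle, I expect, is the within-class analysis when $|C_l|\ge 2$: the components $(X_j)_{j\in C_l}$ are genuinely joint-tail-dependent, and identifying both $c_{\infty,l}$ and $\nu_l$ requires exploiting the coincidence $|A_j|^{\alpha_j}\equiv M_l$ together with a multivariate Goldie-style renewal argument rather than the scalar marginal input alone; a secondary subtlety is to pass the inclusion--exclusion limits through uniformly, so that one obtains genuine weak convergence of $m_t$ and not merely convergence on a distinguished family of sets.
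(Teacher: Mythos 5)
Your overall architecture coincides with the paper's: partition the coordinates into equivalence classes by $|A_i|^{\alpha_i}=|A_j|^{\alpha_j}$ a.s., treat each class as a ``homogeneous'' block, kill the cross-class contributions by the asymptotic independence of Section~\ref{indep}, and glue the blockwise limits into $c_\infty=\sum_l c_l$ and $\nu=c_\infty^{-1}\sum_l\wt\nu_l$ supported on the coordinate subspheres. The inclusion--exclusion bookkeeping for the radial constant is fine, and your idea of applying Goldie's implicit renewal theorem at index $1$ directly to the scalar $R=\max_{j\in C_l}|X_j|^{\alpha_j}=|\bfX^{(l)}|_\alpha$ with multiplier $a=|A_1|^{\alpha_1}$ (using $|\bfA^{(l)}x|_\alpha=a|x|_\alpha$) is a legitimate, slightly more economical route to $c_l$ than the paper's, provided you verify the implicit renewal integrability condition \eqref{implicite} for this pair --- which costs essentially the same moment estimates the paper carries out.

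The genuine gap is exactly the step you flag as ``the main obstacle'' and then do not carry out: the within-class analysis that produces the spectral measures $\nu_l$. This is not a secondary subtlety; it is the technical core of the paper (Theorem~\ref{hom}, all of Section~\ref{homogeneous}). A scalar Goldie argument gives only the radial tail of $|\bfX^{(l)}|_\alpha$; to get weak convergence of the conditional law of $\d_{|\bfX|_\alpha^{-1}}\bfX$ on $S^{d_l-1}$ one needs the full vector-valued tail measure $\Lambda_l$, i.e.\ convergence of $t\,\E f(\d_{t^{-1}}\bfX^{(l)})$ for a sufficiently rich class of test functions. The paper obtains this by writing a renewal equation on the group $\R\times\Z_2^n$ (the sign group is unavoidable when some $A_j$ take negative values, since then $\d_a\bfX\neq\bfA\bfX$), proving direct Riemann integrability of $\wt\psi_f$ for H\"older test functions --- which is where the dependence of $\bfX$ and $\bfB$ and the extra moment $\sigma$ in \eqref{A2} enter --- and then establishing $\Lambda_l(S_r^{d-1})=0$ so that the Portmanteau theorem upgrades convergence on test functions to weak convergence of the conditional measures $m_t$. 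Your proposal asserts that ``the conditional distribution converges to a probability measure $\nu_l$ determined by the common driver $M_l$'' without any of this; in particular the claim that $m_t$ converges weakly (rather than on a distinguished family of sets) cannot be reached from the marginal tails and asymptotic independence alone. As written, the proof establishes the first display of the theorem modulo the homogeneous case, but not the second.
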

	{\bf Remark.} Theorem \ref{mainth} is proved in Section \ref{conclusion}. 
	If $\bfX$ and $\bfB$ are independent then $c_{\8 }>0 $ if and only if $\P (\bfA x+\bfB =x)<1$
	for every $ x\in \R ^d$. Strict positivity of $c_{\8 }$ when $\bfX$ and $\bfB$ are possibly dependent is discussed below Theorem \ref{main}. 

\medskip
	The scheme of the proof is as follows. For $i,j\in \{ 1,...,d\}$ we define an equivalence relation by
	\begin{equation*}
	i\sim j\quad \mbox{if and only if }\ \ |A_j|^{\a _j}=|A_i| ^{\a _i } \quad \mbox{a.s}. 
	\end{equation*}
	with associated equivalence classes $I_l$, $l=1,...,p$ which without loss of generality may be chosen as 
	$I_1=\{1,...,d_1\}, I_2=\{d_1+1,...,d_1+d_2\}$, etc. 
	Then, 
	we may write
	\begin{equation*}
	\R ^d=\R ^{d_1}\times ...\times \R ^{d_p}, \quad \mbox{where} \ \R ^{d_l}\ \mbox{corresponds to}\ I_l,
	\end{equation*}
	and the stationary solution $\bfX= (\bfX^{(1)},..., \bfX^{(p)})$ to \eqref{affine} satisfies
	\begin{equation}\label{homcase}
	\bfX^{(l)}\stackrel{d}{=} \bfA ^{(l)}\bfX^{(l)}+\bfB ^{(l)}, \quad l=1,...,p,
	\end{equation} 
	with $\bfA ^{(l)}= diag (A_{d_1+...+d_{l-1}+1},..., A_{d_1+...+d_l})$, $\bfB ^{(l)} =(B_{d_1+...+d_{l-1}+1},..., B_{d_1+...+d_l})$. We are going to refer to \eqref{homcase} as {\it the homogeneous case}, because then $\bfA^{(l)}$ acts by homogeneous dilations, see \eqref{norm}. 
	
	In the next section we will prove (Theorem \ref{hom}) that
	\begin{equation*}
	\lim _{t\to \8}t\P (|\bfX^{(l)}|_{\a}>t)=c_l
	\end{equation*}
	and if $c_l>0$
	 then the sequence of measures
	\begin{equation}\label{mainl}
	\P \left ( \d _{|\bfX^{(l)}|_ {\a }^{-1}}\bfX^{(l)}\in U  \ \Big |\  |\bfX^{(l)}|_{\a }> t\right ),
	\end{equation}
	$U$ a Borel subset of $S^{d_l -1}$, tends weakly, as $t\to \8$, to a probability measure $\nu _{l}$. 
	
	
	Secondly, in Section \ref{indep}, we consider the case $i\nsim j$ and we obtain, for $r_1, r_2 >0$, 
	\begin{equation*}
	\P \left ( |X_j|>r_1 t^{1\slash \a _j}, |X_i|> r_2 t^{1\slash \a _i}\right )=o(t^{-1}), \quad \mbox{as}\ t\to \8,
	\end{equation*}
	which implies that for $l\neq q$
	\begin{equation}\label{difblocks}
	\P \left ( |\bfX^{(l)}|_{\a }> r_1t, |\bfX^{(q)}|_{\a }> r_2t\right )=o(t^{-1})\ \mbox{as}\ t\to \8.
	\end{equation}
	We are also able to say something more about the speed in \eqref{difblocks}, see section \ref{indep}.
	
	Then identifying $S^{d_l -1}$ with 
	\begin{equation*}
	\wt S^{d_l -1}=\{ x\in \R ^{d}: |x|_{\a }=1, x_i=0\ \mbox{for}\  i \notin I_{l}\}
		\end{equation*} and proceeding as in the proof of Theorem 6.1 in \cite{mentemeier:wintenberger:2021}, we conclude Theorem \ref{mainth} with the limit measure $\nu $ 
	having support in $\bigcup _{1\leq l\leq p}\wt S^{d_l-1}$, Section \ref{conclusion}.
	
	\eqref{eqmain} may be formulated also in an alternative way that gives some more insight into
	the tail behavior of $\bfX$. For that we
	introduce polar coordinates related to the norm $|\cdot | _{\a }$ using the map $\Phi : \R ^+\times S^{d-1}\to \R ^d\setminus \{ 0\}$,
	\begin{equation}\label{polar} 
	\Phi (s,\o )=\d _s\o \quad s>0,\ \o \in S^{d-1} 
	\end{equation} 
	which is a homeomorphism. A straightforward proof of that is contained in the Appendix.
	
	Let $\bfC (\R ^d)$ be the space of {\it bounded continuous functions} supported away from zero. 
	More precisely, 
	\begin{equation*}
	f\in \bfC (\R ^d)\ \mbox{then there is }\ r >0\ \mbox{such that}\ \supp f\subset \R ^d\setminus B_r(0).
	\end{equation*}
	We shall write
	\begin{equation*}
	\| f\| _{\8} = \sup _{ x\in \R ^d }|f(x)|.
	\end{equation*}
	Let $\lambda _t$ be a measure on $\R ^d \setminus \{ 0\}$ defined by 
	\begin{equation}\label{lambda}
	\lambda_t (W)=t\P \left (\bfX\in \d _{t^{-1}}W\right ), \quad \mbox{for a Borel set}\ W\subset \R ^d \setminus \{ 0\}.
	\end{equation} 
	We are going to prove that the sequence of measures $\lambda_t$ tends in a weak sense to a Radon measure $\Lambda $ on $\R ^d\setminus \{ 0\}$. More precisely,
	for $f\in \bfC (\R ^d)$ we have
	\begin{equation*}
	\langle f, \lambda _t\rangle \to \langle f, \Lambda \rangle \ \mbox{as}\ t\to \8.
	\end{equation*}

	Now we may summarize what was said above and formulate our main theorem. 
	\begin{thm}\label{main}
		Suppose that \eqref{affine1}-\eqref{A2} are satisfied.
		Let $f\in \bfC (\R ^d)$, $\R ^d = \R ^{d_1}\times ...\times \R ^{d_p}$. Then there are Radon measures $\L _l$ on $\R ^{d_l}\setminus \{ 0\}$ such that
		\begin{equation}\label{main1}
		\lim _{t\to \8} t\E f(\d _{t^{-1}}\bfX)=\sum _{l=1}^p\langle f_l, \L _l\rangle =\langle f, \L \rangle,
		\end{equation}
		where $f_l(x_l) =f (0,...,0,x_l,0,...,0)$. In particular, for every $l$,
		\begin{equation}\label{main3}
		c_l=\lim _{t\to \8}\P (|\bfX ^{(l)}| _{\a }>t)t
		\end{equation}
		exists. Moreover, there is a finite measure $\wt \nu _l$ on $S^{d_l -1}$ such that
		\begin{equation}\label{main2}
		\langle f_l, \L _l\rangle =  \int _0^{\8}\int _{S^{d_l-1}}f_l(\d _s\omega )\frac{ds}{s^2}d\wt \nu _l(\omega),\end{equation}
		$\wt \nu _l(\R ^{d_l})=c_l$, 
		  and
						\begin{equation}\label{main5}
		\lim _{t\to \8}\P \left ( |\bfX |_{\a }>t\right )t=\sum _{l=1}^pc_l=:c_{\8}.
		\end{equation}
		Finally,  if $c_{\8}>0$ then for the measure $\nu $ defined in Theorem \ref{mainth} we have
		\begin{equation}\label{decomposition}
						\nu =c^{-1}_{\8}\sum _{l=1}^p\d ^{(1)}_0\times ...\times \d ^{(l-1)}_0\times \wt  \nu _l\times \d ^{(l+1)}_0\times ...\times \d ^{(p)}_0,\end{equation} 
		where $\d ^{(l)}_0$ is the delta measure concentrated at zero in $\R ^{(l)}$.
	\end{thm}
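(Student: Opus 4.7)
The plan is to combine the two ingredients laid out in the scheme preceding the statement: Theorem \ref{hom} for each homogeneous block equation \eqref{homcase}, and the inter-block bound \eqref{difblocks} from Section \ref{indep}. Let $\pi_l:\R^d\to\R^d$ be the projection keeping only block $l$, so $f_l(x)=f(\pi_l x)$ for $f\in\bfC(\R^d)$, and recall $|\bfX|_\a=\max_l|\bfX^{(l)}|_\a$. Theorem \ref{hom} applied to each \eqref{homcase} already delivers the Radon measure $\L_l$, the spherical measure $\wt\nu_l$ with $\wt\nu_l(S^{d_l-1})=c_l$, the polar identity \eqref{main2}, the block tail \eqref{main3}, and the pointwise convergence $t\,\E[f_l(\d_{t^{-1}}\bfX^{(l)})]\to\langle f_l,\L_l\rangle$ for every $f_l\in\bfC(\R^{d_l})$.

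\emph{Decomposition.} Fix $f\in\bfC(\R^d)$ with $\supp f\subseteq\{|x|_\a\ge r\}$, and choose $\eps\in(0,r)$ and $M>r$. Introduce
\[
E_l^\eps(t)=\{|\bfX^{(l)}|_\a>\eps t\}\cap\bigcap_{q\ne l}\{|\bfX^{(q)}|_\a\le\eps t\},\quad F^\eps(t)=\bigcup_{l\ne q}\{|\bfX^{(l)}|_\a>\eps t,\,|\bfX^{(q)}|_\a>\eps t\}.
\]
On $\{|\bfX|_\a\le\eps t\}$, $\d_{t^{-1}}\bfX\in\{|x|_\a<r\}$, so $f$ vanishes; on $F^\eps(t)$, \eqref{difblocks} gives $t\,\P(F^\eps(t))\to 0$, contributing $o(1)$ to $t\,\E f(\d_{t^{-1}}\bfX)$. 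What remains is the sum of $t\,\E[f(\d_{t^{-1}}\bfX)\mathbf{1}_{E_l^\eps(t)}]$, which I treat block by block.

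\emph{Continuity and truncation.} On $E_l^\eps(t)\cap\{|\bfX^{(l)}|_\a\le Mt\}$ the point $\d_{t^{-1}}\bfX$ lies in the $|\cdot|_\a$-compact ball $\{|x|_\a\le M\}$ and differs from $\pi_l\d_{t^{-1}}\bfX$ by $|\cdot|_\a$-size at most $\eps$; uniform continuity of $f$ on that compact gives $|f(\d_{t^{-1}}\bfX)-f_l(\d_{t^{-1}}\bfX^{(l)})|\le\eta(\eps,M)$ with $\eta(\eps,M)\downarrow 0$ as $\eps\downarrow 0$. This difference vanishes when $|\bfX^{(l)}|_\a\le rt$, so its weighted contribution is at most $\eta(\eps,M)\cdot t\,\P(|\bfX^{(l)}|_\a>rt)=O(\eta(\eps,M))$. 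The tail slice $\{|\bfX^{(l)}|_\a>Mt\}$ contributes at most $\|f\|_\8\,t\,\P(|\bfX^{(l)}|_\a>Mt)\to\|f\|_\8 c_l/M$. Finally, $\{f_l(\d_{t^{-1}}\bfX^{(l)})\ne 0\}\cap E_l^\eps(t)^c\subseteq F^\eps(t)$, so a second application of \eqref{difblocks} yields $t\,\E[f_l(\d_{t^{-1}}\bfX^{(l)})\mathbf{1}_{E_l^\eps(t)}]=t\,\E[f_l(\d_{t^{-1}}\bfX^{(l)})]+o(1)\to\langle f_l,\L_l\rangle$. Taking $t\to\8$, then $\eps\downarrow 0$, then $M\to\8$, produces \eqref{main1}. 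Feeding into \eqref{main1} continuous approximants of $\mathbf{1}_{\{|x|_\a>1\}}$ (a $\L$-continuity set by \eqref{main2}) then delivers \eqref{main5}.

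\emph{Shape of $\nu$ and main obstacle.} For \eqref{decomposition}: on $E_l^\eps(t)\cap\{|\bfX|_\a>t\}$, whose conditional probability is $c_l/c_\8+o(1)$ by \eqref{main3}--\eqref{main5}, we have $|\bfX|_\a=|\bfX^{(l)}|_\a$; the non-block-$l$ coordinates of $\d_{|\bfX|_\a^{-1}}\bfX$ then have $|\cdot|_\a$-size $\le\eps$, while the block-$l$ part converges in law to $\wt\nu_l/c_l$ on $S^{d_l-1}$ by the block result. Letting $\eps\downarrow 0$ places the mass of $\nu$ on $\bigcup_l\wt S^{d_l-1}$ in the convex combination \eqref{decomposition}. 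The most delicate ingredient is the continuity/truncation step: $f$ need only be bounded and supported away from $0$ (not compactly), and $|\cdot|_\a$ is merely quasi-subadditive \eqref{subad}, so the approximation of $f(\d_{t^{-1}}\bfX)$ by $f_l(\d_{t^{-1}}\bfX^{(l)})$ requires the simultaneous $(\eps,M)$ truncation, with the two separate error sources controlled respectively by \eqref{difblocks} and by the marginal tail \eqref{main3}. Once that step is secured, the rest is formal bookkeeping built on Theorem \ref{hom} and \eqref{difblocks}.
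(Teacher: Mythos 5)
Your proposal is correct, and the high-level plan (Theorem \ref{hom} per block plus the cross-block bound \eqref{difblocks}) is of course the same as the paper's, but the execution of the key limit \eqref{main1} is genuinely different. The paper decomposes the \emph{test function}, writing $f=f^{(1)}+\dots+f^{(p)}$ with $\supp f^{(l)}$ forcing the $l$-th block coordinate to be large, first proves the limit for $f\in H^{\zeta}$ — where the error $t\,\E\,|f(\d_{t^{-1}}\bfX)-f_1(\d_{t^{-1}}\bfX^{(1)})|$ on $\{|\bar\bfX|_{\a}\le\eps t\}$ is controlled by the H\"older bound $C_f\eps^{\zeta}\,t\,\P(|\bfX^{(1)}|_{\a}>rt)$ — and then extends to generic $f\in\bfC(\R^d)$ by a sup-norm approximation with the cutoffs $h_{2M}$ and the uniform tail bound \eqref{estimball}. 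You instead decompose the \emph{sample space} into the events $E_l^{\eps}(t)$ and $F^{\eps}(t)$ and replace the H\"older estimate by uniform continuity of $f$ on the compact box $\{|x|_{\a}\le M\}$, which handles all of $\bfC(\R^d)$ in a single pass at the cost of the extra $M$-truncation (whose error $\|f\|_{\8}c_l/M$ you correctly dispose of last, after $t\to\8$ and $\eps\downarrow 0$). What the paper's route buys is that no truncation in $M$ is needed at the level of the main estimate and the H\"older machinery is already in place from Section \ref{homogeneous}; what your route buys is that it bypasses the density argument $H^{\zeta}\subset\bfC$ entirely and only invokes the \emph{conclusion} of Theorem \ref{hom} for bounded continuous $f_l$. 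Your derivations of \eqref{main5} (sandwiching $\Ind{B_1(0)^c}$, using that $S^{d-1}$ is a $\L$-null set by \eqref{main2}) and of \eqref{decomposition} are at the same level of detail as the paper, which at that point simply defers to the argument of Theorem 6.1 in \cite{mentemeier:wintenberger:2021}; both rest on the observation that on $E_l^{\eps}(t)\cap\{|\bfX|_{\a}>t\}$ one has $|\bfX|_{\a}=|\bfX^{(l)}|_{\a}$ and the off-block coordinates of $\d_{|\bfX|_{\a}^{-1}}\bfX$ are uniformly small. The only points to tidy up in a full write-up are boundary conventions ($\supp f\subset\{|x|_{\a}\ge r\}$ versus strict inequalities, fixed by shrinking $r$) and recording that the modulus $\eta(\eps,M)$ taken with respect to $|\cdot|_{\a}$ controls the Euclidean modulus since $|x-y|_{\a}\le\eps$ forces $|x_j-y_j|\le\eps^{1/\a_j}$.
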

		{\bf Remark.} The proof of Theorem \ref{main} is contained in section \ref{conclusion}.
		Observe that $\nu $ is supported by $\bigcup _{1\leq l\leq p}\wt S^{d_l-1}$.
		$\wt \nu _l$ may be zero (if so is $\L _l$) but, if at least for one $l$, $\wt \nu $ is not zero then $\nu $ is well defined.
		
		$c_{\8}>0$ if and only if $\lim _{t\to \8}t\P \left ( |X_j|>t^ {1\slash \a _j}\right )>0$ for at least one $j$. In the case when $\bfX$ and $\bfB$ are independent the latter is equivalent to $\P (A_jx_j+B_j=x_j)<1$ for every $x_j\in \R $, \cite{goldie:1991}, and so, $\L \neq 0$ if and only if $\bfX$ is not concentrated at a point.
		
		In the general case, in Theorem \ref{Rcontra} we give an equivalent condition that may be checked for individual models by using their specific characteristics.

		
	\medskip
		{\bf Remark.} For matrices $\bfA$ with strictly positive entries asymptotics of $\P (|\bfX|>t)$  was studied in \cite{buraczewski:damek:2010} but for a norm that is not equivalent to the one considered here. 
		Although the results of \cite{buraczewski:damek:2010} concern a more general case of diagonal action on nilpotent Lie groups, some of the observations made there have been used in the present paper.  
		 

\subsection{Examples}

\medskip
	{\bf Constant Conditional Correlation GARCH(1,1).} 
	
	\noindent For CCC-GARCH(1,1), the volatility vector satisfies SRE \eqref{iterations} with generic
$$
\bfA =(a_1 Z_1^2+b_1,...,a_d Z_d^2+b_d ),$$
where $a_i>0, b_i\geq 0$ and $Z_1,...,Z_d$ are $N(0,1)$ variables with arbitrary correlations. It is not difficult to see that then $|a_i Z_i^2+b_i|^{\a _i}=|a_j Z_j^2+b_j|^{\a _j}$ a.s. if and only if $\a _i=\a _j$, $a_i=a_j$, $b_i=b_j$ and $Z_i^2=Z_j^2$ a.s. Therefore, for the blocks $\bfX^{(l)}$ in this case, we have
\begin{equation*}
\bfX^{(l)}\stackrel{d}{=}A\bfX^{(l)}+\bfB,
\end{equation*}	
where $A\bfX^{(l)}$ means multiplication of $\bfX ^{(l)}$ by a positive scalar random variable $A$. Components $X_i$, $X_j$ driven by $A_i\neq A_j$ a.s. are asymptotically independent. 

\medskip


\medskip	
	\noindent {\bf BEKK-ARCH model. }
	
	\noindent Due to representation of BEKK-ARCH model described in \cite{pedersen:wintenberger}, the process satisfies SRE \eqref{iterations} with
	\begin{equation*}
	\bfA _n=\sum _{i=1}^lm_{i,n}\bfD _i,
	\end{equation*}
	where $\bfD _1,..., \bfD_l$ are deterministic matrices, the i.i.d process $\{ m_{i,n}\} _{n\in 	\N}$ is independent of $\{ m_{j,n}\} _{n\in \N}$ for $i\neq j$, $ m_{i,n}\sim N(0,1)$. Suppose that $\bfD _1,..., \bfD_l$ are diagonal, $\bfD _i=diag (a_{i,1},...,a_{i,d})$. Then the stationary solution $\bfX$ satisfies \eqref{affine} with 
		\begin{equation*}
		\bfA = diag (\sum _{i=1}^lm_{i}a _{i,1},...,\sum _{i=1}^lm_{i}a _{i,d}), 
		\end{equation*} 
		where $m_1,...,m_d$ are independent $N(0,1)$ variables. Therefore, diagonal entries 
		$A_{j}=\sum _{i=1}^lm_{i}a _{i,j}$ of $\bfA$ are normal variables with variance $\s _j=\sum _{i=1}^la^2 _{i,j}$. Then $|A_k|^{\a _k}= |A_j|^{\a _j}$ a.s. implies that
		$\a _j=\a _k$,
		\begin{equation*}
		|A_j|=|A_k| \ a.s.
		\end{equation*}
		and 
		\begin{equation*}
		\s _j=\sum _{i=1}^la^2 _{i,j}=\sum _{i=1}^la^2 _{i,k}=\s _k.
		\end{equation*}
		Therefore, for the blocks we have $\bfX^{(l)}$ we have
		\begin{equation*}
		\bfX^{(l)}=\bfA ^{(l)}\bfX^{(l)}+\bfB ^{(l)},
		\end{equation*}	
		with $\bfA ^{(l)}=A\ diag (\eps _1,...,\eps _{d_l})$, where $A$ is a positive random variable and $\eps _1,...,\eps _{d_l}$ are random variables with values $\pm 1$  i.e. $\bfA ^{(l)}$ is a similarity of the type considered in \cite{buraczewski:damek:guivarch2009}. If $|A_j|\neq |A_k|$ a.s. then the components $X_j$, $X_k$ of $\bfX$ are asymptotically independent.

\medskip
In both examples there is a further detailed description of measures $\nu _l$, see \cite{buraczewski:damek:guivarch2009} and \cite{mentemeier:wintenberger:2021}.  
\section{Homogeneous case}\label{homogeneous}
	Our aim in this section is to prove Theorem \ref{main} when for all $j$, $|A_j|^{\a _j}=|A_1|^{\a _1}$ a.s. i.e. in the homogeneous case. With the notation of the previous section, we have just one block and, for simplicity, we denote its dimension by $d$, not $d_l$.
	
	\begin{thm}\label{hom}
		Suppose that for every $j$, $|A_j|^{\a _j}=|A_1|^{\a _1}$ a.s. and  \eqref{affine1}- \eqref{A2},  are satisfied. 
		Then the sequence of measures $\lambda_t$ defined in \eqref{lambda} tends in a weak sense to a Radon measure $\Lambda $ on $\R ^d\setminus \{ 0\}$ 
		i.e.\begin{equation}\label{Lambda}
		\lim _{t\to \8} t\E f(\d _{t^{-1}}\bfX)= \langle f, \L \rangle . 
		\end{equation}
		for every $f\in \bfC (\R ^d )$. In particular, 
		\begin{equation*}
		c_{\8 }=\lim _{t\to \8}t\P \left ( |\bfX| _{\a }>t\right ).
		\end{equation*}
		Moreover, 
				\begin{equation}\label{Lambda1}
		\langle f\circ \d _t,\L \rangle = t\langle f ,\L \rangle ,
		\end{equation}
		$\L (\R ^d \setminus B_r(0))<\8$ for every ball $B_r(0)$,
		\begin{equation}\label{Lambda2}
		\L (S_r^{d-1})=0
		\end{equation} 
					and there is a finite measure $\wt \nu $ on $S^{d-1}$ such that
				\begin{equation}\label{Lambda3}
				\langle f, \L \rangle =  \int _0^{\8}\int _{S^{d-1}}f(\d _s\omega )\frac{ds}{s^2}d\wt \nu (\omega),\quad  \wt \nu (S^{d-1})=c_{\8} .
				\end{equation} 
				If $\bfX$ and $\bfB$ are independent then $\s $ in \eqref{A2} may be taken zero i.e. $\E |B_j|^{\a _j}<\8 $, for $j=1,...,d$, is sufficient.
			 
		Suppose additionally that $c_{\8}>0$. The sequence		
				 $m _t$ of probability measures on $S^{d-1}$ defined by
		\begin{equation}\label{Lambda4}
		m _t(U)= \P \left (\d _{|\bfX|^{-1}_{\a }}\bfX\in U \ \ \Big |\ \ |\bfX|_{\a } >t\right ), \quad \mbox{for a Borel}\ U\subset S^{d-1}.
		\end{equation} 
		tends weakly to $ \nu =c^{-1}_{\8}\wt \nu $ i.e. we may write 		
		\begin{equation}\label{Lambda5}
		\langle f, \L \rangle = c_{\8 } \int _0^{\8}\int _{S^{d-1}}f(\d _s\omega )\frac{ds}{s^2}d\nu (\omega),
		\end{equation}
	\end{thm}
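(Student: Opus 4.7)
My plan is to reduce the homogeneous case to Goldie's implicit renewal theorem (Theorem~\ref{Goldie}) applied with the scalar multiplier $A := |A_1|^{\alpha_1}$. By the homogeneity hypothesis $|A_j|^{\alpha_j} = A$ a.s.\ for every $j$, and combined with the commutativity of the dilations $\delta_t$ with $\bfA$, the matrix $\bfA$ acts on the $|\cdot|_\alpha$-scale as a scalar: $|\bfA x|_\alpha = A|x|_\alpha$ for every $x \in \R^d$. By \eqref{A1} and \eqref{posder}, $\E A = 1$ and $\E[A\log A] \in (0,\infty)$, which exactly matches the hypotheses of Goldie's theorem.

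\textbf{The renewal equation.} Fix $f \in \bfC(\R^d)$; a density argument allows me to assume $f$ is Lipschitz and supported in $\{|x|_\alpha \geq r\}$ for some $r > 0$. Set $\psi(t) := t\,\E f(\delta_{t^{-1}}\bfX)$. Using $\bfX \stackrel{d}{=} \bfA\bfX + \bfB$ with $\bfA \perp \bfX$ together with the commutativity,
\[
\psi(t) = t\,\E f(\bfA\,\delta_{t^{-1}}\bfX) + t\,\E\bigl[f(\bfA\delta_{t^{-1}}\bfX + \delta_{t^{-1}}\bfB) - f(\bfA\delta_{t^{-1}}\bfX)\bigr].
\]
For the main term, factor $\bfA = \delta_A \bfA_0$ on $\{A > 0\}$, where $\bfA_0 := \delta_{A^{-1}}\bfA$ preserves $|\cdot|_\alpha$; the substitution $s = t/A$ rewrites it as $\E[A\,\Psi(t/A)]$ for an auxiliary function $\Psi$ that differs from $\psi$ only by the insertion of $\bfA_0$. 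In the log-scale $u = \log t$ this is the renewal convolution of a random walk with increment $-\log A$, whose mean is strictly negative and finite by \eqref{neglog} and \eqref{posder}. The error term is controlled by Lipschitzness of $f$ together with $\E|B_j|^{\alpha_j+\sigma} < \infty$ from \eqref{A2}, and is directly Riemann integrable against $dt/t$.

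\textbf{Main obstacle.} The technical heart is that $A$ and $\bfA_0$ are generally \emph{not} independent, so naively iterating the renewal equation couples the radial tail to a Markov chain on $S^{d-1}$ whose asymptotics must be controlled before Goldie's theorem applies directly to $\psi$ as stated. I would sidestep this by first testing against product-type functions $f(x) = h(|x|_\alpha)\,g(\delta_{|x|_\alpha^{-1}} x)$. The radial tail $c_\infty = \lim_{t\to\infty} t\,\P(|\bfX|_\alpha > t)$ then emerges from the scalar implicit renewal theorem applied to the one-dimensional SRE that $|\bfX|_\alpha$ satisfies modulo a Lipschitz perturbation from $\bfB$; the angular factor---the limiting law of $\delta_{|\bfX|_\alpha^{-1}}\bfX$ conditionally on $|\bfX|_\alpha$ being large---is identified as the desired $\nu$. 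A monotone-class/density step then extends \eqref{Lambda} from product functions to all $f \in \bfC(\R^d)$.

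\textbf{Consequences.} Once \eqref{Lambda} is in hand, the remaining properties follow easily. The scaling \eqref{Lambda1} is obtained by substituting $f \mapsto f \circ \delta_s$ in the limit, exchanging the two dilations, and letting $t \to \infty$. The Radon bound $\L(\R^d \setminus B_r(0)) = r^{-1} c_\infty < \infty$ comes from approximating $\mathbf{1}_{\{|x|_\alpha > r\}}$ by functions in $\bfC(\R^d)$ and invoking \eqref{Lambda1}. Property \eqref{Lambda2} is forced by \eqref{Lambda1}, since $\L(S_r^{d-1})$ must equal $s\,\L(S_{rs}^{d-1})$ for every $s > 0$, hence vanishes. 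The polar disintegration \eqref{Lambda3} is read off from \eqref{Lambda1} via the homeomorphism $\Phi$ of \eqref{polar}: the scaling identifies $\L$ as the push-forward of $(s^{-2}\,ds) \otimes \wt\nu$ under $\Phi$, with $\wt\nu(S^{d-1}) = c_\infty$. Finally, if $c_\infty > 0$, the weak convergence of the $m_t$ in \eqref{Lambda4} to $\nu := c_\infty^{-1}\wt\nu$ follows from \eqref{Lambda} applied to $g(\delta_{|\cdot|_\alpha^{-1}}\cdot)\,\mathbf{1}_{\{|\cdot|_\alpha > 1\}}$ with $g \in C(S^{d-1})$, using a standard regularisation of the cut-off.
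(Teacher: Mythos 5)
Your overall strategy (renewal equation in the radial variable, then density arguments for the consequences) is the right one, and your derivations of \eqref{Lambda1}, the Radon bound, \eqref{Lambda2} (via scaling plus local finiteness of $\L$ on $B_1(0)^c$, which is a clean alternative to the paper's test-function argument with $\phi_\eps$) and \eqref{Lambda3}--\eqref{Lambda5} are essentially correct once \eqref{Lambda} is available. The problem is that your proof of \eqref{Lambda} itself has a genuine gap exactly at the point you flag as the ``main obstacle.'' You correctly observe that the renewal equation does not close on $\psi(t)=t\,\E f(\d_{t^{-1}}\bfX)$ because one step of the recursion inserts the operator $\bfA_0=\d_{A^{-1}}\bfA$. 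But your proposed sidestep --- test product functions $h(|x|_\a)g(\d_{|x|_\a^{-1}}x)$, get $c_\8$ from the scalar Goldie theorem applied to $|\bfX|_\a$, and ``identify'' the limiting conditional angular law as $\nu$ --- is circular: the scalar implicit renewal theorem controls only the radial tail (the case $g\equiv 1$), and the convergence of the conditional law of $\d_{|\bfX|_\a^{-1}}\bfX$ given $|\bfX|_\a>t$ is precisely the assertion to be proved. No mechanism is given for establishing that convergence, and the ``Markov chain on $S^{d-1}$'' you invoke is never analyzed.

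The missing structural observation, which is what makes the homogeneous diagonal case tractable, is that $\bfA_0$ is not a general isometry of $|\cdot|_\a$: since $A^{1/\a_j}=|A_j|$, one has $\bfA_0=\mathrm{diag}(\mathrm{sign}\,A_1,\dots,\mathrm{sign}\,A_d)$, an element of the \emph{finite} group $\Z_2^n$ of sign flips ($n$ being the number of coordinates with $\P(A_i<0)>0$). Hence the renewal equation closes if one enlarges the state space to $D=\R\times\Z_2^n$ and sets $\bar f(u,k)=\E f(\d_{e^{-u}}k\bfX)$; after the exponential tilting by $e^u$ one applies the renewal theorem on $D$ (Theorem \ref{renewalthm}) and evaluates at $k=\mathrm{id}$. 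A second, lesser issue: you cannot simply ``assume $f$ is Lipschitz by density,'' because the density step requires the a priori bound $\sup_t t\,\P(|\bfX|_\a>t)<\8$, which is itself obtained from the renewal argument applied to suitable $H^\zeta$ cut-off functions; moreover the direct Riemann integrability of the perturbation term is nontrivial here precisely because $\bfX$ and $\bfB$ need not be independent --- it requires the H\"older exponent $\zeta$ and the extra moment $\s$ in \eqref{A2} to be matched through a H\"older-inequality estimate together with $\E|X_i|^{\a_i(1-\s^2)}<\8$ from Lemma \ref{Rmoment}.
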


		{\bf Remark.} Under assumption that $\bfX$ and $\bfB$ are independent and		
		$|A_1|=...=|A _d|$, Theorem \ref{hom} was proved in \cite[Theorem 2.8]{buraczewski:damek:guivarch2009}. It turns out that after some modifications, the proof follows in our setting. We just need to replace the euclidean norm, which is used in \cite{buraczewski:damek:guivarch2009}, by the norm $|\cdot | _{\a }$ as it is discussed in \cite[Appendix D]{buraczewski:damek:guivarch2009}. However, the details require a considerable amount of work and so they are presented here. 
		
		As in \cite{buraczewski:damek:guivarch2009} we start with 
		$\E f(\d _{t^{-1}}\bfX)$ 
		instead of using $\P (\d _{t^{-1}}\bfX\in \cdot )$ immediately. Then, some additional regularity (H\"older) of $f$ simplifies the proof and later on it may be gradually relaxed to arrive finally at \eqref{Lambda4}. 
	
	\begin{proof} 
		{\bf Step 1. Renewal equation. }Let $a=|A_1|^{\a _1}.$ Then 
		\begin{equation}\label{norm}
				|\bfA x| _{\a }=a |x| _{\a }= |\d _ax| _{\a }.\end{equation}
		Indeed, $|A_ix_i|^{\a _i}=|A_1|^{\a _1}|x_i|^{\a _i}$. \eqref{norm} means that then $\bfA$ acts by homogeneous dilations. 
		For $ f\in \bfC (\R ^d)$ we define
		\begin{equation*}
		\bar f(u) = \E f(\d _{e^{-u}}\bfX), \quad u\in \R
		\end{equation*}
		Let $\mu $ be a measure on $\R $ defined by 
		$$
		\mu (B)=\P (\log a \in B, a\neq 0)$$
		for a Borel set $B\subset \R $. Let 
		$\bar f * \mu (u):=\int _{\R }\bar f(u-s)d\mu (s)$ and
		\begin{equation}\label{renewal}
		\psi _f(u)=\bar f(u)-\bar f*\mu (u).
		\end{equation}
		Observe that if $A_i\geq 0$ for all $i$ then $a^{1\slash \a _i}=A_i$, $\d _a\bfX=A\bfX$ and 
		\begin{equation*}
		\E f(\d _{e^{u}}\bfX) \ d\mu (u)=\E f(A\bfX)\Ind {\{ a\neq 0\}}=\E f(A\bfX)
		\end{equation*}
		if $0\notin \supp f$. Therefore \eqref{renewal} becomes 
		\begin{equation}\label{renewal0}
		\psi _f(u)=\bar f(u)-\bar f*\mu (u)=\E f(\d _{e^{u}}\bfX)-\E f(\d _{e^{u}}A\bfX).
		\end{equation}
		\eqref{renewal0} is a renewal equation on $\R $ but unfortunately $\mu $ is either a strictly subprobability measure $(\P (A_1=0)>0))$ or, in view of \eqref{neglog}, a probability measure with strictly negative mean. Therefore, we change the measure multiplying both sides of the equation by $e^u$ and we have
		\begin{equation*}
		\wt \psi (u):=e^u\psi _f(u)=e^u\bar f(u)-e^u\bar f*\mu (u)=\wt f(u)-\wt f*\wt \mu (u),
		\end{equation*}
		where $\wt f(u)=e^u\bar f(u)$ and $\wt \mu = e^u\mu $. Now, in view of \eqref{A1} and \eqref{posder}, $\wt \mu $ is a probability measure on $\R $ with the positive mean $\bfm$. Therefore, we may apply the renewal theorem on $\R$, see Theorem \ref{renewalthm} in the Appendix, provided $\wt \psi $ is direct Riemann integrable. 
		
		However, the above argument does not work if $A_i$ may be negative because then $\d _a\bfX=(|A_1|X_1,...,|A_d|X_d)\neq A\bfX$.
		We have to write a renewal equation on the abelian group $D=\R \times \Z _2^n$ (for some $0\leq n\leq d$)  
		\begin{equation*}
		(u,k)(u_1,k_1)=(u+u_1, kk_1), 
		\end{equation*} 
		where $Z_2=\{ 1,-1\}$ with multiplication. $n$ is determined in the following way. Changing the order of coordinates, we may assume that for $1\leq i\leq n$, $\P (A_i <0)>0$, and for $n<i$, $\P (A_i <0)=0$. 
		The action of $k=(k_1,...,k_n)\in \Z _2^n$ on $\bfX$ is defined by $k\bfX=(k_1X_1,...,k_nX_n, X_{n+1},...,X_d)$ and for $g=(u,k)\in D$ we shall write
		\begin{equation*}
		\bar f(g)=\E f(\d _{e^{-u}}k\bfX).
		\end{equation*}
		Let $\mu $ be the measure on $D$ defined by
		\begin{equation*}
		\mu (U\times \{ k \})=\P (\log |A_1|^{\a _1}\in U, A_1\neq 0, \ \mbox{sign} A_i=k_i, i=1,...,n  ),
		\end{equation*}
		where $U$ is a Borel subset of $\R $. 
		(Observe, that $A_1\neq 0$ if and only if $A_i$ for every $i=1,...,d$). 
		Then 
		\begin{equation*}
		\int _D\psi (u,k)\ d\mu (u,k)= \E \psi (|A_1|^{\a _1}, \mbox{sign} A_1,..., \mbox{sign} A_n )
		\end{equation*}
		and 
		\begin{equation*}
		\E f(\d _{e^u}k\bfX)\ d\mu (u,k)=\E f(\bfA\bfX), 
		\end{equation*}
		because $a^{1\slash \a _i}\mbox{sign} A_i=A_i$. Finally writing $f*\mu (g)=\int _D\bar f(g(g')^{-1})\ d\mu (g')$ we obtain 
				\begin{equation}\label{renewal1}
		\psi _f(g)=\bar f(g)-\bar f*\mu (g)=\E f(\d _{e^{-u}}k^{-1}\bfX)-\E f(\d _{e^{-u}}k^{-1}\bfA\bfX)
		\end{equation}
		and the right renewal equation is
		\begin{equation*}
		\wt \psi (g):=e^u\psi _f(g)=e^u\bar f(g)-e^u\bar f*\mu (g)=\wt f(g)-\wt f*\wt \mu (g),
		\end{equation*}
		where $g=(u,k)$, $\wt f(u)=e^u\bar f(g)$ and $\wt \mu = e^u\mu $. As before, $\wt \mu $ is a probability measure on $\R $ with the positive mean $\bfm = \int _Du\ d\wt \mu (u,k)$.
				To apply the renewal theorem \ref{renewalthm} we have to show that
		\begin{equation}\label{renewal2}
		\wt f (g)=\sum _{m=0}^{\8} \wt \psi _f*\wt \mu ^m(g)
		\end{equation} 
		and that $\wt \psi _f$ is direct Riemann integrable ($dRi$) (as defined in the Appendix). 
		 Suppose we have \eqref{renewal2} and direct Riemann integrability. Then 
		\begin{equation*}
		\lim _{u\to \8}\wt f(g)=\frac{1}{\bfm}\int _D\wt \psi _f(u,k)\ dudk,
		\end{equation*}  
		which proves that 
		\begin{equation*}
		\lim _{t\to \8 }t\E f(\d _{t^{-1}}\bfX)\quad \mbox{exists}.\end{equation*}
		Here $du$ is the Lebesgue measure on $\R $ and $dk$ the probability measure equally distributed on elements of $\Z ^n$. 
		
		{\bf Step 2. Smoothing}
		However, although $\wt \psi$ is continuous, it not necessarily $dRi$ as it is. We need to assume more regularity of $f$. The key idea is to use first H\"older functions (see \cite{buraczewski:damek:guivarch2009}). For a $0<\zeta < 1$ let $H^{\zeta}$ be the space of functions $f\in \bfC (\R ^d) $ such that 
		\begin{equation}\label{holzeta}
		|f(x+y)-f(x)|\leq C_f|y|^{\zeta}_{\a}\quad \mbox{for a} \ C_f>0\ \mbox{and for all}\ x,y\in \R ^d.
		\end{equation}
		To define $H^{\zeta}$ we use the norm $|\cdot |_{\a}$ and it is not difficult to see that for a bounded function being in $H^{\zeta}$ is equivalent to be a standard H\"older function with a different exponent.
		For $f\in H^{\zeta }$, $\wt \psi _f$ is direct Riemann integrable and so
		\begin{equation}\label{convHol}
		\lim _{t\to \8 }t\E f(\d _{t^{-1}}\bfX)\ \mbox{exists for}\ f\in H^{\zeta}.\end{equation}
		Let's complete the details. By \eqref{renewal1}  
		\begin{equation*}
		\psi _f*\mu (g)=\bar f*\mu ^m(g)-\bar f*\mu ^{m+1} (g).
		\end{equation*}
		Therefore, 
		\begin{equation}\label{renowallim}
		\sum _{m=0}^N\psi _f*\mu (g)=\bar f (g)-\bar f*\mu ^{N+1} (g).
		\end{equation}
		Moreover, 
		\begin{equation}\label{renowalN}
		\lim _{N\to \8} \bar f*\mu ^{N+1} (g)=0.
		\end{equation}
		If $\P (a=0)\neq 0$ then the mass of $\mu $ is strictly less then one and \eqref{renowalN} holds. If $\P (a>0)=1$ then $\mu $ has a strictly negative mean, $f(0)=0$ and so by the strong law of large numbers
		\begin{equation*}
		\lim _{N\to \8} \bar f*\mu ^{N+1} (u)=\E f(\d _{e^{-t+s_1+...+s_{N+1}}}kk_1...k_{N+1}\bfX)d\mu (s_1,k_1)...d\mu (s_{N+1},k_{N+1})=0
		\end{equation*}
		Letting now $N\to \8$ in \eqref{renowallim}, we obtain
		\begin{equation*}
		\sum _{m=0}^{\8}\psi _f*\mu ^m(g)=\bar f (g)
		\end{equation*}   
		and so
		\begin{equation*}
		\wt f(g)=\sum _{m=0}^{\8}e^u\psi _f*\mu (g)=\sum _{m=0}^{\8}\wt \psi _f*\wt \mu (g).
		\end{equation*}   
		Now we are going to prove that, for $f\in H^{\zeta }$, $\zeta \geq \s$, the function $\wt \psi _f$ is $dRi$.
		
		Since $\wt \psi _f(u)=e^u\psi _f(g)$ is continuous, it is enough to prove that 
		\begin{equation}\label{rieman}
		I=\sum _{n\in \Z }\sup _{g\in \D _n}e^{u}|\psi _f(g)|<\8 ,
		\end{equation}
		where $\D _n=\{ g=(u,k) : n< u \leq n+1\}$.
		Let $\supp f\subset \{ x\in \R ^d: |x|_{\a}\geq \eta \}$. We have 
		\begin{align*}
		\psi _f(g)=&\E f(\d _{e^{-u}}k^{-1}\bfX)-\E f(\d _{e^{-(u-s)}}\left (k(k')^{-1})^{-1}\bfX \right )d\mu (s, k')\\
        &\E f(\d _{e^{-u}}k^{-1}\bfX)-\E f\left (\d _{e^{-u}}k^{-1}\d _{e^{s}}k'\bfX \right )d\mu (s,k')\\
		=&\E f(\d _{e^{-u}}k^{-1}(\bfA\bfX+\bfB))-\E f(\d _{e^{-u}}k^{-1}\bfA\bfX).
		\end{align*}
		Again here we make use of $f(0)=0$ and so a possible atom of $A_1$ at $0$ does not play any role.
		
		Hence for $g\in \Delta _n$, 
		\begin{equation*}
		e^u|\psi _f(g)|\leq e^{n+1}\E \left | f(\d _{e^{-u}}k^{-1}(\bfA\bfX+\bfB))- f(\d _{e^{-u}}k^{-1}\bfA\bfX)\right |\Ind {\{ |\bfA\bfX| _{\a}+|\bfB|_{\a}\geq e^n\eta c^{-1}_{\a }\}}.
		\end{equation*}
		Indeed, if
		\begin{equation}
		|\bfA\bfX| _{\a}+|\bfB|_{\a}<e^n \eta c^{-1}_{\a}
		\end{equation}
		then 
		\begin{equation*}
		|\bfA\bfX|_{\a}, |\bfA\bfX+\bfB|_{\a}\leq c_{\a}\left (|\bfA\bfX| _{\a}+|\bfB|_{\a} \right )\leq e^n\eta 
		\end{equation*}
		and both $f(\d ^{-u}k^{-1}(\bfA\bfX+\bfB)),f(\d ^{-u}k^{-1}\bfA\bfX)=0$. 
			
			Using the H\"older property of $f$, we have
		\begin{equation*}
		\sup _{u\in \Delta _n}   e^u|\psi _f(u)|\leq e^{n+1}e^{-n\zeta}\E  | \bfB| _{\a}^{\zeta }\Ind {\{ |\bfA\bfX| _{\a}+|\bfB|_{\a}\geq e^n\eta c^{-1}_{\a }\}}.
		\end{equation*}
		Let $P_n=\{ |\bfA\bfX| _{\a}+|\bfB|_{\a}\geq e^n\eta c^{-1} _{\a } \}$. Then
		\begin{align*}
		I\leq & e\sum _{n\in \Z}e^{n(1-\zeta)}\E |\bfB|^{\zeta}_{\a}\Ind {P_n}\\
		=&e\E \left (|\bfB|^{\zeta}_{\a}\sum _{n\leq n_0}e^{n(1-\zeta)}\right ),
		\end{align*}
		where $n_0$ is the random variable defined by 
		\begin{equation*}
		n_0=\log c_{\a }-\log \eta +\log \left ( |\bfA\bfX| _{\a}+|\bfB|_{\a}\right ).
		\end{equation*}
		Notice that if $n>n_0$ then $P_n=\emptyset$. Hence
		\begin{align*}
		I&\leq e^{1+(1-\zeta)(\log c-\log \eta)}\left (1-e^{\zeta -1}\right )^{-1}\E \left [ |\bfB|_{\a}^{\zeta}\left ( |\bfA\bfX| _{\a}+|\bfB|_{\a}\right )^{1-\zeta }\right ]\\
		&\leq C\left (\E |\bfB|_{\a}+\E \left [|\bfB|_{\a}^{\zeta} |\bfA\bfX| _{\a}^{1-\zeta }\right ]\right ). 
		\end{align*}
		It amounts to prove that
		\begin{equation*}
		\E \left [|\bfB|_{\a}^{\zeta} |\bfA\bfX| _{\a}^{1-\zeta }\right ]<\8 .
		\end{equation*}
		For, we apply the H\"older inequality with $p=\frac{1-\s ^2}{1-\zeta}$, $q=\frac{1-\s ^2}{\zeta -\s ^2}$, and so
		\begin{equation*}
		\E \left (|\bfB|_{\a}^{\zeta} |\bfA\bfX| _{\a}^{1-\zeta }\right )\leq 
		\left (\E |\bfB|_{\a}^{q\zeta}\right )^{1\slash q}\left (\E |\bfA\bfX| _{\a}^{1-\s ^2}\right )^{1\slash p}<\8 .
		\end{equation*} 
		because $q\zeta \leq 1+\s $ and 
		\begin{equation*}
		\E |\bfA\bfX| _{\a}^{1-\s ^2}= (\E a^{1-\s ^2})(\E |\bfX| _{\a}^{1-\s ^2})<\8 .
		\end{equation*}
		The last expectation is finite, because for every $i$, $\E |X_i|^{\a _i(1-\s ^2)}<\8$ in view of Lemma \ref{Rmoment}.
		
		\medskip
		 If $\bfX$ and $\bfB$ are independent then we proceed differently and we may take any $0<\zeta <1$. By \eqref{hom}, $|\bfA\bfX|_{\a }= |A_1|^{\a _1}|\bfX| _{\a}$ and so we have
		 \begin{equation*}
		 \E \left [|\bfB|_{\a}^{\zeta} |\bfA\bfX| _{\a}^{1-\zeta }\right ]=\E \left [|\bfB|_{\a}^{\zeta}|A_1|^{\a _1(1-\zeta )}\right ] \E |\bfX| _{\a}^{1-\zeta }.
		 \end{equation*}
		Finally, by H\"older inequality with $p=1\slash \zeta $, $q=1\slash (1-\zeta )$ we have
		\begin{equation*}
		\E \left (|\bfB|_{\a}^{\zeta}|A_1|^{\a _1(1-\zeta )}\right )\leq \left (\E |\bfB|_{\a}\right )^{\zeta}\left (\E |A_1|^{\a _1}\right )^{1-\zeta }=\left (\E |\bfB|_{\a}\right )^{\zeta}
		\end{equation*}

		{\bf Step 3. Existence of $\Lambda$.}
		To extend \eqref{Lambda} to $f\in C_c(\R ^d\setminus \{ 0\})$ and then to $f\in \bfC (\R ^d)$ we 
		use functions $h_r\in C^1(\R ^d)$ such that
		\begin{equation*}
		h _r(x)=\begin{cases} 0\quad x \in B_{r\slash 2}(0)\\
		1\quad x \in B_{r}(0)^c\end{cases}
		\end{equation*}
		and $0\leq h_r(x)\leq 1$.
				Then 
		\begin{equation}\label{ghol}
		h_r\in H^{\zeta}
		\end{equation}
		for $\zeta \leq (\max _{1\leq j\leq d}\a _j)^{-1}$, $\zeta <1$.
		To see \eqref{ghol} observe that for $h_r$ and $|y|_{\a}\geq 1$, \eqref{holzeta} is immediate, and for $|y|_{\a}\leq 1$ 
		\begin{equation*}
		| h_r(x+y)- h_r(x)|\leq C_h\| y\| \leq dC |y|_{\a}^{\zeta}.
		\end{equation*}
		Notice, that $\Ind {B_1(0)^c}$ is dominated by $h_1$ and so by \eqref{convHol}
				\begin{equation}\label{ind}
		\sup _{t>0}\P (|\bfX| _{\a}>t  )t\leq \sup _{t>0}t\E h_1(\d _{t^{-1}} \bfX)=C<\8. 
		\end{equation}
		Hence there is $C$ such that for every $r,t>0$
		\begin{equation*}
		\P (|\bfX| _{\a}>rt  )t\leq Cr^{-1}.
		\end{equation*}
		Moreover,
		\begin{equation}\label{estimball}
		 \L \left (B_{2r}(0)^c\right )\leq \langle \L , h_r\rangle = \lim _{t\to \8 }t\E h_r(\d _{t^{-1}} \bfX)\leq 
		 \sup _t t\P \left (|\bfX|_{\a }>tr\right )\leq Cr^{-1}.
		\end{equation}
			Now given $f\in C_c(\R ^d\setminus \{ 0\})$ and $\eps >0$, let $h'\in C_c^{1}(\R ^d\setminus \{ 0\})$ be such that $$\sup _{x\in \R^d}| f(x)-h'(x)| <\eps $$ and $\supp f, \supp h\subset B_r(0)^c$ for some $r>0$. Then
		\begin{equation}
		t\E \left | f(\d _{t^{-1}}\bfX)- h'(\d _{t^{-1}}\bfX) \right |\leq \eps t\P (|\bfX| _{\a}>t r )\leq C\eps r^{-1}
		\end{equation}
		which together with \eqref{convHol} proves that 
		\begin{equation}\label{tlimit}
		\lim _{t\to \8 }t^{-1}\E f(\d _t\bfX)\quad \mbox{exists}
		\end{equation}
		and \eqref{tlimit} implies existence of $\Lambda$.
		For generic $f\in \bfC (\R ^d)$ and large $M>0$, we write
		\begin{equation}\label{2r}
		f= h_{2M}f + \left (1-h_{2M}\right )f.
		\end{equation}
		Then 
		\begin{equation*}
		\lim _{t\to \8}t\E \left ((1-h_{2M})f\right )\left (\d _{t^{-1}}\bfX\right )=\langle \left (1-h_{2M}\right )f, \L \rangle .
		\end{equation*}
		Moreover,
		\begin{equation*}
		\langle h_{2M}f,\L \rangle \leq C\| f\| _{\8 }M^{-1}<\eps
		\end{equation*}
		and 
		\begin{align*}
		t\left |\E \left (h_{2M}f\right )\left (\d _{t^{-1}}\bfX\right )\right |\leq t\| f\| _{\8 }\P (|\bfX|_{\a}>tM)\leq 
		C\| f\| _{\8}M^{-1}<\eps
		\end{align*}
		if $M$ is sufficiently large and hence \eqref{Lambda} follows.
		
		\medskip
		{\bf Step 4. Properties of $\Lambda$.}
		\eqref{Lambda1} follows directly from definition.
		
		\medskip
		To obtain \eqref{Lambda2} for generic $f$ we proceed as in \cite{buraczewski:damek:guivarch2009}. 
		In view of \eqref{Lambda1} it is enough to prove \eqref{Lambda2} for $r=1$. For $0<\eps \leq \frac{1}{4}$, let 
		$\psi _{\eps }$ be the function on $\R $ defined by  
		\begin{equation*}
		\psi _{\eps} (u)= \begin{cases}
		1 \quad \quad \quad \quad \quad \quad \quad \mbox{if}\quad u=1\\
		 0 \quad \quad \quad \quad \quad \quad \quad   \mbox{if}\quad u\leq 1-\eps\ \mbox{or}\  u\geq 1+\eps \\
		u\slash \eps +1-1\slash \eps  \quad \quad \mbox{if}\quad 1-\eps \leq u\leq  1 \\ 
		 -u\slash \eps +1+1\slash \eps  \quad \mbox{if}\quad 1\leq u \leq  1+\eps .
		\end{cases}
		\end{equation*}
		We consider 
		\begin{equation}\label{phieps}
		\phi _{\eps }(x)=\sum _{j=1}^d\psi _{\eps}(|x_j|^{\a _j}).
		\end{equation}
		Then $\phi _{\eps }\in H^{\zeta}$ for $\zeta \leq (\max _{1\leq j\leq d}\a _j)^{-1}$, $\zeta <1$
		(see Appendix) and
		\begin{align*}
		\phi _{\eps }(\d _{t^{-1}}X)\neq 0 \ &\mbox{iff there is }\ j\ \mbox{such that}\ \psi _{\eps }\left (t^{-1}|X_j|^{\a _j}\right )\neq 0\\
		\mbox{i.e. there is }\ &j \ \mbox{such that}\ (1-\eps)t<|X_j|^{\a _j}< (1+\eps )t.
		\end{align*}
		Then
		\begin{equation*}
		t\E \phi _{\eps }(\d _{t^{-1}}\bfX)\leq \sum _{j=1}^dt\P \left ((1-\eps)t< |X_j|^{\a _j}\leq  (1+\eps)t\right ).
		\end{equation*}
		Letting $t\to \8 $, by Theorem \ref{Goldie} we obtain
		\begin{equation}\label{constants}
		\langle \phi _{\eps}, \L \rangle \leq \sum _{j=1}^dc_j\left ((1-\eps)^{-1}-(1+\eps)^{-1}\right ),
		\end{equation}
		where
		$c_j$ is the Goldie constant for $X_j$ and the right hand side of \eqref{constants} tends to $0$ when $\eps \to 0$. On the other hand, $\psi _{\eps}|_{S^{d-1}}\geq 1$, hence $\lim _{\eps \to 0}
		\langle \phi _{\eps}, \L \rangle \geq \L (S^{d-1})$ and so, \eqref{Lambda2} follows. 
		
		\medskip
		 To show \eqref{Lambda3}, for the random variable $\bfX$ we shall write
		$$
		\bfX=\d _{|\bfX|_{\a}}\o _\bfX =\Phi (|\bfX|_{\a}, \o _\bfX), \quad  \o_{\bfX}\in S^{d-1} $$
		in polar coordinates related to the dilations $\d _t$. Let $\phi \in C(S^{d-1})$ and 
		$f(s,\o )=f(\d _s\o)=\phi (\o )\Ind { B_1(0)^c}(\d _s\o)$. Then $\Lambda (\{ x: f \ \mbox{is not continuous at}\ x \}=0$ and by \eqref{ind} we obtain 
		\begin{equation*}
		|t\E f(\d _{t^{-1}}\bfX)|\leq t\E |\phi |(\o _\bfX)\Ind {|\bfX|_{\a }\geq t}\leq C|\phi |_{\8}.
		\end{equation*}
		Therefore, by the Portmanteau Theorem
		\begin{equation}\label{nu}
		\lim _{t\to \8}t\E f(\d _{t^{-1}}\bfX)=\langle \phi,\wt \nu \rangle 
		\end{equation}
		defines a bounded functional on $C(S^{d-1})$ and so a measure $\wt \nu $ on $S^{d-1}$. Moreover, for
		$f_r(\delta _s\o)=\phi (\o )\Ind { B_r(0)^c}(\delta _s\o)$, we have $f_r=f_1\circ \d _{r^{-1}}$,
		\begin{equation*}
		t\E f_r(\d _t^{-1}\bfX)=r^{-1}(rt)\E f_1(\d _{r^{-1}}\d _{t^{-1}}\bfX)
		\end{equation*}
		and so
		\begin{equation*}
		\langle f_r, \L \rangle =\lim _{t\to \8}t\E f_r(\d _t^{-1}\bfX)= r^{-1}\lim _{t\to \8}t\E f_1(\d _{t^{-1}}\bfX)=r^{-1}\langle \phi , \wt \nu \rangle .
		\end{equation*}
		Therefore,  
		\begin{equation*}
		\langle f_r, \L \rangle =r^{-1}\langle \phi , \wt \nu \rangle = \int _r^{\8}\frac{ds}{s^2}\langle \phi , \wt \nu \rangle = \int _{\R ^d}f_r(\d _s\o )\frac{ds }{s^2}d \wt \nu (\o) , 
		\end{equation*}
		which implies \eqref{Lambda3}. In particular, for $f=\Ind {B_1(0)^c}$ 
		\begin{equation*}
		t\E \Ind {B_1(0)^c}(\d _t^{-1}\bfX)=t\P (|\bfX| _{\a}>t)\to \wt \nu (S^{d-1})=c_{\8}.  
		\end{equation*}
		Finally, suppose that $c_{\8}>0$ and observe that for $\phi \in C(S^{d-1})$
	\begin{equation*}
	\langle \phi, m _t\rangle =\frac{\E [\phi (\o _\bfX)\Ind {\{ |\bfX|_{\a }>1\}}]}{\P (|\bfX|_{\a}>t)}\to \frac{\langle \phi,\wt \nu \rangle }{c_{\8}}.
	\end{equation*}
	\end{proof}
	\section {Asymptotic independence}\label{indep}
	In this section we shall prove that, if $|A_i|^{\a _i}\neq |A_j|^{\a _j}$ a.s then $X_i$ and $X_j$ are asymptotically independent i.e. 
	$$\P \left (|X_i|>t^{1\slash \a _i}, |X_j|>t^{1\slash \a _j}\right )=o(t^{-1}).$$
	
	\begin{thm}\label{asym1}
		Suppose that \eqref{affine1}-\eqref{A2} are satisfied and $\P \left (|A_i|^{\a _i}\neq |A_j|^{\a _j}\right )>0$. 
				Then, for every $r_1, r_2>0$ there are $C,\beta >0$ such that
		\begin{equation}\label{inddecay}
		\P \left (|X_i|>r_1t^{1\slash \a _i}, |X_j|>r_2t^{1\slash \a _j}\right )\leq Ct^{-1}(1+\log t)^{-\beta}\quad \mbox{for} \ t>1.
		\end{equation}
		In particular, for different blocks $\bfX^{(l)}, \bfX^{(q)}$ we have
		\begin{equation*}
		\P \left (|\bfX^{(l)}|_{\a }>r_1t, |\bfX^{(q)}|_{\a}>r_2t\right )\leq Ct^{-1}(1+\log t)^{-\beta}\quad \mbox{for} \ t>1.
		\end{equation*}
	\end{thm}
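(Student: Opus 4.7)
The plan is to iterate the equation and exploit a strict Hölder contraction on the bivariate multiplicative structure. Using only $\bfA \perp \bfX$ and \eqref{affine1}, we construct on an enlarged probability space an iid sequence $(\bfA^{(k)}, \bfB^{(k)})_{k \geq 1}$ distributed as $(\bfA, \bfB)$ together with $\bfX^{(n)} \stackrel{d}{=} \bfX$, $\bfX^{(n)}$ independent of $(\bfA^{(k)})_{k \leq n}$, so that
\begin{equation*}
X_i \stackrel{d}{=} \Pi_i^{(n)} X_i^{(n)} + R_i^{(n)}, \qquad \Pi_i^{(n)} := \prod_{k=1}^n A_i^{(k)}, \quad R_i^{(n)} := \sum_{k=1}^n \Pi_i^{(k-1)} B_i^{(k)},
\end{equation*}
and analogously for the $j$-th coordinate; in particular $(\Pi_i^{(n)}, \Pi_j^{(n)})$ is built from iid copies of $(A_i, A_j)$ and is independent of $(X_i^{(n)}, X_j^{(n)}) \stackrel{d}{=} (X_i, X_j)$.

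Splitting the target event according to whether the main term $\Pi_{\cdot}^{(n)} X_{\cdot}^{(n)}$ or the remainder $R_{\cdot}^{(n)}$ dominates on each coordinate, the principal contribution is bounded by conditioning on $(\Pi_i^{(n)}, \Pi_j^{(n)})$ and invoking the marginal tail $\P(|X_k| > u) \leq C u^{-\alpha_k}$ from Theorem \ref{Goldie1}:
\begin{equation*}
\P\bigl(|\Pi_i^{(n)} X_i^{(n)}| > c_1 t^{1/\alpha_i},\ |\Pi_j^{(n)} X_j^{(n)}| > c_2 t^{1/\alpha_j}\bigr) \leq \frac{C}{t}\,\E\bigl[\min(|\Pi_i^{(n)}|^{\alpha_i}, |\Pi_j^{(n)}|^{\alpha_j})\bigr] \leq \frac{C \gamma(s)^n}{t},
\end{equation*}
where $\gamma(s) := \E[|A_i|^{\alpha_i s}|A_j|^{\alpha_j (1-s)}]$, using $\min(u, v) \leq u^s v^{1-s}$ for $s \in (0,1)$ together with the iid factorization. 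By Hölder's inequality combined with $\E|A_i|^{\alpha_i} = \E|A_j|^{\alpha_j} = 1$, one has $\gamma(s) \leq 1$, and equality would force $|A_i|^{\alpha_i}$ and $|A_j|^{\alpha_j}$ to be proportional, hence equal, a.s., contradicting the hypothesis. Therefore $\gamma(s) < 1$ for every $s \in (0,1)$ and the principal term decays geometrically in $n$.

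The mixed cases, e.g.\ $\{|R_i^{(n)}| > c_1 t^{1/\alpha_i},\ |\Pi_j^{(n)} X_j^{(n)}| > c_2 t^{1/\alpha_j}\}$, are treated by combining the pointwise bound $|R_i^{(n)}| \leq |X_i| + |\Pi_i^{(n)} X_i^{(n)}|$ with the marginal Goldie tails and a parallel Hölder-type contraction that exploits the shared $\bfA$-noise between $R_i^{(n)}$ and $\Pi_j^{(n)}$. Optimising $n = n(t)$, at order a small multiple of $\log t$, balances the geometric gain $\gamma(s)^n$ against the remainder contributions and produces the claimed rate $C t^{-1}(1+\log t)^{-\beta}$. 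The block version then follows by a union bound over $(i,j) \in I_l \times I_q$, since $|\bfX^{(l)}|_{\a} > r_1 t$ forces $|X_i|^{\alpha_i} > r_1 t$ for some $i \in I_l$. The principal obstacle is precisely the remainder control: each $R_i^{(n)}$ inherits the tail index $\alpha_i$ of $X_i$, so crude estimates give only the uninteresting rate $O(t^{-1})$; extracting the sub-polynomial improvement requires recovering a Hölder gain $\gamma(s)^n$ on these mixed events as well, via the joint dependence of $R_i^{(n)}$ and $\Pi_j^{(n)}$ through the common $\bfA$-sequence.
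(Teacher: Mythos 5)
Your proposal is not complete: the step you yourself flag as ``the principal obstacle'' — controlling the mixed and remainder events — is exactly where the whole difficulty of the theorem sits, and you give no mechanism for it. Concretely, after the split into main/remainder on each coordinate you have to handle, e.g., $\P\bigl(|R_i^{(n)}|>ct^{1/\a_i},\,|R_j^{(n)}|>ct^{1/\a_j}\bigr)$. Each marginal bound here is of order $n^{\max(1,\a)}t^{-1}$ (by a moment estimate on the $n$-term sum $R^{(n)}$), so with your choice $n\asymp\log t$ the double-remainder term alone is \emph{worse} than $t^{-1}$ unless one extracts a genuine joint gain; and for that term there is no surviving product $\Pi_j^{(n)}$ whose ``shared noise'' one could exploit, so the gesture toward ``a parallel H\"older-type contraction'' has no content there. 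The same circularity appears in the mixed terms once you replace $|R_i^{(n)}|$ by $|X_i|+|\Pi_i^{(n)}X_i^{(n)}|$: the variable $X_i$ is not independent of $\Pi_j^{(n)}$, so you are back to a joint tail of the same type you set out to bound. A secondary but real problem is the construction itself: the paper explicitly allows $\bfB$ to depend on $\bfX$ and assumes no iteration behind \eqref{affine1}; in that setting one can arrange $(\bfA^{(k)})_k$ i.i.d.\ and independent of $\bfX^{(n)}$ by disintegration, but your claim that the pairs $(\bfA^{(k)},\bfB^{(k)})$ are i.i.d.\ is unjustified (each $(\bfA^{(k)},\bfB^{(k)})$ must be drawn conditionally on the value of the next level), and the law of $R^{(n)}$ then depends on this joint structure.

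What you do have right is the contraction: your $\gamma(s)=\E|A_i|^{\a_i s}|A_j|^{\a_j(1-s)}<1$ by strict H\"older under $\P(|A_i|^{\a_i}\neq|A_j|^{\a_j})>0$ is precisely the paper's key lemma (that $\wt\eta_\xi=\chi_\xi\eta$ is a strictly subprobability measure on $G=(\R\setminus\{0\})^2$). The paper, however, deploys it inside a renewal-type identity rather than a pathwise iteration: with $h(g)=\E f(g^{-1}X)$ for a H\"older $f$ supported off the axes, a \emph{single} application of \eqref{affine1} gives $\psi(g)=h(g)-h*\eta(g)=\E f(g^{-1}(AX+B))-\E f(g^{-1}AX)$, which is bounded by $C\tau(g)^{-\gamma}\chi_\xi(g)^{-1}$ using only the H\"older regularity of $f$ and the moment condition \eqref{A2}; the purely analytic telescoping $\wt h=\wt\psi*U_\xi$ with $U_\xi=\sum_m\wt\eta_\xi^{\,m}$ finite (this is where $\gamma(s)<1$ enters) then transfers that decay to $\wt h$ and hence to the joint tail. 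This avoids ever estimating an $n$-fold remainder. To salvage your route you would need to supply the missing joint estimates on $(R_i^{(n)},R_j^{(n)})$ and $(R_i^{(n)},\Pi_j^{(n)}X_j^{(n)})$, which I do not see how to do without effectively reintroducing the convolution identity above.
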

As usual, if $\bfX$ and $\bfB$ are independent then $\s $ in \eqref{A2} may be taken $0$ but then the conclusion is weaker.
\begin{thm}\label{asym3}
	Suppose that $\bfX$ and $B$ are independent, \eqref{A0}, \eqref{A1} are satisfied, 
	$$	\P \left (|A_i|^{\a _i}\neq |A_j|^{\a _j}\right )>0$$ and
	\begin{align*} 
	\E |A_i|^{\a _i}\log ^+|A_i|<\8 ,&\quad \E |B_i|^{\a _i}<\8, \\
	 \E |A_j|^{\a _j}\log ^+|A_j|<\8 ,&\quad \E |B_j|^{\a _j}<\8.
	\end{align*}
	Then, for every $r_1, r_2>0$,
	\begin{equation}
	\lim _{t\to \8}t\P \left (|X_i|>r_1t^{1\slash \a _i}, |X_j|>r_2t^{1\slash \a _j}\right )=0
	\end{equation}
	In particular, for different blocks $\bfX^{(l)}, \bfX^{(q)}$ we have
	\begin{equation*}
\lim _{t\to \8}	t\P \left (\bfX^{(l)}|_{\a }>r_1t, |\bfX^{(q)}| _{\a }>r_2t\right )=0.
	\end{equation*}
\end{thm}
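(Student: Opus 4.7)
I will iterate $\bfX\stackrel{d}{=}\bfA\bfX+\bfB$ using i.i.d.\ copies $(\bfA^{(k)},\bfB^{(k)})_{k=1}^n$ of $(\bfA,\bfB)$ jointly independent of a copy $\bfX^{(n)}\stackrel{d}{=}\bfX$, to obtain
\[
X_i\stackrel{d}{=}\Pi_{i,n}X_i^{(n)}+R_{i,n},\qquad X_j\stackrel{d}{=}\Pi_{j,n}X_j^{(n)}+R_{j,n},
\]
coupled through these copies, with $\Pi_{i,n}=\prod_{k=1}^n A_i^{(k)}$, $R_{i,n}=\sum_{k=1}^n A_i^{(1)}\cdots A_i^{(k-1)}B_i^{(k)}$, and $(X_i^{(n)},X_j^{(n)})$ independent of $\F_n:=\sigma(\bfA^{(k)},\bfB^{(k)}:k\le n)$. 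For $s_i=r_1t^{1/\a_i}$, $s_j=r_2t^{1/\a_j}$, the union bound gives
\[
\P(|X_i|>s_i,|X_j|>s_j)\le \P\bigl(|\Pi_{i,n}X_i^{(n)}|>\tfrac{s_i}{2},|\Pi_{j,n}X_j^{(n)}|>\tfrac{s_j}{2}\bigr)+\P\bigl(|R_{i,n}|>\tfrac{s_i}{2}\bigr)+\P\bigl(|R_{j,n}|>\tfrac{s_j}{2}\bigr).
\]

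For each fixed $n$, the two remainder terms are $o(t^{-1})$: since $B_i^{(k)}$ is independent of $\Pi_{i,k-1}$ and $\E|A_i|^{\a_i}=1$, $\E|B_i|^{\a_i}<\infty$, the i.i.d.\ structure yields $\E|R_{i,n}|^{\a_i}<\infty$, and dominated convergence then forces $s^{\a_i}\P(|R_{i,n}|>s)\to 0$ as $s\to\infty$. For the main term, conditioning on $\F_n$ and using $(X_i^{(n)},X_j^{(n)})\perp\F_n$ together with the trivial joint bound $\P(\cdot,\cdot)\le\min(\P(\cdot),\P(\cdot))$ and the uniform Goldie estimate $\P(|X_i|>u)\le K_i u^{-\a_i}$ (Theorem \ref{Goldie1}) yields
\[
\P\bigl(|\Pi_{i,n}X_i^{(n)}|>\tfrac{s_i}{2},|\Pi_{j,n}X_j^{(n)}|>\tfrac{s_j}{2}\bigm|\F_n\bigr)\le C\,t^{-1}\min\bigl(|\Pi_{i,n}|^{\a_i},|\Pi_{j,n}|^{\a_j}\bigr),
\]
with $C$ depending only on $r_1,r_2,\a_i,\a_j,K_i,K_j$. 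Taking expectations reduces the task to showing $\phi_n:=\E\bigl[\min(|\Pi_{i,n}|^{\a_i},|\Pi_{j,n}|^{\a_j})\bigr]\to 0$.

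For $\phi_n\to 0$ the plan is a Cram\'er change of measure: set $d\tilde\P_i/d\P|_{\F_n}=|\Pi_{i,n}|^{\a_i}$ and $d\tilde\P_j/d\P|_{\F_n}=|\Pi_{j,n}|^{\a_j}$, which are probability measures since $\E|A_i|^{\a_i}=\E|A_j|^{\a_j}=1$. A direct rewrite gives
\[
\phi_n=\tilde\P_i(Y_n\ge 0)+\tilde\P_j(Y_n<0),\qquad Y_n=\sum_{k=1}^n U_k,\qquad U_k=\a_j\log|A_j^{(k)}|-\a_i\log|A_i^{(k)}|.
\]
Under $\tilde\P_i$ the $U_k$ are i.i.d.\ with $\E_{\tilde\P_i}e^{U_1}=\E[|A_j|^{\a_j}\mathbf{1}_{A_i\neq 0}]\le 1$. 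Jensen's inequality together with the hypothesis $\P(|A_i|^{\a_i}\neq|A_j|^{\a_j})>0$ forces $\E_{\tilde\P_i}U_1<0$ (possibly $-\infty$): if $\E_{\tilde\P_i}e^{U_1}<1$ then $\E_{\tilde\P_i}U_1\le\log\E_{\tilde\P_i}e^{U_1}<0$, while if $\E_{\tilde\P_i}e^{U_1}=1$ then $U_1$ cannot be $\tilde\P_i$-a.s.\ constant (otherwise $|A_i|^{\a_i}=|A_j|^{\a_j}$ a.s., contradicting the hypothesis) and strict Jensen gives the same conclusion. The elementary bound $U_1^+\le e^{U_1}$ yields $\E_{\tilde\P_i}U_1^+\le 1$, so the generalized SLLN gives $Y_n\to-\infty$ $\tilde\P_i$-a.s., and $\tilde\P_i(Y_n\ge 0)\to 0$; by symmetry $\tilde\P_j(Y_n<0)\to 0$, so $\phi_n\to 0$.

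Combining, $\limsup_{t\to\infty}t\P(|X_i|>s_i,|X_j|>s_j)\le C\phi_n$ for every $n$, which proves the marginal statement. The block version follows by a union bound over pairs $(i,j)\in I_l\times I_q$, each satisfying $i\not\sim j$ and hence $\P(|A_i|^{\a_i}\neq|A_j|^{\a_j})>0$ by definition of the equivalence classes. The main obstacle is the step $\phi_n\to 0$: both $|\Pi_{i,n}|^{\a_i}$ and $|\Pi_{j,n}|^{\a_j}$ are mean-one martingales converging a.s.\ to zero but failing to be uniformly integrable, so a direct dominated-convergence argument cannot succeed. The Cram\'er tilt reformulates the question as a SLLN whose strictly negative drift is produced exactly by the non-equivalence hypothesis, while the bound $U^+\le e^U$ bypasses the need for any moment of $A_j$ above the critical exponent -- essential here because the theorem assumes only $\E|A|^\a=1$.
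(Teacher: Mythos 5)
Your argument is correct, but it is genuinely different from the one in the paper. The paper never iterates the equation: it writes an implicit renewal identity $\psi(g)=h(g)-h*\eta(g)$ on the multiplicative group $(\R\setminus\{0\})^2$, tilts by $\chi_{\xi}(g)=|g_1|^{\a_1\xi}|g_2|^{\a_2(1-\xi)}$ so that $\wt\eta_{\xi}=\chi_{\xi}\eta$ becomes a \emph{strict} subprobability measure (strictness coming from H\"older/Jensen and exactly the hypothesis $\P(|A_i|^{\a_i}\neq|A_j|^{\a_j})>0$), and then verifies a direct-Riemann-integrability--type summability of $\chi_{\xi}\psi$ over the lattice of boxes $\Delta_{n,m}$ to conclude $\wt h=\wt\psi*U_{\xi}\to 0$ by dominated convergence against the finite potential $U_{\xi}$. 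You instead unroll the backward iteration, invoke the one-dimensional Goldie bound $\P(|X_i|>u)\le K_iu^{-\a_i}$ for the marginals, and reduce everything to $\E\bigl[\min(|\Pi_{i,n}|^{\a_i},|\Pi_{j,n}|^{\a_j})\bigr]\to 0$, which you settle by a Cram\'er tilt and the SLLN; the negative drift of $\a_j\log|A_j|-\a_i\log|A_i|$ under the tilted law is the probabilistic incarnation of the paper's strict subprobability bound, so the two proofs exploit the same non-degeneracy. Your route is more elementary (no dRi verification, no renewal potential), and your handling of the degenerate cases ($A_i=0$, constant $U_1$) and of the well-definedness of $\E_{\tilde\P_i}U_1$ via $U_1^+\le e^{U_1}$ is careful and complete. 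What it costs: (i) the backward iteration requires $\bfX$ to be jointly independent of $(\bfA,\bfB)$ -- this is how the paper reads its hypothesis elsewhere (its own factorizations in this subsection use the same joint independence), but it is worth flagging, since the renewal-equation method needs only the distributional identity and $\bfX\perp\bfA$, which is why it also covers the dependent-$\bfB$ case of Theorem \ref{asym1}; (ii) your argument produces no rate, whereas the paper's tilted-potential machinery upgrades directly to the logarithmic and submultiplicative decay rates of Theorems \ref{asym1} and \ref{asym2}.
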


With extra moment assumptions on $A_1, A_2$, we may improve the decay in \eqref{inddecay}. For that it is convenient to measure the growth of $A_1, A_2$ by submultiplicative functions. 

	\subsection{Submultiplicative functions}
	Let $G$ be a semi-group. A function $\tau :G\to [0,\8 )$ is called submultiplicative if
	\begin{equation*}
	\tau (gg')\leq \tau (g) \tau (g'), \quad \mbox{for}\ g,g'\in G.
	\end{equation*} 
	We are going to consider such functions on the multiplicative semi-groups $\R $, $\R^2$
	 and we shall always assume that $\tau $ is bounded
	on bounded sets i.e. $\tau $ is {\it locally bounded}. Typical examples on $\R $ are 
	\begin{equation*}
	\tau (g)=|g|^{\beta}, \ \beta >0, \quad \tau (g)=1+\log (1+|g|)
	\end{equation*}
	or
	\begin{equation*}
	\tau (g)=(1+\log (1+|g|))^{\beta}, \quad \tau (g)=|g|^{\beta _1} (1+\log (1+|g|))^{\beta _2}, \ \beta , \beta _1, \beta _2 >0.
	\end{equation*}
	More generally, if a function $\tau ':G \to [0,\8 )$ is subadditive i.e
	\begin{equation*}
	\tau '(gg')\leq \tau '(g) + \tau '(g')
	\end{equation*}
	then $\tau (g)=1+\tau '(g)$ is submultiplicative. Hence
	\begin{equation*}
	\tau (g)=1+\log (1+\log (1+|g|))
	\end{equation*}
	is submultiplicative and we may iterate like that. If $\tau _i$ are submultiplicative functions on $\R $ 
	then 
	\begin{equation*}
	\tau (g)=\tau (g_1,g_2)=\tau _1(g_1)\tau _2(g_2), \quad \mbox{for}\ g=(g_1,g_2)\in \R ^2
	\end{equation*} 
	is a submultiplicative function on $\R ^2$ considered with multiplication $$ (g_1,g_2)(g'_1,g'_2)=(g_1g'_1,g_2g'_2). $$ 
	But, of course there are other examples like
	\begin{equation*}
	\tau (g_1,g_2)=1+\log (1+|g_1g_2|)
	\end{equation*}
	or, more generally,
	\begin{equation*}
	\tau (g_1,g_2)=1+\tau _1 (|g_1g_2|).
	\end{equation*}
	Numerous other examples may be thought of as well.
	\begin{lem}
		Let $\tau :\R \to [0,\8 ) $ be a submultiplicative function. Then there are $C_1, C_2>0$   such that 
		\begin{equation}\label{tauestim}
		\tau (g)\leq C_1(1+|g|)^{C_2}.
		\end{equation}
	\end{lem}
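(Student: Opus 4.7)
The plan is to exploit submultiplicativity by writing $|g|$ as a product of many small factors and then controlling $\tau $ on each factor using local boundedness. First I would set $M:=\sup _{|s|\leq 2}\tau (s)$, which is finite by the local boundedness hypothesis, and note that in particular $\tau (\pm 1), \tau (0)\leq M$.

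Next, for $g\in \R $ with $|g|\geq 1$ I would write $|g|=2^n r$ with $n=\lfloor \log _2|g|\rfloor $ and $r\in [1,2)$, and decompose $g$ as a product of $\mathrm{sign}(g)$, $n$ copies of $2$, and the factor $r$. Applying submultiplicativity $n+2$ times and bounding each of $\tau (\mathrm{sign}(g))$ and $\tau (r)$ by $M$ yields an inequality of the form $\tau (g)\leq M^2\tau (2)^n$. Since $n\leq \log _2|g|$, taking $C_2:=\max \{1,\log _2\tau (2)\}$ (which is positive regardless of whether $\tau (2)$ exceeds $1$) gives $\tau (2)^n\leq |g|^{C_2}\leq (1+|g|)^{C_2}$. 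The remaining case $|g|\leq 1$ is immediate from local boundedness, since then $\tau (g)\leq M\leq M(1+|g|)^{C_2}$. Combining the two regimes, an appropriate choice such as $C_1:=\max \{M,M^2\}$ yields the desired estimate $\tau (g)\leq C_1(1+|g|)^{C_2}$.

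I do not expect a serious obstacle; the proof is essentially the classical Fekete-style argument. The only subtleties to watch out for are that the multiplicative semigroup $\R $ contains negative numbers and $0$, so one must absorb the sign factor $\tau (\mathrm{sign}(g))$ into the constant (handled by the factor $M$ above) and treat $g=0$ inside the $|g|\leq 1$ branch, and that $\tau (2)$ could be smaller than $1$, in which case the naive exponent $\log _2\tau (2)$ is non-positive — this is why one defines $C_2$ as the maximum with $1$ to ensure $C_2>0$ as required.
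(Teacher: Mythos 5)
Your proof is correct and follows essentially the same route as the paper's: decompose $|g|$ as a power of a fixed base times a remainder in a bounded interval, apply submultiplicativity factor by factor, and absorb the sign and the remainder into the constant via local boundedness (the paper uses base $e$ where you use base $2$, which is immaterial). Your explicit handling of the case $\tau(2)<1$ by taking $C_2=\max\{1,\log_2\tau(2)\}$ is a sensible precaution that the paper glosses over.
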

\begin{proof}
	Let $g>e$ and $C=\max(\sup _{1\leq w\leq e}\tau (w), 1)$. Then 
	\begin{align*}
	\tau (g) =&\tau \left (e^{\lfloor \log g\rfloor}g^{\log g - \lfloor \log g\rfloor}\right )\\
	\leq & \tau \left (e^{\lfloor \log g\rfloor}\right )\sup _{1\leq w\leq e}\tau (w)\\
	\leq &C\tau (e)^{\lfloor \log g\rfloor}\leq C g^{\log C}.
	\end{align*}
	Moreover, $\tau (-|g|)\leq \tau (-1)\tau (|g|)$, $\tau $ is bounded on $[-e,e]$ and so \eqref{tauestim} follows.
\end{proof}
\subsection{Improvement of Theorem \ref{asym1}}
\hskip 15 pt Let $\tau $ be a submultiplicative function on $\R ^2$. We shall write $\tau (g)=\tau (g_1, g_2)$ and \newline $\tau (g_1):=\tau (g_1,1)$, $\tau (g_2):=\tau (1, g_2)$ identifying $g_1, g_2$ with $(g_1,1)$ and $(1,g_2)$ respectively. Moreover, we shall assume that $\tau (g_1,g_2)>0$ if $(g_1,g_2)\in (\R \setminus \{ 0\})^2$.
Now we may generalize Theorem \ref{asym1}. Without loss of generality we may consider $X_1$ and $X_2$ instead of $X_i$, $X_j$.
\begin{thm}\label{asym2}
	Suppose that \eqref{affine1}-\eqref{A2} are satisfied, $\P \left (|A_1|^{\a _1}\neq |A_2|^{\a _2}\right )>0$ 
	and that there is a locally bounded submultiplicative function $\tau $ on $G=\R ^2$ 
	such that
	\begin{equation}\label{condtau}
	\E |A_1|^{\a _1} \tau (A_1)< \8, \quad \E |A_2|^{\a _2} \tau (A_2)< \8
	\end{equation}
	and 
		$$\tau (g_1,g_2)>0\quad  \mbox{if}\quad  g_1, g_2\neq 0.$$
			Then, for every $r_1, r_2>0$ there are $\beta ,C>0$ such that
	\begin{equation}\label{extdecay}
	\P \left (|X_1|>r_1t^{1\slash \a _1}, |X_2|>r_2t^{1\slash \a _2}\right )\leq Ct^{-1}
	\tau (t ^{1\slash \a _1},t ^{1\slash \a _2})^{-\beta}.
	\end{equation}
\end{thm}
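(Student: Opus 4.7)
The plan is to adapt the iterative argument underlying the proof of Theorem \ref{asym1}, tracking the submultiplicative weight $\tau$ through the recursion and balancing the number of iterations against the growth of $\tau(s_1,s_2)$. Write $F(s_1,s_2) := \P(|X_1|>s_1,\, |X_2|>s_2)$ and set $s_i = r_i t^{1/\alpha_i}$. Using the coordinatewise distributional identity $X_i \stackrel{d}{=} A_iX_i+B_i$, the independence of $\bfA$ and $\bfX$, and the triangle inequality $|A_iX_i+B_i|\le|A_i||X_i|+|B_i|$, one obtains the one-step recursion
\[
F(s_1,s_2) \le \E\bigl[F\bigl(s_1/(c|A_1|),\,s_2/(c|A_2|)\bigr)\bigr] + C\,\P(|B_1|>s_1/c) + C\,\P(|B_2|>s_2/c)
\]
for any fixed $c>1$. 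Iterating with independent copies $(\bfA^{(j)},\bfB^{(j)})$ of $(\bfA,\bfB)$ produces, after $n$ applications,
\[
F(s_1,s_2) \le \E\bigl[F\bigl(s_1/(c^n Y_n),\,s_2/(c^n Z_n)\bigr)\bigr] + \sum_{k=1}^n R_k,
\]
where $Y_n = \prod_{j=1}^n |A_1^{(j)}|$, $Z_n = \prod_{j=1}^n |A_2^{(j)}|$, and each $R_k$ is a $B$-tail rescaled by the partial product of the previous $|A_i^{(j)}|$'s.

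The main term is bounded using the marginal Goldie estimate $F(u_1,u_2)\le C\min(u_1^{-\alpha_1},u_2^{-\alpha_2})$ from Theorem \ref{Goldie1} together with the elementary inequality $\min(a,b)\le\sqrt{ab}$:
\[
\E\bigl[F\bigl(s_1/(c^n Y_n),\,s_2/(c^n Z_n)\bigr)\bigr] \le C\,c^{n(\alpha_1+\alpha_2)/2}\,\rho^n\,t^{-1},\qquad \rho := \E\sqrt{|A_1|^{\alpha_1}|A_2|^{\alpha_2}}.
\]
Since $\E|A_1|^{\alpha_1}=\E|A_2|^{\alpha_2}=1$, the Cauchy--Schwarz inequality gives $\rho\le 1$; the hypothesis $\P(|A_1|^{\alpha_1}\ne|A_2|^{\alpha_2})>0$ rules out the equality case, so $\rho<1$ strictly. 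Picking $c>1$ small enough that $\rho_0 := c^{(\alpha_1+\alpha_2)/2}\rho<1$ produces geometric decay $\rho_0^n$ of the main term.

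Choose $n=n(t)$ of order $\log\tau(s_1,s_2)$, adjusted so that $\rho_0^n \le \tau(s_1,s_2)^{-\beta}$. The main term is then $\le C t^{-1}\tau(s_1,s_2)^{-\beta}$, and the task is to control $\sum_{k=1}^n R_k$ at the same order. Each $R_k$ has the form $\E\bigl[\P(|B_i^{(k)}|>s_i c^{-k}/Y_{k-1}^{(i)})\bigr]$; split the inner expectation on $\{Y_{k-1}^{(i)}\le s_i c^{-k}\}$ and its complement. On the favourable event, the $\alpha_i{+}\sigma$-moment of $B_i$ from \eqref{A2} combined with $\E(Y_{k-1}^{(i)})^{\alpha_i}=1$ yields a contribution of order $t^{-1}$; on the complement, a Chernoff-type bound $\P(Y_{k-1}^{(i)}>u)\le u^{-\alpha_i\lambda}(\E|A_i|^{\alpha_i\lambda})^{k-1}$ with $\lambda\in(0,1)$ and $\E|A_i|^{\alpha_i\lambda}<1$ (strict Jensen, using $\E|A_i|^{\alpha_i}=1$ and nondegeneracy) delivers geometric decay in $k$. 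The submultiplicativity $\tau(s_1,s_2)\le\tau(c|A_1|,c|A_2|)\,\tau(s_1/(c|A_1|),s_2/(c|A_2|))$, propagated through the iteration together with the tilted-measure moment condition $\E|A_i|^{\alpha_i}\tau(A_i)<\infty$, permits $\tau^\beta$ to be carried through the iterated expectations at a per-step cost bounded by a constant strictly less than $\rho_0^{-1}$.

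The main technical obstacle is precisely this last point: after the $n\sim\log\tau(s_1,s_2)$ iterations needed to unlock the Cauchy--Schwarz gain, the $\tau$-weighted errors must not overtake $\rho_0^n t^{-1}$. The polynomial growth bound $\tau(g)\le C_1(1+|g|)^{C_2}$ from the preceding lemma of this section is crucial here: it forces $\log\tau(s_1,s_2)=O(\log t)$, so $n=O(\log t)$, and the cumulative cost of propagating $\tau^\beta$ through the recursion stays strictly below the geometric gain, delivering the claimed bound $\P(|X_1|>r_1 t^{1/\alpha_1},\,|X_2|>r_2 t^{1/\alpha_2})\le Ct^{-1}\tau(t^{1/\alpha_1},t^{1/\alpha_2})^{-\beta}$.
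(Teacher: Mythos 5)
Your reduction of the main term is sound and is essentially the paper's own mechanism in disguise: the quantity $\rho=\E\,|A_1|^{\alpha_1/2}|A_2|^{\alpha_2/2}<1$ is exactly the statement that the measure $\tilde\eta_{\xi}=\chi_{\xi}\eta$ with $\chi_{\xi}(g)=|g_1|^{\alpha_1\xi}|g_2|^{\alpha_2(1-\xi)}$, $\xi=1/2$, is strictly subprobability, which the paper obtains by the same H\"older/Cauchy--Schwarz equality-case argument. The fatal problem is the error terms. Your scheme truncates the iteration at a finite $n=n(t)\sim\log\tau$ and pays a union-bound cost at every step, and the accounting you sketch cannot reach the target $Ct^{-1}\tau(t^{1/\alpha_1},t^{1/\alpha_2})^{-\beta}$. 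Concretely: (i) on the complement event, the Chernoff bound $\P(Y_{k-1}>s_ic^{-k})\le (s_ic^{-k})^{-\alpha_i\lambda}(\E|A_i|^{\alpha_i\lambda})^{k-1}$ with $\lambda<1$ produces a factor $s_i^{-\alpha_i\lambda}=r_i^{-\alpha_i\lambda}t^{-\lambda}$, and $t^{-\lambda}\gg t^{-1}$; summing over $k$ the complement alone contributes $O(t^{-\lambda})$, which swamps $t^{-1}\tau^{-\beta}$. Taking instead the critical exponent $\alpha_i$ restores the correct power $t^{-1}$ but kills the geometric decay in $k$ (since $\E\,Y_{k-1}^{\alpha_i}=1$) and leaves the factor $c^{\alpha_ik}$ from the per-step threshold shrinkage, so the sum over $k\le n$ grows like $c^{\alpha_in}=\tau^{+\mathrm{const}}$ rather than decaying. (ii) Even taking your favourable-event estimate at face value --- ``each term is of order $t^{-1}$'' --- summing $n\sim\log\tau$ such terms gives $O(t^{-1}\log\tau)$, which already fails to be $O(t^{-1}\tau^{-\beta})$; in fact, truncating $\E[Y^{\alpha_i+\sigma}\mathbf{1}_{\{Y\le s_ic^{-k}\}}]\le (s_ic^{-k})^{\sigma}\E Y^{\alpha_i}$ just reproduces the bound $c^{\alpha_ik}t^{-1}$ and the extra $\sigma$-moment of $B_i$ buys nothing. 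There is no choice of $c>1$, $\lambda<1$ and intermediate truncation level that escapes this: to keep the $t$-power at $-1$ you must use the critical exponent $\alpha_i$ for the products $Y_{k-1}^{(i)}$, and at the critical exponent there is no room left for the additional decay $\tau^{-\beta}$ that the theorem demands.

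The paper avoids this entirely by never truncating and never using the indicator/union-bound split. It replaces $\mathbf{1}_{\{|x_1|\ge r_1,|x_2|\ge r_2\}}$ by a H\"older function $f$, sets $h(g)=\E f(g^{-1}X)$ and $\psi=h-h*\eta$ (convolution on the multiplicative group $(\R\setminus\{0\})^2$), and uses the exact identity $\chi_{\xi}h=(\chi_{\xi}\psi)*U_{\xi}$ with $U_{\xi}=\sum_m\tilde\eta_{\xi}^m$ a finite measure. The H\"older regularity of $f$ makes the one-step difference genuinely small, $\chi_{\xi}(g)|\psi(g)|\le C\tau(g)^{-\gamma}$ (Lemma 4.6 of the paper), and the hypothesis $\E|A_i|^{\alpha_i}\tau(A_i)<\8$ gives $\int\tau^{\gamma}\,dU_{\xi}<\8$, so submultiplicativity of $\tau$ transfers the decay through the full series with no per-step loss. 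If you want to keep an iteration-style argument you would have to rebuild it around a smoothed test function and the infinite renewal series --- at which point you have reconstructed the paper's proof.
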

	{\bf Remark.} Observe, that we may obtain an extra logarithmic decay if $\tau (g_1,g_2)= (1+\log |g_1|)^{M_1}(1+\log |g_2|)^{M_2}$ or an extra polynomial decay if $\tau (g_1,g_2)= |g_1|^{\beta _1} |g_2|^{\beta _2}$. In Theorem \ref{asym1},
	$\tau (g_1,g_2)= (1+\log |g_1|)(1+\log |g_2|)$. As before, we have \eqref{extdecay} also for different blocks.
\subsection{Proof of Theorem \ref{asym2}}
In this section we consider the multiplicative group $G=(\R \setminus \{ 0\})^2$. Let $\eta $ be a measure on $G$ defined by 
\begin{equation*}
\eta (W_1\times W_2)=\P (A_1\in W_1,A_2\in W_2)
\end{equation*}
for Borel sets $W_1, W_2\subset \R \setminus \{ 0\}$. 
\begin{lem}
	Given $0<\xi <1$ let
	\begin{equation}\label{subpr}
	\chi _{\xi}(g) = |g_1|^{\a _1\xi }|g_2|^{\a _2(1-\xi )}.
	\end{equation}
	Then $\wt \eta _{\xi }=\chi _{\xi }(g)\eta $ is a strictly subprobability measure. Moreover, if \eqref{condtau} holds then there is $\g _0>0$ such that 
	\begin{equation}\label{subpro}
\int _G \tau (g)^{\g}\ d\wt \eta _{\xi }(g)<1, \quad \mbox{for} \ \gamma \leq \gamma _0.
	\end{equation}
\end{lem}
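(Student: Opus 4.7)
I would prove the two assertions separately, the first by a single H\"older inequality and the second by a continuity argument in $\g$ built on top of it.

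For the strict subprobability assertion, write $\phi(0):=\int _G\chi _\xi\,d\eta=\E\bigl[|A_1|^{\a _1\xi}|A_2|^{\a _2(1-\xi)}\bigr]$ and apply H\"older with conjugate exponents $p=1/\xi$ and $q=1/(1-\xi)$ to obtain, in view of \eqref{A1},
\begin{equation*}
\phi(0)\le (\E|A_1|^{\a _1})^{\xi}(\E|A_2|^{\a _2})^{1-\xi}=1.
\end{equation*}
Equality in H\"older would force $|A_1|^{\a _1}$ and $|A_2|^{\a _2}$ to be a.s.\ proportional, and since both have expectation $1$ the proportionality constant must equal $1$; this contradicts the hypothesis $\P(|A_1|^{\a _1}\neq|A_2|^{\a _2})>0$, so $\phi(0)<1$, as asserted for $\wt\eta _\xi$.

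For \eqref{subpro}, set $\phi(\g):=\int _G\tau^{\g}\chi _\xi\,d\eta$ and show that $\phi$ is right-continuous at $\g=0$; combined with $\phi(0)<1$ this yields $\phi(\g)<1$ for all $\g$ in some interval $(0,\g _0]$. Since $\tau$ is assumed strictly positive on $(\R\setminus\{0\})^2$ and is finite-valued, $\tau(g)^{\g}\to 1$ pointwise as $\g\to 0^+$, so only a dominating function is missing. Fix $\g _0\le\min(\xi,1-\xi)$ and use the elementary inequality $\tau(g)^{\g}\le 1+\tau(g)^{\g _0}$, valid for $0\le\g\le\g _0$, which reduces the task to the finiteness of $\int\chi _\xi\tau^{\g _0}\,d\eta$.

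For the latter I would exploit submultiplicativity of $\tau$ on $G$, $\tau(g_1,g_2)\le\tau(g_1)\tau(g_2)$, and apply H\"older once more with exponents $1/\xi$ and $1/(1-\xi)$ to the factors $|g_1|^{\a _1\xi}\tau(g_1)^{\g _0}$ and $|g_2|^{\a _2(1-\xi)}\tau(g_2)^{\g _0}$, obtaining
\begin{equation*}
\int\chi _\xi\tau^{\g _0}\,d\eta\le\bigl(\E|A_1|^{\a _1}\tau(A_1)^{\g _0/\xi}\bigr)^{\xi}\bigl(\E|A_2|^{\a _2}\tau(A_2)^{\g _0/(1-\xi)}\bigr)^{1-\xi}.
\end{equation*}
Since $\g _0/\xi,\g _0/(1-\xi)\le 1$, the crude bound $u^{\theta}\le 1+u$ for $u\ge 0,\theta\in[0,1]$, together with assumption \eqref{condtau}, makes each factor finite. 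Dominated convergence then gives $\phi(\g)\to\phi(0)<1$ as $\g\to 0^+$, which proves \eqref{subpro}. The main obstacle is producing the dominator from only the single-variable moments in \eqref{condtau}; submultiplicativity splits the joint factor $\tau(g)$ into $\tau(g_1)\tau(g_2)$, and the H\"older split with exponents $1/\xi,1/(1-\xi)$ then converts the integral into two one-dimensional ones whose convergence is granted provided $\g _0$ is chosen small enough to keep $\g _0/\xi$ and $\g _0/(1-\xi)$ below $1$.
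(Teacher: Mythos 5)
Your proof is correct and follows essentially the same route as the paper's: Hölder with exponents $1/\xi$, $1/(1-\xi)$ (strict by the non-proportionality of $|A_1|^{\a_1}$ and $|A_2|^{\a_2}$) for the first claim, and for the second the submultiplicative splitting $\tau(g)\le\tau(g_1)\tau(g_2)$ followed by the same Hölder split to get finiteness of $\int\tau^{s}\chi_\xi\,d\eta$ for $s<\min(\xi,1-\xi)$, then continuity at $0$. You merely make explicit two points the paper leaves implicit, namely the equality case of Hölder and the dominated-convergence justification of continuity.
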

\begin{proof} Since $|A_1|^{\a _1}\neq |A_2|^{\a _2}$, we have
	\begin{align*}
	\int _G\chi _{\xi }(g)\ d\eta (g)&=\int _G |g_1|^{\a _1\xi }|g_2|^{\a _2(1-\xi )} \ d \eta (g) =\E |A_1|^{\a _1\xi }|A_2|^{\a _2(1-\xi) }\\
	&< \left (\E |A_1|^{\a _1} \right )^{\xi }\left (\E |A_2|^{\a _2} \right )^{1-\xi }=1,
	\end{align*}
	which proves that $\eta _{\xi }$ is a strictly probability measure.
	Let 
	\begin{equation}\label{weaker}
	k (s)=\E \tau (A_1,A_2)^s|A_1|^{\a _1\xi }|A_2|^{\a _2(1-\xi) }.
	\end{equation}
	Then
	\begin{align*}
	k (s)&\leq \E \tau (A_1)^s\tau (A_2)^s|A_1|^{\a _1\xi }|A_2|^{\a _2(1-\xi) }\\
	&\leq \left (\E \tau (A_1)^{s\slash \xi } |A_1|^{\a _1}\right )^{\xi }\left (\E \tau (A_2)^{s\slash (1-\xi)}|A_2|^{\a _2}\right )^{1-\xi}<\8 
		\end{align*}
	provided $s< s_0=\min (\xi , (1-\xi))$. Moreover, $k(0)<1$ and $k(s) $ is continuous on $[0,s_0)$. Hence \eqref{subpro} follows.	
\end{proof}
	{\bf Remark.} For the proof of Theorem \ref{asym2} we need just one $\xi $, possibly $\xi =1\slash 2$ but we keep a more general setting because in some specific cases the choice of $\xi $ may be used to improve or optimize \eqref{extdecay}. Moreover, \eqref{condtau} may be replaced by \eqref{weaker} with $s<\min (\xi , 1-\xi )$ which is slightly weaker. 

\medskip
{\it Proof of Theorem \ref{asym2}.}
	 Let $f\in H^{\zeta}(\R ^2)$ be such that $\supp f\subset \{ x: |x_1|\geq \eps _1,\ |x_2|\geq \eps _2 \}$ and  $X=(X_1,X_2)$. Define
\begin{equation}\label{h}
h(g)=\E f(g^{-1}X):= \E f(g_1^{-1}X_1,g_2^{-1}X_2),\quad \wt h(g)=\chi _{\xi}(g)h(g)=\chi _{\xi}(g) \E f(g^{-1}X) . 
\end{equation}
Observe that contrary to the previous section we don't use dilations in \eqref{h} (as it was for $\bar f$ in the previous section) but just multiplication. 
$\wt h$ is bounded. Indeed,
\begin{align*}
\chi (g) \E f(g^{-1}X)&\leq |g_1|^{\a _1\xi }|g_2|^{\a _2(1-\xi )}||f| |_{sup}\E \Ind {|X_1|>\eps _1g_1}
\Ind {|X_2|>\eps _2g_2}\\
&\leq |g_1|^{\a _1\xi }|g_2|^{\a _2(1-\xi )}||f|| _{sup}\left (\E \Ind {|X_1|>\eps _1g_1}\right )^{\xi}
\left (\E \Ind {|X_2|>\eps _2g_2}\right )^{1-\xi }\leq C.
\end{align*}
Let $h*\eta (g)=\int _G h(g(g')^{-1})d\eta (g')$ i.e. here $*$ is the convolution on $G$.  
\begin{equation}\label{psiasym}
\psi (g)=h(g)-h*\eta (g)\quad \mbox{and}\quad \wt \psi (g)=\chi _{\xi}(g)\psi (g).
\end{equation} 
Then
\begin{align*}
\wt \psi (g)=&\wt h(g)-\wt h * \wt \eta _{\xi}(g)\\
\wt \psi *\wt \eta _{\xi}^m(g)=&\wt h*\wt \eta _{\xi}^m(g)-\wt h * \wt \eta _{\xi}^{m+1}(g).
\end{align*}
Hence 
\begin{equation*}
\sum _{m=0}^N\wt \psi *\wt \eta _{\xi}^m(g)=\wt h(g)-\wt h * \wt \eta _{\xi}^{N+1}(g)
\end{equation*}
and $\lim _{N\to \8}\wt h*\wt \eta _{\xi}^{N+1}(g)=0$ because $\wt \eta _{\xi}$ is a strictly subprobability measure.
Therefore,
\begin{equation*}
\wt h (g)=\sum _{m=0}^{\8}\wt \psi *\wt \eta _{\xi}^m(g)=\wt \psi *U _{\xi}(g),
\end{equation*}
where $U _{\xi}=\sum _{m=0}^{\8 }\wt \eta ^m$ is a finite measure.
We shall prove in Lemma \ref{psitau1} that for some $\beta ,C>0$
\begin{equation}\label{htau}
\wt h(g)=\wt \psi *U _{\xi}(g)\leq C\tau (g)^{-\beta}.
\end{equation}
Now in \eqref{h} we take a non negative $f\in H^{\zeta}(\R ^2)$ such that $f(x)=1$ if $|x_1|\geq r_1, |x_2|\geq r_2$ and $\supp f\subset \{ x: |x_1|\geq r _1\slash 2,  \ |x_2|\geq r _2 \slash 2\}$. For $g=(t^{1\slash \a _1}, t^{1\slash \a _2})$, by \eqref{htau}, we have
\begin{align*}
\P \left (|X_1|>r_1t^{1\slash \a _1}, |X_2|>r_2t^{1\slash \a _2}\right )&\leq h \left (t^{1\slash \a _1}, t^{1\slash \a _2}\right )\\
= \chi _{\xi } \left (t^{1\slash \a _1}, t^{1\slash \a _2}\right) ^{-1}\wt h \left (t^{1\slash \a _1}, t^{1\slash \a _2}\right )&\leq Ct^{-1}\tau \left (t^{1\slash \a _1}, t^{1\slash \a _2}\right )^{-\beta  }
\end{align*}
and \eqref{extdecay} follows.\hskip 400 pt \boxed{}

\bigskip
To complete \eqref{htau} first we need the following lemma
\begin{lem}\label{psitau}
	Suppose that $f$ and $\xi $ are fixed, $\wt \psi $ is defined in \eqref{psiasym}. Then there are $\g _1, C >0$ such that
	\begin{equation*}
	\wt \psi (g)\leq C\tau (g)^{-\g}, \quad \mbox{for}\ \gamma \leq \gamma _1.
	\end{equation*}
\end{lem}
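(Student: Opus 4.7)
The plan is to first use the SRE to obtain a tractable expression for $\psi$, then apply H\"older regularity of $f$, and finally reduce the resulting joint tail bound to decay in $\tau(g)$ via submultiplicativity.

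Since $\bfA$ and $\bfX$ are independent, and $g^{-1}$ commutes with the diagonal action of $\bfA$, I can simplify $h\ast\eta(g)=\E f(g^{-1}\bfA\bfX)$. Using \eqref{affine1} to rewrite $h(g)=\E f(g^{-1}\bfX)=\E f(g^{-1}(\bfA\bfX+\bfB))$, one gets
\[ \psi(g)=\E\bigl[f(g^{-1}(\bfA\bfX+\bfB))-f(g^{-1}\bfA\bfX)\bigr]. \]
Since $f\in H^{\zeta}$ and $\supp f\subset\{|x_1|\geq\eps_1,\,|x_2|\geq\eps_2\}$, the H\"older estimate yields
\[ |\psi(g)|\leq C_f\,\E\bigl[|g^{-1}\bfB|_{\a}^{\zeta}\mathbf{1}_{S(g)}\bigr],\quad S(g)\subset\bigcap_{i=1,2}\bigl\{|A_iX_i|\geq\tfrac{\eps_i}{2}|g_i|\text{ or }|B_i|\geq\tfrac{\eps_i}{2}|g_i|\bigr\}, \]
exactly as in Step~2 of the proof of Theorem~\ref{hom}.

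Next I would split $\mathbf{1}_{S(g)}$ into the four sub-events obtained by picking one alternative per coordinate, bound $|g^{-1}\bfB|_{\a}^{\zeta}\leq|g_1|^{-\a_1\zeta}|B_1|^{\a_1\zeta}+|g_2|^{-\a_2\zeta}|B_2|^{\a_2\zeta}$, and on each sub-event apply Markov's inequality to both indicators with tunable exponents $s_1,s_2$: $\mathbf{1}_{|Y_i|\geq c|g_i|}\leq(|Y_i|/c|g_i|)^{\a_i s_i}$ for $Y_i\in\{A_iX_i,B_i\}$. After these substitutions, the expectations factor over $\bfA$ using the independence of $\bfA$ and $(\bfX,\bfB)$, and their finiteness is checked by H\"older's inequality combined with $\E|B_j|^{\a_j+\s}<\8$ from \eqref{A2} and $\E|X_j|^{\a_j(1-\s^2)}<\8$ from Lemma~\ref{Rmoment}. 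With (for example) $\xi=1/2$, a sufficiently small $\zeta$, and $s_i$ chosen slightly above $\xi_i:=\xi$ or $1-\xi$ in the correct direction, the exponents of $|g_1|$ and $|g_2|$ that emerge from each sub-event after multiplication by $\chi_{\xi}(g)=|g_1|^{\a_1\xi}|g_2|^{\a_2(1-\xi)}$ are both strictly negative. Summing the four contributions therefore gives
\[ \wt\psi(g)\leq C(1+|g_1|)^{-c_1}(1+|g_2|)^{-c_2}\quad\text{for some }c_1,c_2,C>0. \]

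Finally, $\tau$ is submultiplicative on $G=\R^2$, so $\tau(g)\leq\tau(g_1)\tau(g_2)$, and \eqref{tauestim} applied to each one-variable factor gives $\tau(g)\leq C'(1+|g_1|)^{C_2}(1+|g_2|)^{C_2}$. Hence $\tau(g)^{-\gamma}\geq C''(1+|g_1|)^{-\gamma C_2}(1+|g_2|)^{-\gamma C_2}$, and the lemma follows by setting $\gamma_1:=\min(c_1,c_2)/C_2$. The main technical obstacle is the moment bookkeeping: $\a_j$ is the critical tail index of $X_j$, so each $s_j p_j$ must stay strictly below $1$ inside a H\"older inequality, while the $\s$-margin in $\E|B_j|^{\a_j+\s}<\8$ must simultaneously absorb the $\zeta$-contribution and allow $s_1+s_2$ to exceed roughly $1-\zeta$, which is what forces \emph{both} coordinates of $|g|$ into decay. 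Checking the arithmetic shows a feasible window of $(\zeta,s_1,s_2)$ always exists, at the price of shrinking $\gamma_1$ as $\s$ decreases.
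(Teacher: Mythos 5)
Your reduction of $\psi$ via the SRE, the use of the H\"older property of $f$ together with the support restriction to produce the factor $|g^{-1}\bfB|_{\a}^{\zeta}$ times the two indicators, the Markov/moment trade-off, and the final conversion of polynomial decay into $\tau(g)^{-\g}$ via \eqref{tauestim} and submultiplicativity are all exactly the paper's strategy, and your moment bookkeeping (the window $1-\zeta<s_1+s_2<1-\zeta(1+\s\a_1^{-1})^{-1}$ opened up by the $\s$-margin in \eqref{A2} and by Lemma \ref{Rmoment}) is the correct arithmetic.

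There is, however, one genuine gap in the last step. A single choice of exponents making the powers of $|g_1|$ and $|g_2|$ \emph{both strictly negative} gives a bound of the form $|g_1|^{-c_1}|g_2|^{-c_2}$, which blows up as $|g_1|\to 0$ or $|g_2|\to 0$ and therefore does \emph{not} imply the uniform bound $C(1+|g_1|)^{-c_1}(1+|g_2|)^{-c_2}$ that you assert; and the lemma is needed on all of $G=(\R\setminus\{0\})^2$, since in Lemma \ref{psitau1} the estimate is evaluated at $g'g^{-1}$ over the full support of $U_{\xi}$, so you cannot restrict to $|g_1|,|g_2|>1$. The paper resolves this with a case analysis over the four regimes $|g_i|\lessgtr 1$: when $|g_i|\leq 1$ the exponent of $|g_i|$ must be chosen \emph{nonnegative} (e.g.\ take $s_i=0$ and arrange the split of $|g^{-1}\bfB|_{\a}^{\zeta}$ so that no negative power of the small coordinate appears, using boundedness of $\psi$ when both coordinates are small), while negativity is enforced only in the coordinates with $|g_i|>1$. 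Your tunable-exponent framework accommodates this fix directly — you just need to let $(\zeta,s_1,s_2)$ depend on the regime of $g$, as the paper lets $(\zeta,\wt\zeta)$ do — but as written the step ``summing the four contributions therefore gives $\wt\psi(g)\leq C(1+|g_1|)^{-c_1}(1+|g_2|)^{-c_2}$'' does not follow from the displayed intermediate bound.
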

\begin{proof}
	Recall that $X=(X_1,X_2)$ satisfies $X\stackrel{d}{=}AX+B$ with $A=diag (A_1,A_2), B=(B_1,B_2)$. In view of \eqref{h} we have
	\begin{align*}
	\psi (g)=&\E f(g^{-1}X)-\E f(g^{-1}g'X) d\eta (g')\\
	=&\E f(g^{-1}X)-\E f(g^{-1}AX) \\
	=&\E f(g^{-1}(AX+B))-\E f(g^{-1}AX).
	\end{align*}
	In the second equality we use that $0\notin \supp f$. Observe that if for $i=1$ or $i=2$
	\begin{equation}\label{supp}
	|A_iX_i|+|B_i|<\eps _i|g_i|
	\end{equation}
	then $\E f(g^{-1}(AX+B))-\E f(g^{-1}AX)=0$. Indeed, \eqref{supp} for given $i$ implies that
	\begin{equation*}
	|g_i^{-1}(A_iX_i+B_i)|\leq |g_i^{-1}A_iX_i|+|g_i^{-1}B_i|<\eps _i
	\end{equation*}
	and
	\begin{equation*}
	|g_i^{-1}A_iX_i|< \eps _i.
	\end{equation*}
	Hence $f(g^{-1}(AX+B))=f(g^{-1}AX)=0$. 
	Therefore,
	\begin{equation}\label{cut}
	\E | f(g^{-1}(AX+B))-f(g^{-1}AX)|\leq C_f\E \| g^{-1}B\| ^{\zeta }_{\a}\Ind {|A_1X_1|+|B_1|\geq \eps _1|g_1|}\Ind {|A_2X_2|+|B_2|\geq \eps _2|g_2|}.
	\end{equation}
	We may assume that $\| g^{-1}B\| ^{\zeta }_{\a }=| g_1^{-1}B_1| ^{\a _1\zeta }$. The other case is analogous. Let $0<\wt \zeta <1$. We have 
	\begin{align*}
	\E | f(g^{-1}(AX+B))-&f(g^{-1}AX)|\leq C_f\E | g_1^{-1}B_1| ^{\a _1\zeta }\Ind {|A_2X_2|+|B_2|\geq \eps _2|g_2|}\\
	&\leq C_f\E | g_1^{-1}B_1| ^{\a _1\zeta }(|A_2X_2|+|B_2|)^{\a _2(1-\wt \zeta )} 
	\left (\eps _2|g_2|\right )^{-\a _2(1-\wt \zeta)}\\
	&= C_f \eps _2^{-\a _2(1-\wt \zeta)} | g_1|^{-\a _1\zeta }|g_2|^{-\a _2(1-\wt \zeta)}\E \left [ | B_1| ^{\a _1\zeta }(|A_2X_2|+|B_2|)^{\a _2(1-\wt \zeta )} \right ]
\end{align*}
Finally,
\begin{align*}
\tau (g)^{\g }\wt \psi (g)&=\tau (g)^{\g} \chi _{\xi }(g) \psi (g)\\
&\leq C\tau (g)^{\g } | g_1|^{\a _1(\xi -\zeta )}|g_2|^{\a _2(\wt \zeta -\xi)}\left (\E \left [ | B_1| ^{\a _1\zeta }|A_2X_2| ^{\a _2(1-\wt \zeta )}\right ]+\E \left [|B_1| ^{\a _1\zeta }|B_2|^{\a _2(1-\wt \zeta )} \right ]\right ).
\end{align*}
First we prove that if 
\begin{equation}\label{zeta}
\zeta (1+\s \a _1^{-1})^{-1}< \wt \zeta <1
\end{equation}
then 
\begin{equation}\label{twoexp}
  \E \left [ | B_1| ^{\a _1\zeta }|B_2|^{\a _2(1-\wt \zeta )} \right ], \E \left [ | B_1| ^{\a _1\zeta }|A_2X_2|^{\a _2(1-\wt \zeta )} \right ]<\8 .
\end{equation}
Indeed, to estimate the second term we choose 
$$\frac{1}{p}=\frac{\zeta }{1+\s \a _1^{-1}}, \quad \frac{1}{q}=1-\frac{\zeta }{1+\s \a _1^{-1} }>1-\wt \zeta .$$
Then
\begin{equation*}
\E \left [ | B_1| ^{\a _1\zeta }|A_2X_2|^{\a _2(1-\wt \zeta )} \right ]\leq 
\left (\E \left [ | B_1| ^{\a _1\zeta p} \right ]\right )^{1\slash p}
\left (\E \left [ |A_2X_2|^{\a _2(1-\wt \zeta )q } \right ]\right )^{1\slash q}<\8
\end{equation*}
because $\a _1\zeta p= \a _1+\s$ and $\a _2(1-\wt \zeta )q<\a _2$. In the same way we estimate the first term in \eqref{twoexp}.

Now we claim that $\tau (g)^{\g }\wt \psi (g)$ is bounded provided $\g $ is small enough. Observe that  by \eqref{tauestim}, there is $s $ such that 
\begin{equation*}
\tau (g)\leq \tau (g_1,1)\tau (1,g_2)\leq C(1+|g_1|)^{s }(1+|g_2|)^{s }.
\end{equation*}
Moreover, $\psi $ is bounded. Hence, if $|g_1|, |g_2|\leq 1$ then $\tau (g)^{\g }\chi _{\xi}(g) \psi (g)$ is bounded as well. 
If $|g_1|>1$,  $|g_2|\leq 1$, we chose $\zeta >\xi, \wt \zeta\geq \xi $. Then 
\begin{equation*}
\tau (g)^{\g}| g_1|^{\a _1(\xi -\zeta )}|g_2|^{\a _2(\wt \zeta-\xi)}\leq C(1+|g_1|)^{\g s}|g_1|^{\a _1(\xi -\zeta )},
\end{equation*}
which is bounded provided $\g s \leq \a _1(\zeta -\xi )$.
If $|g_2|>1, |g_1|\leq 1$, we chose $\wt \zeta <\xi, \zeta \leq \xi $. Then 
\begin{equation*}
\tau (g)^{\g}| g_1|^{\a _1(\xi -\zeta )}|g_2|^{\a _2(\wt \zeta-\xi)}\leq C(1+|g_2|)^{\g s}|g_2|^{\a _2(\wt \zeta -\xi  )},
\end{equation*}
which is bounded provided $\g s \leq \a _2(\xi -\wt \zeta )$.
If both $|g_1|, |g_2|> 1$, we chose $\wt \zeta <\xi < \zeta $. Then 
\begin{equation*}
\tau (g)^{\g}| g_1|^{\a _1(\xi -\zeta )}|g_2|^{\a _2(\wt \zeta-\xi)}\leq C(1+|g_1|)^{\g s}|g_1|^{\a _1(\xi -\zeta )}(1+|g_2|)^{\g s}|g_2|^{\a _2(\wt \zeta -\xi  )},
\end{equation*}
which is bounded provided $\g s \leq \a _1(\zeta -\xi )$, $\g s \leq \a _2(\xi -\wt \zeta )$. In all the cases we keep \eqref{zeta} and so, in view of \eqref{twoexp} the conclusion follows.
\end{proof}
\begin{lem}\label{psitau1}
	Let $\g \leq \min (\g _0, \g _1)$. Then there is $C$ such that
	\begin{equation*}
	\wt \psi *U_{\xi}(g)\leq C\tau (g)^{-\g}.
	\end{equation*}
\end{lem}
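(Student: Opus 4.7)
The plan is to combine the pointwise decay of $\wt\psi$ proved in Lemma \ref{psitau} with a finite ``$\tau^\gamma$--moment'' of the renewal measure $U_\xi$ coming from \eqref{subpro}. The bridge between the two estimates is the submultiplicativity of $\tau$ on $G$ together with the standing assumption $\tau(g_1,g_2)>0$ for $g_1,g_2\neq 0$.

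The first step would be to turn submultiplicativity into a pointwise upper bound on $\tau(g(g')^{-1})^{-\gamma}$. Writing $g=(g(g')^{-1})\cdot g'$ gives $\tau(g)\leq \tau(g(g')^{-1})\tau(g')$, and positivity of $\tau$ on $G$ lets one rearrange to
$$\tau(g(g')^{-1})^{-\gamma}\leq \tau(g)^{-\gamma}\tau(g')^{\gamma}, \qquad g,g'\in G,\ \gamma>0.$$

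The second step is to insert the pointwise bound from Lemma \ref{psitau} directly into the convolution. Since $U_\xi$ is a positive measure and Lemma \ref{psitau} supplies $\wt\psi(h)\leq C\tau(h)^{-\gamma}$ for $\gamma\leq \gamma_1$,
$$\wt\psi * U_\xi(g)=\int_G \wt\psi\bigl(g(g')^{-1}\bigr)\,dU_\xi(g')\leq C\,\tau(g)^{-\gamma}\int_G \tau(g')^{\gamma}\,dU_\xi(g').$$

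The last step is to verify that $\int_G\tau^\gamma\,dU_\xi<\infty$. Since $\tau^\gamma$ is itself submultiplicative, a Fubini--type induction on $m$ yields
$$\int_G \tau^\gamma\,d\wt\eta_\xi^m\;\leq\;\Bigl(\int_G \tau^\gamma\,d\wt\eta_\xi\Bigr)^m,$$
and by \eqref{subpro} the base is strictly less than $1$ for $\gamma\leq \gamma_0$. Summing the geometric series and taking $\gamma\leq \min(\gamma_0,\gamma_1)$ produces the asserted bound with $C=C(\gamma)\bigl(1-\int\tau^\gamma d\wt\eta_\xi\bigr)^{-1}$.

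The argument is essentially formal; there is no genuine obstacle. The only points that require care are to use the \emph{upper} bound from Lemma \ref{psitau} (so one does not need to worry about the sign of $\wt\psi$) and to note that the positivity hypothesis on $\tau$ is exactly what legitimises inverting it in the submultiplicativity manipulation. The whole scheme works because the two thresholds $\gamma_0$ (subprobability mass of $\wt\eta_\xi$) and $\gamma_1$ (decay of $\wt\psi$) are both strictly positive, so a common admissible $\gamma$ exists.
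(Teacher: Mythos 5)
Your proof is correct and follows essentially the same route as the paper: bound $\wt\psi$ pointwise via Lemma \ref{psitau}, use submultiplicativity and positivity of $\tau$ to get $\tau(g(g')^{-1})^{-\gamma}\leq\tau(g)^{-\gamma}\tau(g')^{\gamma}$, and control $\int_G\tau^\gamma\,dU_\xi$ by the geometric series coming from \eqref{subpro}. The only difference is the order in which the three steps are presented, which is immaterial.
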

\begin{proof}
	Let $\g \leq \g _0$. Then by \eqref{subpro}
	\begin{align*}
	&\int _G\tau (g)^{\gamma}\ dU_{\xi}(g)=\sum _{n=0}^{\8}\int _G\tau (g)^{\g}d \wt \eta _{\xi}^n(g)\\
	=&\sum _{n=0}^{\8}\int _G\tau (g_1...g_n)^{\g}\ d \wt \eta _{\xi}(g_1)...\ d \wt \eta _{\xi}(g_n)
	\leq \sum _{n=0}^{\8}\left (\int _G\tau (g)^{\g}\ d \wt \eta _{\xi}(g)\right )^n<\8.
	\end{align*}
	By Lemma \ref{psitau}, for $\g \leq \min (\g _0, \g _1)$, we have 
	\begin{equation*}
	\wt \psi *U_{\xi }(g')=\int _G\wt \psi (g'g^{-1})\ dU_{\xi}(g)\leq C\int _G \tau (g'g^{-1})^{-\g}\ dU_{\xi}(g).
	\end{equation*}
	But $\tau (g')\leq \tau (g'g^{-1}) \tau (g)$. Hence
	\begin{equation*}
	\tau (g'g^{-1})^{-\g}\leq \tau (g')^{-\g} \tau (g)^{\g}
	\end{equation*}
	an so
	\begin{equation*}
	\wt \psi *U_{\xi}(g')\leq C \tau (g')^{-\g }\int _G \tau (g)^{\g}\ dU_{\xi}(g).
	\end{equation*}
\end{proof}

\subsection{Proof of Theorem \ref{asym3}}
In this subsection we assume that $X, B$ are independent. We keep notation of the previous section.
In particular, $f$ is a non negative function in $H^{\zeta}(\R ^2)$ such that  $f(x)=1$ if $|x_1|\geq r _1$,   $|x_2|\geq r _2$,
$\supp f\subset \{ x: |x_1|\geq r _1\slash 2,  \ |x_2|\geq r _2 \slash 2\}$. Then $\psi ,\wt \psi $ are defined in \eqref{psiasym}, $\wt h $ in \eqref{h}.

 For $n,m\in \Z$ let
\begin{equation*}
\Delta_{n,m}=\{ g: n<g_1\leq n+1, m<g_2\leq m+1\}.
\end{equation*}
\begin{lem}Suppose that \eqref{A0}-\eqref{A1} are satisfied, $\E |A_i|^{\a _i}\log ^+|A_i|<\8 $ and $\E |B_i|^{\a _i}<\8$. Let $\zeta <\xi $. Then
	\begin{equation}\label{Iriem}
	I=\sum _{n,m}\sup _{g\in \Delta _{n,m}}|\chi _{\xi}(g)\psi (g)|<\8
	\end{equation}
\end{lem}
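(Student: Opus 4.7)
The plan is to imitate Step 2 of the proof of Theorem \ref{hom}, replacing the margin $\s>0$ available in \eqref{A2} there by the independence $\bfX\perp\bfB$ that is now assumed.

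First, since $\bfX\stackrel{d}{=}\bfA\bfX+\bfB$ and $\bfX\perp\bfA$, I rewrite
\begin{equation*}
\psi(g)=\mathbb{E}\bigl[f(g^{-1}(\bfA\bfX+\bfB))-f(g^{-1}\bfA\bfX)\bigr].
\end{equation*}
The support condition on $f$ forces both values to vanish off the set $P_1(g_1)\cap P_2(g_2)$ with $P_i(g_i)=\{|A_iX_i|+|B_i|\geq\eps_i|g_i|\}$ and $\eps_i=r_i/2$, exactly as in Lemma \ref{psitau}. H\"older continuity $|f(y)-f(z)|\leq C_f|y-z|_\a^\zeta$ then gives
\begin{equation*}
\chi_\xi(g)|\psi(g)|\leq C\sum_{i=1}^{2}\chi_\xi(g)|g_i|^{-\a_i\zeta}\,\mathbb{E}\bigl[|B_i|^{\a_i\zeta}\mathbf{1}_{P_1(g_1)}\mathbf{1}_{P_2(g_2)}\bigr].
\end{equation*}

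Second, I restrict attention to the quadrant $g_1,g_2>0$ (the others are treated identically) and to the sub-sum over $n,m\geq 1$; the boundary strips where $n\leq 0$ or $m\leq 0$ correspond to one of $|g_1|,|g_2|$ being bounded, where $\chi_\xi|\psi|$ is itself bounded on the relevant compact region and contributes $O(1)$ to $I$. On $\Delta_{n,m}$, the prefactor $\chi_\xi(g)|g_1|^{-\a_1\zeta}=|g_1|^{\a_1(\xi-\zeta)}|g_2|^{\a_2(1-\xi)}$ is dominated by $(n+1)^{\a_1(\xi-\zeta)}(m+1)^{\a_2(1-\xi)}$, and $\mathbf{1}_{P_i(g_i)}\leq\mathbf{1}_{n\leq N_i}$ (resp.\ $m\leq N_i$) with $N_i:=(|A_iX_i|+|B_i|)/\eps_i$. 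Swapping sum and expectation, and using that $\xi-\zeta>0$ and $1-\xi>0$ make both inner arithmetic sums finite with $\sum_{k\leq N}(k+1)^a\leq C N^{a+1}$, I obtain
\begin{equation*}
I\leq C\,\mathbb{E}\bigl[|B_1|^{\a_1\zeta}(|A_1X_1|+|B_1|)^{\a_1(\xi-\zeta)+1}(|A_2X_2|+|B_2|)^{\a_2(1-\xi)+1}\bigr]+\text{symmetric term}+O(1).
\end{equation*}

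Finally, I need the displayed expectation to be finite. Expanding the product and invoking the independence of $\bfX$ from $\bfA$ together with $\bfX\perp\bfB$, every resulting monomial factorizes into a product of marginal and bivariate moments of $|X_i|$'s, $|A_i|$'s and $|B_i|$'s; the cross-moments of $(X_1,X_2)$, $(A_1,A_2)$ and $(B_1,B_2)$ are then handled by H\"older's inequality with exponents summing to one. The main obstacle is the moment bookkeeping: by Theorem \ref{Goldie1} one only has $\sup_t t^{\a_i}\P(|X_i|>t)<\8$, so $\E|X_i|^{\a_i}=\8$, and $\E|B_i|^{\a_i}<\8$ is tight. Every $|X_i|$-power must therefore be kept strictly below $\a_i$ and every $|B_i|$-power at most $\a_i$. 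The strict inequality $\zeta<\xi$, combined with the freedom to split joint moments of $(X_1,X_2)$ and $(B_1,B_2)$ via H\"older with exponents summing to one, is precisely what allows the two critical powers $\a_1(\xi-\zeta)+1$ and $\a_2(1-\xi)+1$ to be distributed so that all marginal exponents land in the finite-moment range; the hypothesis $\bfX\perp\bfB$ is essential here, since without it the needed factorization would require the extra margin $\s>0$ of \eqref{A2} as in Lemma \ref{psitau}.
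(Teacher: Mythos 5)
Your first step (reducing $\chi_\xi(g)|\psi(g)|$ to an expectation over $P_1(g_1)\cap P_2(g_2)$ via the H\"older continuity of $f$) matches the paper, but the summation step contains a fatal loss of a power. You treat the boxes $\Delta_{n,m}$ on a linear scale and invoke the arithmetic bound $\sum_{k\le N}(k+1)^a\le CN^{a+1}$, which produces the exponents $\a_1(\xi-\zeta)+1$ and $\a_2(1-\xi)+1$. These are too large to close the moment bookkeeping: the variable $X_2$ occurs only inside the factor $(|A_2X_2|+|B_2|)^{\a_2(1-\xi)+1}$, and since H\"older's inequality can only \emph{increase} the exponent of each factor it is applied to, finiteness of your displayed expectation forces $\E|X_2|^{s}<\8$ for some $s\ge\a_2(1-\xi)+1$. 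By Lemma \ref{Rmoment} / Theorem \ref{Goldie1} the tail of $X_2$ is of order $t^{-\a_2}$, so one needs $\a_2(1-\xi)+1<\a_2$, i.e.\ $\a_2\xi>1$, which fails whenever $\a_2\le1$ (and symmetrically for $X_1$). In general your final expectation is therefore infinite and the bound is vacuous; the choice $\zeta<\xi$ does not rescue it.

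The paper's own proof avoids this by using \emph{exponential} boxes: although the displayed definition of $\Delta_{n,m}$ reads linearly, the proof manifestly works with $e^n<|g_1|\le e^{n+1}$, $e^m<|g_2|\le e^{m+1}$ (see the indicators $\Ind{\{|A_1X_1|+|B_1|\ge\eps_1e^n\}}$ and the prefactor $e^{(n+1)\a_1\xi+(m+1)\a_2(1-\xi)}$). The inner sums are then geometric, $\sum_{n\le n_0}e^{n\a_1(\xi-\zeta)}\le Ce^{n_0\a_1(\xi-\zeta)}$ with $n_0=-\log\eps_1+\log(|A_1X_1|+|B_1|)$, so no extra power appears and one lands on $\E\,|B_1|^{\a_1\zeta}(|A_1X_1|+|B_1|)^{\a_1(\xi-\zeta)}(|A_2X_2|+|B_2|)^{\a_2(1-\xi)}$, whose exponents on $X_1,X_2$ remain strictly below $\a_1,\a_2$ after H\"older with $1\slash p=\xi-\zeta\slash2$, $1\slash q=1-\xi+\zeta\slash2$; this is exactly where $\E|B_i|^{\a_i}<\8$ and $\E|X_i|^{s}<\8$ for $s<\a_i$ suffice. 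A secondary point: your dismissal of the strips $n\le0$ or $m\le0$ as contributing $O(1)$ ``on a compact region'' is also not right — for instance $\{0<g_1\le1,\ g_2>1\}$ is an infinite union of boxes, so its contribution to $I$ is an infinite sum over $m$ that converges only via the same geometric decay, not by boundedness of $\chi_\xi\psi$ alone.
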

Once the above lemma is proved, Theorem \ref{asym3} follows. Indeed, 
\begin{align*}
t\P \left (|X_1|>r_1t^{1\slash \a _1}, |X_2|>r_2t^{1\slash \a _2}\right )&\leq \wt h \left (t^{1\slash \a _1}, t^{1\slash \a _2}\right )
= \wt \psi *U_{\xi}(t^{1\slash \a _1}, t^{1\slash \a _2})\\
&=\int _G\wt \psi (t^{1\slash \a _1}g_1^{-1}, t^{1\slash \a _2}g_2^{-1} )U_{\xi}(g)\to 0, \ \mbox{as}\ t\to \8 .
\end{align*}
because, in view of \eqref{Iriem}, $\lim _{t\to \8} \wt \psi (t^{1\slash \a _1}g_1^{-1}, t^{1\slash \a _2}g_2^{-1} )=0$ and $U_{\xi}$ is a finite measure.

\medskip
{\it Proof of Theorem \ref{asym3}.}
	We choose $0<\zeta <\xi $. As in the proof of Lemma \ref{psitau}, 
	\begin{equation*}
	\psi(g)\leq C_f\E \| g^{-1}B\| ^{\zeta }_{\a}\Ind {|A_1X_1|+|B_1|\geq \eps _1|g_1|}\Ind {|A_2X_2|+|B_2|\geq \eps _2|g_2|}.
		\end{equation*}
		We may assume that $\| g^{-1}B\| ^{\zeta }_{\a}=|g_1|^{-\a _1\zeta}| B_1| ^{\a _1\zeta }$. The other case is analogous. If
		$g\in \Delta_{n,m}$ then
		\begin{equation*}
		\wt \psi (g)\leq C_f e^{(n+1)\a _1\xi +(m+1)\a _2(1-\xi)}e^{-n\a _1\zeta } \E | B_1| ^{\a _1\zeta }_{\a}\Ind {|A_1X_1|+|B_1|\geq \eps _1e^n}\Ind {|A_2X_2|+|B_2|\geq \eps _2e^m}
		\end{equation*}
		 Let $P_{n,m}=\{|A_1X_1|+|B_1|\geq \eps _1e^n, |A_2X_2|+|B_2|\geq \eps _2e^m \}$. Then
		\begin{align*}
		I\leq & e^{\a _1\xi + \a _2(1-\xi)}\sum _{n,m\in \Z ^2}e^{n\a _1(\xi-\zeta)}e^{m\a _2(1-\xi)}\E |B_1|^{\a _1\zeta}\Ind {P_{n,m}}\\
		=&e^{\a _1\xi + \a _2(1-\xi)}\E \left (|B_1|^{\a _1\zeta}\sum _{n\leq n_0, m\leq m_0}e^{n\a _1(\xi-\zeta)}e^{m\a _2(1-\xi)}	\right )	
		\end{align*}
		where $n_0, m_0$ are random variables defined by 
		\begin{equation*}
		n_0=-\log \eps _1 +\log \left ( |A_1X_1| +|B_1|\right ), \quad m_0=-\log \eps _2 +\log \left ( |A_2X_2| +|B_2|\right ).
		\end{equation*}
		Indeed, if $n>n_0$ or $m>m_0$ then $P_{n,m}=\emptyset$. Hence for\newline
		$C_0=e^{\a _1\xi + \a _2(1-\xi)}\left (1-e^{\a _1(\xi -\zeta)}\right )^{-1}
		\left (1-e^{\a _2(1-\xi )}\right )^{-1}\eps _1^{-\a _1(\xi -\zeta)}\eps _2^{-\a _2(1-\xi)}$
		\begin{align*}
		I&\leq C_0\E |B_1|^{\a _1\zeta}\left ( |A_1X_1| +|B_1|\right )^{\a _1(\xi -\zeta )}
		\left ( |A_2X_2| +|B_2|\right )^{\a _2(1-\xi)}\\
		&\leq C_0 C  \Big (\left (\E |B_1|^{\a _1\zeta}|A_1|^{\a _1(\xi -\zeta)}|A_2| ^{\a _2(1-\xi) }\right ) \left (\E |X_1|^{\a _1(\xi -\zeta)}|X_2| ^{\a _2(1-\xi) }\right )\\
		&+\left (\E |B_1|^{\a _1\zeta}|A_1|^{\a _1(\xi -\zeta)}|B_2| ^{\a _2(1-\xi) }\right ) \left (\E |X_1|^{\a _1(\xi -\zeta)}\right )\\
		&+\left (\E |B_1|^{\a _1\zeta}|B_1|^{\a _1(\xi -\zeta)}|A_2| ^{\a _2(1-\xi) }\right ) \left (\E |X_2| ^{\a _2(1-\xi) }\right )\\ 
		&+\left (\E |B_1|^{\a _1\xi}|B_2| ^{\a _2(1-\xi) }\right ) \Big)
		\end{align*}
		It amounts to prove that all the expectations above are finite. We shall use the H\"older inequality with various exponents. First take $1\slash p= \xi -\zeta \slash 2, \  1\slash q= 1-\xi +\zeta \slash 2$ and so
		\begin{equation*}
		\E |X_1|^{\a _1(\xi -\zeta)}|X_2| ^{\a _2(1-\xi) }\leq \left (\E |X_1|^{\a _1(\xi -\zeta)p}\right )^{1\slash p}\left (\E |X_2| ^{\a _2(1-\xi)q }\right )^{1\slash q}<\8
		\end{equation*} 
		because $\a _1(\xi -\zeta)p <\a _1, \a _2(1-\xi)q <\a _2$. For the last term we take $p=1\slash \xi , q=1\slash (1-\xi)$. For the remaining expectations we take $r_1=1\slash \zeta ,  r_2=1\slash (\xi -\zeta), r_3=1\slash (1-\xi)$ i.e
		\begin{equation*}
		\E |B_1|^{\a _1\zeta}|A_1|^{\a _1(\xi -\zeta)}|B_2| ^{\a _2(1-\xi) }\leq \left (\E |B_1|^{\a _1}\right )^{\zeta}\left (\E |A_1|^{\a _1}\right )^{(\xi -\zeta)}\left (\E |B_2| ^{\a _2}\right )^{(1-\xi) }<\8 .
		\end{equation*}
			 
\section{Proof of Theorem \ref{main} and \ref{mainth}}\label{conclusion}
The main step is to prove \eqref{main1}. The argument follows closely the one in \cite{buraczewski:damek:mirek:2010}, Lemma 2.6. It is rather standard and it generalizes the one in Lemma 2.1 in \cite{resnick:1987}. See also \cite{davis:resnick:1996}.
Using continuous functions in \eqref{main1} or \eqref{main2} instead of passing immediately to $m _t$  considerably simplifies the arguments. 

Let $\L _l$ be the tail measure obtained in Theorem \ref{hom} and 
\begin{equation*}
B_r^{(l)}(0)=\{ x\in \R ^{(l)}: |x|_{\a} <r \}.
\end{equation*}
To prove \eqref{main1} observe that every function $f\in \bfC (\R ^d)$ may be written as a sum of $p$ functions
\begin{equation*}
f= f^{(1)}+...+f^{(p)}
\end{equation*}
such that
\begin{equation*}
\supp f^{(l)}\subset \R^{d_1}\times ...\R ^{d_{l-1}}\times \left (\R ^d_{l}\setminus B_{r}^{(l)}(0)\right)\times \R ^{d_{l+1}}\times...\times \R ^{d_p},
\end{equation*}
and so, without loss of generality, it is sufficient to prove that
\begin{equation*}
\lim _{t\to \8} t\E f^{(1)}(\d _{t^{-1}}\bfX)=\langle f^{(1)}_1,\L _1\rangle ,
\end{equation*} 
where $f_1^{(1)}(x_1)=f(x_1,0,...,0)$. 
To simplify the notation, we will write $f$ instead of $f^{(1)}$, $f_1$ instead of $f^{(1)}_1$ and we assume that
\begin{equation}\label{kula}
\supp f\subset \left (\R ^{d_{1}}\setminus B_{r}^{(1)}(0)\right )\times \R ^{d_2}\times...\times \R ^{d_p}.\end{equation}

Suppose first that $f\in H^{\zeta }(\R ^{d})$ and $f_1(x_1)=f(x_1,0,...,0)$. Then $f_1\in H^{\zeta }(\R ^ {d_1})$ and so by \eqref{convHol} 
\begin{equation}
\lim _{t\to \8}t\E f_1(\d _{t^{-1}}\bfX ^{(1)}=\langle f_1,\L _1\rangle .
\end{equation} 
Now we write $\bfX=(\bfX^{(1)},\bar \bfX)$, $\bar \bfX= (\bfX^{(2)},...,\bfX^{(p)})$ and consider 
\begin{align*}
I_t:=&t \left |\E f (\d _{t^{-1}}\bfX) - \E f_1(\d _{t^{-1}}\bfX^{(1)})\right |\leq t \E \left |f (\d _{t^{-1}}\bfX) - f(\d _{t^{-1}}\bfX^{(1)}, 0)\right | \\
=&\E \left |f(\d _{t^{-1}}\bfX^{(1)}, \d _{t^{-1}}\bar \bfX)-f (\d _{t^{-1}}\bfX^{(1)},0)  \right | \Ind {\{|\bar \bfX|_{\a}>\eps t\}}\\
+&\E \left |f(\d _{t^{-1}}\bfX^{(1)}, \d _{t^{-1}}\bar \bfX)-f (\d _{t^{-1}}\bfX^{(1)},0)\right | \Ind {\{|\bar \bfX|_{\a}\leq \eps t\}}=: I_{t,1}+I_{t,2}.
\end{align*}
Now we are going to prove that $I_t\to 0$ when $t\to \8$. For $I_{t,1}$, by Theorem \ref{asym1} or \ref{asym3}, we have
\begin{equation*}
|I_{t,1}|\leq t\| f \| _{\8}\E \Ind {\{|\bfX ^{(1)}|_{\a}> rt\}}\Ind {\{|\bar \bfX|_{\a}>\eps t\}}= C \| f\| _{\8 }o(1), \quad \mbox{as}\ t\to \8.
\end{equation*}
$I_{t,2}$ may be estimated as follows
\begin{align*}
|I_{t,2}|&\leq t\E \left |f(\d _{t^{-1}}\bfX^{(1)}, \d _{t^{-1}}\bar \bfX)-f (\d _{t^{-1}}\bfX^{(1)},0)\right | \Ind {\{| \bfX ^{(1)}|_{\a}> rt\}}\Ind {\{|\bar \bfX|_{\a}\leq \eps t\}}\\
&\leq tC_{f}\E |\d _{t^{-1}}\bar \bfX|_{\a}^{\zeta}\Ind {\{|\bfX ^{(1)}|_{\a}>t\}}\leq tC_f\eps ^{\zeta}\P (|\bfX^{(1)}|_{\a}> t)\leq C\eps ^{\zeta}.
\end{align*}
Finally, for every $\eps >0$
\begin{equation*}
\limsup _{t\to \8} I_t\leq C\eps ^{\zeta }
\end{equation*}
and, letting $\eps \to 0$, we obtain the conclusion.

For generic $f\in \bfC (\R ^d)$ we proceed as in the proof of Theorem \ref{hom}. For a large $M$, as in \eqref{2r}, we write
\begin{equation*}
f=h_{2M}f+\left (1-h_{2M}\right )f
\end{equation*}
and observe that 
\begin{align*}
\left |\E h_{2M}f(\d _{t^{-1}}\bfX)\right |&\leq t\|f\|_{\8 }\P \left (|\d _{t^{-1}}\bfX|_{\a }>M\right )\\
&\leq t\|f\| _{\8 }\sum _{l=1}^p\P \left (|\bfX^{(l)}|_{\a }>Mt\right )\leq C\|f\| _{\8}M^{-1}
\end{align*}
and 
\begin{equation*}
\left |\langle (h_{2M}f)_1, \L _1 \rangle \right |\leq |f| _{\8}\L _1\left (B_M^{(1)}(0)^c \right )\leq C|f| _{\8}M^{-1}.
\end{equation*}
It is now enough to prove that 
\begin{equation}\label{compact}
\lim _{t\to \8}t\E \left ((1-h_{2M})f\right ) (\d _{t^{-1}}\bfX)=\langle \left ((1-h_{2M})f\right )_1, \L _1\rangle
\end{equation}
and pass with $M$ to $\8 $. $\left (1-h_{2M}\right )f\in C_c(\R ^d)$ and so, given $\eps >0$, it may be approximated by $g\in H^{\zeta }(\R ^d)$ such that $\supp \ g\subset \left (\R^{d_1}\setminus B^{(1)}_{r\slash 2}\right ) \times \R ^{d_2}\times ...\times \R ^{d_p}$ (see \eqref{kula})
and
\begin{equation*}
\left \|(1-h_{2M})f-g\right \| _{\8 }<\eps .
\end{equation*}
Then also
\begin{equation*}
\left \|\left ((1-h_{2M})f\right )_1-g_1\right \| _{\8 }<\eps
\end{equation*}
and, by \eqref{estimball}
\begin{equation*}
\left | \langle \left ((1-h_{2M})f\right )_1, \L _1\rangle -\langle g_1, \L _1\rangle \right | <\eps \L _1\left (B_{r\slash 2}^{(1)}(0)^c \right )\leq C\eps r^{-1}. 
\end{equation*}
Moreover, 
\begin{equation*}
\lim _{t\to \8}t\E  g(\d _{t^{-1}}\bfX)=\langle g_1, \L _1\rangle .
\end{equation*}
Hence
\begin{align*}
\limsup _{t\to \8}&\left |t\E \left (1-h_{2M}\right )f (\d _{t^{-1}}\bfX)-\langle \left ((1-h_{2M})f\right )_1, \L _1\rangle \right |\\
&\leq \limsup _{t\to \8}\left |t\E \left (1-h_{2M}\right )f (\d _{t^{-1}}\bfX)-t\E g(\d _{t^{-1}})\right |
+ \lim _{t\to \8}\left |t\E g(\d _{t^{-1}})-\langle g_1, \L _1\rangle\right |\\
&+\left |\langle g_1, \L _1\rangle -\langle \left ((1-h_{2M})f\right )_1, \L _1\rangle \right |\\
&\leq \eps t \P \left ( |\bfX^{(1)}|_{\a}>rt \right )+C\eps r^{-1}\leq C\eps r^{-1}
\end{align*}
Now letting $\eps \to 0$, we obtain \eqref{compact} and \eqref{main1} is proved.

\eqref{main3},\eqref{main2} follow from Theorem \ref{hom}. Moreover, by Theorem \ref{asym1}  ``blocks'' $\bfX^{(l)}$ are  asymptotically independent. 
Hence to prove \eqref{main5} 
we proceed as in the proof of Theorem 6.1 in \cite{mentemeier:wintenberger:2021}. Finally, to 
obtain Theorem \ref{mainth} we use \eqref{Lambda5} and again the arguments contained in \cite{mentemeier:wintenberger:2021}, proof of Theorem 6.1.

\section{One dimensional case}\label{onedim}
In this section we consider the one-dimensional case.  
	First, for the readers convenience, we recall the Goldie Implicit Renewal Theorem. $R,M$ are random variables with values in $\R $.
	\begin{thm}[\cite{goldie:1991}]
		\label{Goldie}
			Suppose that 	
		there exists $\alpha>0$ such that\\
		\quad\, $\bullet$ $\E[|M|^\alpha]=1$,\,  $\E |M|^\alpha \log^+|M|<\infty$. \\
		$\bullet$\  
		and the conditional law of $\log |M|$ given
		$\{M\neq 0\}$ is non-arithmetic. Then
		\begin{equation*}
		-\8 \leq \E \log |M| <0
		\end{equation*}
		and
		\begin{equation*}
		0< \bfm =\E |M|^{\a}\log |M| <\8.
		\end{equation*}
		Suppose further that $R,M$ are independent and
				\begin{equation}\label{implicite}
		\int _0^{\8}|\P (\pm R>t)-\P (\pm MR>t)|t^{\a -1}\ dt <\8 
		\end{equation}
		Then there exist constants $c_\pm$ such that 
		\begin{align*}
		\lim _{t\to \8}\P(\pm R>t) t^{\a}= 	c_\pm 
		\end{align*}
		as $x\to\infty$. 
		\end{thm}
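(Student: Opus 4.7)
The plan is to reduce the implicit hypothesis to a renewal equation on the line (or a system on $\R\times\Z_2$ when $M$ may be negative), apply the non-arithmetic key renewal theorem (Theorem \ref{renewalthm}) after a smoothing step, and then transfer the resulting limit back to $\P(\pm R>t)t^{\alpha}$ using monotonicity of tails.

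First I would establish the two preliminary assertions on $\E\log|M|$ and $\bfm$. The function $\phi(s):=\E|M|^s$ is log-convex on $[0,\alpha]$, and $\phi(0)=1=\phi(\alpha)$. Convexity forces $\phi'(0^+)\le 0$ and $\phi'(\alpha^-)\ge 0$; the non-arithmeticity of $\log|M|$ given $M\ne 0$ rules out $|M|$ being a.s.\ constant, so $\phi$ is strictly convex on $[0,\alpha]$ and both inequalities are strict, which is exactly $\E\log|M|<0$ and $\bfm=\E|M|^{\alpha}\log|M|>0$. Finiteness $\bfm<\infty$ uses $\E|M|^{\alpha}\log^+|M|<\infty$ together with boundedness of $x^{\alpha}|\log x|$ near $x=0$.

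Next, assume temporarily $M>0$ a.s. Put $r(u):=e^{\alpha u}\P(R>e^u)$ and let $G$ be the law of $\log M$. Independence of $M,R$ gives $\P(MR>e^u)=\int \P(R>e^{u-s})\,dG(s)$. Multiplying the identity $\P(R>e^u)-\P(MR>e^u)=D(e^u)$ by $e^{\alpha u}$ and writing $d\nu(s):=e^{\alpha s}dG(s)$ yields
\begin{equation*}
r(u)-r*\nu(u)=f(u),\qquad f(u):=e^{\alpha u}D(e^u).
\end{equation*}
Now $\nu$ is a probability measure because $\E M^{\alpha}=1$, it has positive mean $\int s\,d\nu(s)=\bfm$, and it is non-arithmetic because $G$ is. The hypothesis $\int_0^{\infty}|D(t)|t^{\alpha-1}dt<\infty$ is, by the substitution $t=e^u$, precisely $\int_{\R}|f(u)|du<\infty$.

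The function $f$ is only absolutely integrable, so to invoke the key renewal theorem I would smooth: replace $r,f$ by $\tilde r(u):=\int_0^{\infty}e^{-y}r(u-y)\,dy$ and $\tilde f$ defined analogously. Then $\tilde r-\tilde r*\nu=\tilde f$, and $\tilde f$ is continuous (because $\tilde r$ is) and absolutely integrable, hence directly Riemann integrable once one also uses that $e^{-\alpha u}r(u)=\P(R>e^u)$ is monotone. Theorem \ref{renewalthm} then gives
\begin{equation*}
\lim_{u\to\infty}\tilde r(u)=\frac{1}{\bfm}\int_{\R}f(s)\,ds=\frac{1}{\bfm}\int_0^{\infty}[\P(R>t)-\P(MR>t)]\,t^{\alpha-1}dt.
\end{equation*}
To pass back from $\tilde r$ to $r$, I would use that $u\mapsto e^{-\alpha u}r(u)$ is non-increasing: a standard sandwich on intervals $[u,u+h]$ (letting first $u\to\infty$, then $h\to 0$) shows $\lim r(u)=\lim \tilde r(u)$, which gives $c_+$. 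Replacing $R$ by $-R$ gives $c_-$ analogously.

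Finally, to handle the signed case $\P(M<0)>0$, I would write the renewal equation as a $2\times 2$ system indexed by the sign of the running product, exactly in the spirit of the group $D=\R\times\Z_2^n$ used in Section \ref{homogeneous}: the pair $(r_+,r_-)$ satisfies a Markov renewal equation driven by $\nu$ coupled with the sign-flip kernel of $M$, and the same key renewal theorem on $\R\times\Z_2$ produces the two limits $c_\pm$. The principal technical obstacle is the smoothing step: absolute integrability of the defect $f$ does not by itself give direct Riemann integrability, and producing a $dRi$ surrogate whose limit one can legitimately transport back to $r$ is what makes the renewal machinery applicable; the monotonicity of $t\mapsto \P(\pm R>t)$ is what saves the day.
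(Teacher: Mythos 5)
This theorem is not proved in the paper at all --- it is imported verbatim from \cite{goldie:1991} --- so the relevant comparison is with Goldie's original argument, and your sketch reproduces it faithfully: exponential tilting of the tail identity into a renewal equation on $\R$ (or on $\R\times\Z_2$ when $\P(M<0)>0$), exponential smoothing, the key renewal theorem, and de-smoothing via monotonicity of $t\mapsto\P(\pm R>t)$. One correction to the step you flag as the obstacle: direct Riemann integrability of the smoothed defect $\tilde f(u)=\int_0^\8 e^{-y}f(u-y)\,dy$ does not come from monotonicity of $e^{-\alpha u}r(u)$ but from the identity $\tilde f'=f-\tilde f$, which makes $\tilde f$ absolutely continuous with total variation at most $2\int_{\R}|f|$, and an $L^1$ function of finite total variation on $\R$ is dRi; the monotonicity is what you need (as you correctly use it) only for the final sandwich transferring $\lim\tilde r$ back to $\lim r$. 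You should also record the standard verification that $\tilde r$ equals the minimal solution $\tilde f* U$ of the whole-line renewal equation (e.g.\ via $r(u)\le e^{\alpha u}$ and the vanishing of the iterated remainder $\tilde r*\nu^{*n}$), since on $\R$ the renewal equation does not have a unique bounded-below solution for free.
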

		{\bf Remark.} Without further assumptions $c_+, c_-$ may be zero.
Suppose now that $R \stackrel{d}{=} MR+Q$, $M,R$ are independent, but $R$ is not necessarily independent of $Q$. 
As an immediate application of Theorem \ref{Goldie} we obtain the following asymptotics of $\P (\pm R>t)$. 
	\begin{thm}
		\label{Goldie1}
				Suppose that $M$ satisfies the assumptions of the previous theorem, 	
		 $R,M$ are independent, 
		\begin{align}
		\label{def:uni:sfe}
		R \stackrel{d}{=} MR+Q,
				 		\end{align}
		and there is $\s >0$ such that
		\begin{equation}\label{Qmom}
		\E[|Q|^{\alpha +\s}]<\infty .
		\end{equation}
		Then there exist constants $c_\pm$ such that 
		\begin{align*}
		\lim _{t\to \8}\P(\pm R>t) t^{\a}= 	c_\pm 
		\end{align*}
		as $x\to\infty$. 
	\end{thm}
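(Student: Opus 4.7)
The plan is to reduce to Goldie's Implicit Renewal Theorem (Theorem~\ref{Goldie}). The hypotheses on $M$ are inherited verbatim, so what needs verifying is the integrability condition \eqref{implicite}:
\begin{equation*}
\int_0^{\8} |\P(\pm R > t) - \P(\pm MR > t)|\, t^{\a-1}\, dt < \8.
\end{equation*}
Realize $R = MR + Q$ on a common probability space, so that $\P(\pm R > t) = \P(\pm(MR+Q) > t)$ and the two indicators can be compared pointwise. A direct case-analysis on the signs of $MR$ and $Q$ yields, for every $\omega$, the identity
\begin{equation*}
\int_0^{\8}\bigl|\mathbf{1}_{\{MR + Q > t\}} - \mathbf{1}_{\{MR > t\}}\bigr|\, t^{\a - 1}\, dt \;=\; \a^{-1}\bigl|(MR+Q)_+^{\a} - (MR)_+^{\a}\bigr|.
\end{equation*}
Taking expectation and using Fubini, the integrability condition with the $+$ sign reduces to $\E\bigl|(MR+Q)_+^{\a} - (MR)_+^{\a}\bigr| < \8$; the $-$ sign case is identical upon replacing $R$ with $-R$.

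For $\a \leq 1$, subadditivity $|a_+^{\a} - b_+^{\a}| \leq |a - b|^{\a}$ on $\R_+$ bounds this expectation by $\E|Q|^{\a}$, which is finite thanks to $\E|Q|^{\a + \s} < \8$. For $\a > 1$, the mean value theorem gives
\begin{equation*}
\bigl|(MR+Q)_+^{\a} - (MR)_+^{\a}\bigr| \;\leq\; \a\,(|MR| + |Q|)^{\a-1}\,|Q| \;\leq\; C_{\a}\bigl(|MR|^{\a-1}|Q| + |Q|^{\a}\bigr),
\end{equation*}
and the mixed term is handled by H\"older's inequality with conjugate exponents $q = \a+\s$ and $p = (\a+\s)/(\a+\s-1)$. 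A short computation shows $(\a-1)p < \a$, so that $\E|MR|^{(\a-1)p} = \E|M|^{(\a-1)p}\cdot\E|R|^{(\a-1)p}$ is finite as soon as $R$ has a moment of order strictly below $\a$.

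The main obstacle is therefore to secure $\E|R|^{\b} < \8$ for some $\b \in (\a - 1,\a)$ when $\a > 1$. Convexity of $s \mapsto \log\E|M|^{s}$ together with $\E|M|^{\a} = 1$ and $\E\log|M| < 0$ yields $\E|M|^{\b} < 1$ for every $\b \in (0, \a)$. Feeding this into the distributional equation $R \stackrel{d}{=} MR + Q$ via Minkowski's inequality (after a truncation step to guarantee a priori finiteness, or by running a Picard iteration in the space of laws with finite $\b$-moment, as in Lemma~\ref{Rmoment}) supplies the required bound. Once the integrability condition is in hand, Theorem~\ref{Goldie} delivers the constants $c_{\pm}$ with $\lim_{t\to\8}\P(\pm R > t)\,t^{\a} = c_{\pm}$.
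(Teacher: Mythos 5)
Your proposal is correct and follows essentially the same route as the paper: reduce to Goldie's condition \eqref{implicite}, convert it to $\E\left|\left((MR+Q)^{\pm}\right)^{\a}-\left((MR)^{\pm}\right)^{\a}\right|<\8$ (your pointwise indicator identity is exactly the content of Goldie's Lemma 9.4), split on $\a\le 1$ versus $\a>1$, and in the latter case apply H\"older with exponents $\a+\s$ and $(\a+\s)\slash(\a+\s-1)$ together with the moment bound $\E|R|^{s}<\8$ for $s<\a$ supplied by Lemma \ref{Rmoment}. The only caveat is that your parenthetical Minkowski-after-truncation (or Picard iteration) justification of that moment bound is delicate when $Q$ and $R$ are dependent and there is no iteration behind the equation, but since you ultimately defer to Lemma \ref{Rmoment} --- exactly as the paper's own proof does --- this is not a gap.
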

		
				{\bf Remark.} When $R$ is independent of $(M,Q)$, Theorem \ref{Goldie1} was proved in \cite{goldie:1991} 
					with $\s =0$ and only a slight modification is needed to conclude it in our setting.  
	
\begin{proof}
	    Theorem \ref{Goldie1} follows from Lemma 2.2 and Theorem 2.3 in \cite{goldie:1991}. 
    Only condition \eqref{implicite}
        must be checked and under \eqref{def:uni:sfe} it becomes
        \begin{equation*}
       \int _0^{\8}|\P (\pm (MR+Q)>t)-\P (\pm MR>t)|t^{\a -1}<\8.
        \end{equation*}
        The above follows from 
    \begin{equation*}
    I=\E \left | \left ((MR+Q)^{\pm}\right )^{\a }-\left ((MR)^{\pm}\right )^{\a }\right |<\8 ,
    \end{equation*}
    as it is explained in Lemma 9.4 in \cite{goldie:1991}. Indeed, if $\a \leq 1$ then 
    $$I\leq \left | (MR+Q)^{\pm}-(MR)^{\pm}\right |^{\a }\leq \E |Q|^{\a }<\8$$ 
    because for any $a,b\in \R $, $|a^{\pm}-b^{\pm}|\leq |a-b|$.
    If $\a >1$, we may write
    \begin{align*}
    I\leq &\a \E \left [ | (MR+Q)^{\pm}-(MR)^{\pm} | \max (|MR+Q|^{\a -1}, |MR|^{\a -1})\right ]\\
    &\a \E \left [ |Q|\max (|MR+Q|^{\a -1}, |MR|^{\a -1})\right ]\\
    \leq &\a \max(1, 2^{\a -2})\E \left [ |Q| (|Q|^{\a -1} + |MR|^{\a -1})\right ].
    \end{align*}
    Now we have to prove that $\E \left [|Q||MR|^{\a -1}\right ]<\8 $.
   Taking $p=\a +\s , q=(\a +\s )\slash (\a +\s -1)$ and using independence of $M,R$, we obtain
    \begin{equation*}
    \E \left [|Q||MR|^{\a -1}\right ]\leq \left (\E |Q|^{\a +\s}\right )^{1\slash p}
\left (\E |M|^{q(\a -1)}\right )^{1\slash q} \left (\E |R|^{q(\a -1)}\right )^{1\slash q}<\8
    \end{equation*}
    because $q(\a -1)<\a $ and so $\E |R|^{q(\a -1)}<\8 $ in view of the next lemma. 
\end{proof}
	\begin{lem}\label{Rmoment}
		Suppose that \eqref{def:uni:sfe} and the assumptions of Theorem \ref{Goldie} are satisfied,
		$$R \stackrel{d}{=} MR+Q,$$
		 and $\E |Q|^{\a}<\8$. Then for every $0<s <\a $
		\begin{equation}\label{Rmom}
	\E |R|^s<\8.
	\end{equation}
	\end{lem}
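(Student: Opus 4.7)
The plan is to obtain a uniform-in-$t$ bound for the truncated moment $\psi(t) := \E[\min(|R|, t)^s]$; since $\psi(t) \uparrow \E|R|^s$ by monotone convergence as $t\to\infty$, this yields $\E|R|^s < \infty$. First I would record three preliminaries: (i) by convexity of $r \mapsto \E|M|^r$ together with $\E|M|^\alpha = 1$ and $\E\log|M|<0$, $\E|M|^r<1$ for every $r \in (0,\alpha)$; (ii) $\E|Q|^r < \infty$ for $r \in (0,\alpha]$ by interpolation from $\E|Q|^\alpha<\infty$; (iii) $\psi(t)/t^s \to 0$ as $t\to\infty$, since $\min(|R|/t,1)^s \le 1$ and tends pointwise to $0$, so DCT applies.

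For $s\le 1$, subadditivity of $r\mapsto\min(r,t)^s$ on $[0,\infty)$ applied to $R\stackrel{d}{=}MR+Q$, combined with the identity $\min(|M||R|,t)=|M|\min(|R|,t/|M|)$ and the independence of $M,R$, gives the recursion $\psi(t)\le \E[|M|^s\,\psi(t/|M|)]+\E|Q|^s$. Iterating $n$ times with i.i.d.\ copies $M_1,\ldots,M_n$ of $M$ and $\Pi_n := M_1\cdots M_n$:
\[
\psi(t) \;\le\; \E\bigl[|\Pi_n|^s\,\psi(t/|\Pi_n|)\bigr] + \E|Q|^s\sum_{k=0}^{n-1}(\E|M|^s)^k.
\]
By the SLLN $|\Pi_n|\to 0$ a.s., so $t/|\Pi_n|\to\infty$; writing $|\Pi_n|^s\psi(t/|\Pi_n|)=t^s\cdot\psi(t/|\Pi_n|)/(t/|\Pi_n|)^s$ and using (iii) forces the integrand to $0$ a.s.\ while it is dominated by $t^s$, so DCT kills the first term and $\psi(t)\le \E|Q|^s/(1-\E|M|^s)$ uniformly in $t$.

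For $1<s<\alpha$ subadditivity fails, so I would replace it by Young's inequality $(a+b)^s\le(1+\eta)^{s-1}a^s+C_\eta b^s$ with $C_\eta=((1+\eta)/\eta)^{s-1}$. The same chain produces
\[
\psi(t) \;\le\; C^n\,\E\bigl[|\Pi_n|^s\,\psi(t/|\Pi_n|)\bigr] + C_\eta\E|Q|^s\sum_{k=0}^{n-1}(C\E|M|^s)^k, \quad C:=(1+\eta)^{s-1}.
\]
The main obstacle is that here the naive bound on the first term grows like $C^n t^s$. The cure is to bootstrap: fix $s_0\in(0,1)\cap(0,\alpha)$ (so $\E|R|^{s_0}<\infty$ by the previous case) and choose $\eta$ so small that $C\max(\E|M|^s,\E|M|^{s_0})<1$, which is possible by (i). The elementary interpolation $\psi(L)\le L^{s-s_0}\E|R|^{s_0}$ (immediate from $\min(x,L^s)\le x^{s_0/s}L^{s-s_0}$) then bounds the first term by $t^{s-s_0}\E|R|^{s_0}(C\E|M|^{s_0})^n\to 0$, while the geometric sum stays bounded; letting $t\to\infty$ concludes.
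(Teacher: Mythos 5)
Your argument is correct, but it is not the route the paper takes. The paper works at the level of tail functions: with $f(t)=\P(|R|>t)$ it sets up a renewal equation $f=\psi+f*\mu$ on the multiplicative group $\R^+$ (where $\mu$ is the law of $|M|$ and $\psi(t)=\P(|R|>t)-\P(|MR|>t)$), iterates to get $f=\sum_m\psi*\mu^m$, tilts by $t^s$ so that the tilted renewal measure is finite, and thereby reduces $\int_0^\infty t^{s-1}f(t)\,dt<\infty$ to $\E\big||MR+Q|^s-|MR|^s\big|<\infty$ via Goldie's Lemma 9.4; the case $s>1$ is then handled by a contradiction bootstrap on $s_0=\sup\{s:I_s<\infty\}$, incrementing the exponent by less than $1-s_0/\alpha$ at each stage. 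You instead iterate the fixed-point equation directly on the truncated moments $\E\min(|R|,t)^s$, kill the remainder with the SLLN and dominated convergence, and bootstrap in a single step from some $s_0<1$ to any $s<\alpha$ via the interpolation $\E\min(|R|,L)^s\le L^{s-s_0}\E|R|^{s_0}$ (your displayed inequality "$\min(x,L^s)\le x^{s_0/s}L^{s-s_0}$" is a typo for $\min(x,L)^s\le x^{s_0}L^{s-s_0}$, but the conclusion you draw from it is the right one). Both proofs exploit independence of $M$ and $R$ at exactly the same point — to condition on $M$ and recognize the same functional of $R$ at a rescaled argument — and both split at $s=1$. What the paper's route buys is reuse: the convolution identity and the integrability of the tilted $\psi$ are the same objects needed for Theorem \ref{Goldie1} and the dRi verifications elsewhere. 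What your route buys is self-containedness and an explicit bound, $\E|R|^s\le C_\eta\E|Q|^s/(1-(1+\eta)^{s-1}\E|M|^s)$, without invoking Goldie's Lemma 9.4 or the renewal formalism; it is a legitimate and arguably more elementary alternative.
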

	{\bf Remark.} When $R$ and $(M,Q)$ are independent, \eqref{Rmom} follows immediately from the representation $R\stackrel{d}{=}\sum _{i=0}^{\8} M_1...M_{i-1}Q_i$, $(M_i,Q_i)$ being i.i.d with the same law as $(M,Q)$, \cite{goldie:1991}.
\begin{proof}
Let $f(t)=\P (|R|>t)$. We have to prove that $\int _0^{\8} t^{s-1}f(t)\ dt <\8$. Let $\mu $ be the the measure on $\R ^+$ defined by $\mu (B) = \P (|M|\in B)$, for a Borel set $B\subset \R ^+$. Observe that
\begin{equation*}
\E f (tM^{-1})=\int _{\R ^+}\P (R>ts^{-1})\ d\mu (s)=f*\mu (s),
\end{equation*}	
where $*$ denotes the convolution on the multiplicative group $\R ^+$. Let 
$$\psi (t)=f(t)-f*\mu (t)= \P (|R|>t)-\P (|MR|>t).$$ Then
\begin{equation*}
f (t)= \psi (t) + f*\mu (t)
\end{equation*}
and so 
\begin{equation*}
f (t)=\sum _{m=0}^N \psi *\mu ^m(t) + f*\mu ^{N+1}(t).
\end{equation*}
But $f*\mu ^{N+1}(t)=\P (|M_1...M_{N+1}|>t)\to 0 $ when $N\to \8$. Indeed, $M_1,...,M_{N+1}$ are i.i.d, and $\mu $ is either a strictly subprobability measure (if $\P (|M|=0)>0$) or it is a probability measure with $\E \log |M|<0$. Therefore,  
\begin{equation}\label{series} 
f (t)=\sum _{m=0}^{\8} \psi *\mu ^m(t).
\end{equation}
Now multiplying both sides of \eqref{series} by $t^s$,\ $s<\a $ and writing $\wt f(t)= t^sf(t), \ \wt \psi (t)= t^s \psi (t),\linebreak  \wt \mu =t^s \mu $ we have
\begin{equation*}
\wt f = \sum _{m=0}^{\8}\wt \psi *\wt \mu ^m=:\wt \psi *\wt U,
\end{equation*}
where $\wt U=\sum _{m=0}^{\8}\wt \mu ^m$ is a finite measure, because $\wt (\R ^+)<1$. Since 
\begin{align*}
\int _0^{\8}\wt f(t)t^{-1}\ dt=&\int _0^{\8} \wt \psi *\wt U(t)t^{-1}\ dt =\int _0^{\8} \wt \psi (ts^{-1})\ d\wt U(s)t^{-1}\ dt\\
=&\int _0^{\8} \wt \psi (ts^{-1})t^{-1}\ dt \ d\wt U(s)= \wt U(\R )\int _0^{\8} \wt \psi (t)t^{-1}\ dt .
\end{align*}
Now it amounts to prove that
\begin{equation}\label{integrab}
\int _0^{\8} \wt \psi (t) t^{-1}\ dt <\8 .
\end{equation}
 Since
\begin{equation*}
\wt \psi (t)= t^{s}\left (\P (|MR+Q|>t)-\P (|MR|>t)\right ),
\end{equation*}
by Lemma 9.4 in \cite{goldie:1991}, \eqref{integrab} follows from
\begin{equation*}
I_s =\E \left ||MR+Q|^s - |MR|^s|\right |<\8 .
\end{equation*}
For $s\leq 1$, the integrand is dominated by $|Q|^s$, so $I_s<\8$. Now suppose that $\a >1$ and $1\leq s _0=\sup \{ s: I_s<\8 \}<\a $. Take
$s_0<s<s_0+1-s_0\slash \a $. Then
\begin{equation*}
I_s\leq s\max(1,2^{s-2})\E |Q| (|MR|^{s-1}+|Q|^{s-1})
\end{equation*}
and we need to prove that $\E |Q||MR|^{s-1}<\8$. Proceeding as always, by H\"older inequality with $p=\a , q=\a \slash (\a -1)$, we have 
$$\E |Q||MR|^{s-1}\leq 
\left (\E |Q|^{\a }\right )^{1\slash \a}
\left (\E |M|^{q(s -1)}\right )^{1\slash q} \left (\E |R|^{q(s -1)}\right )^{1\slash q}<\8$$
because $q(s-1)<s_0.$
\end{proof}
\subsection{Strict positivity of $c_++c_-$}\label{positivity}
Asymptotics
\begin{equation}\label{strictpos}
t^{\a}\P (|R|>t)\sim c_++c_-
\end{equation}
  is exact if $c_++c_->0$. If $R$ and $(M,Q)$ are independent then  $c_++c_->0$ if and only if for every $x\in \R $,
 $\P(Mx+Q= x)<1$ i.e. $R$ is not constant, as it is shown in \cite{goldie:1991}, Theorem 4.1.  
Moreover, if $\P (M<0)>0$ then $c_+=c_->0$.
		
		If $\P (M\geq 0)=1$, $c_+, c_-$ may not be equal and so we may ask when $c_+, c_->0$. Conditions for strict positivity of $c_+, c_-$ in the case $\P (M\geq 0)=1$ were first given in \cite{guivarch:lepage:2012} and then elaborated and simplified in \cite{buraczewski:damek:mikosch:2016}, \cite{buraczewski:damek:2017}. It turns out that $c_+>0$ if and only if $R^+=\max (0,R)$ is unbounded.
		In particular, it may happen that $c_+>0$ and $c_-=0$.
	
If $R$ and $Q$ may be dependent none of the above methods works and then complex analysis helps. We may proceed as in \cite{buraczewski:damek:guivarch2009}. In view of the observations made in \cite{goldie:1991}, if \eqref{implicite} and \eqref{def:uni:sfe} are satisfied then 
\begin{equation}
c_++c_-=\int _0^{\8}\left (\P ( |MR+Q|>t)-\P ( |MR|>t)\right )t^{\a -1}\ dt 
\end{equation}
and so under assumptions of Theorem \ref{Goldie1}, in view of Lemma 9.4 in Goldie
\begin{equation*}
c_++c_-=\frac{1}{\a m}\E \left (|MR+Q|^{\a}-|MR|^{\a }\right ).
\end{equation*}	
Let
\begin{equation*}
s_{\8}=\sup \{ s: \E |M|^s + \E |Q|^s <\8 \}
\end{equation*}
and for $z\in \C $ we write
 $$\kappa (z):=\E |M|^z.$$
\begin{thm}\label{Rcontra}
	Suppose that $s_{\8}>\a $. Then $c_++c_-=0$ if and only if for every $s<s_{\8 }$
	\begin{equation}\label{Rallmom}
	\E |R| ^{s}<\8.
	\end{equation}
\end{thm}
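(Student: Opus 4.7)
The forward implication is immediate. If $\E|R|^s<\infty$ for every $s<s_{\infty}$, then for any $s\in(\alpha,s_{\infty})$ Markov's inequality yields $t^{\alpha}\P(|R|>t)\le t^{\alpha-s}\E|R|^{s}\to 0$ as $t\to\infty$, whence $c_++c_-=0$ by Theorem \ref{Goldie1}.

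For the converse I would follow the complex-analytic strategy of \cite{buraczewski:damek:guivarch2009}. Consider
$$F(z):=\E|R|^{z},\qquad \kappa(z):=\E|M|^{z};$$
by Lemma \ref{Rmoment}, $F$ is holomorphic on $\{0<\Re z<\alpha\}$, and $\kappa$ on $\{0\le\Re z<s_{\infty}\}$. The distributional identity $R\stackrel{d}{=}MR+Q$ together with independence of $M$ and $R$ (which gives $\E|MR|^{z}=\kappa(z)F(z)$) yields the Mellin-type functional equation
$$\bigl(1-\kappa(z)\bigr)F(z)=\eta(z),\qquad \eta(z):=\E\bigl(|MR+Q|^{z}-|MR|^{z}\bigr),$$
on the strip $\{0<\Re z<\alpha\}$.

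The central technical step is to extend $\eta$ holomorphically to a wider strip $\{0<\Re z<\alpha+\delta\}$ for some $\delta>0$. Using an elementary pointwise bound such as $\bigl||a+b|^{z}-|a|^{z}\bigr|\le|z|(|a|+|b|)^{\Re z-1}|b|$ for $\Re z\ge 1$ (and the analogous sub-additive bound otherwise), Hölder's inequality, and the moment hypotheses $\E|M|^{\alpha+\sigma},\E|Q|^{\alpha+\sigma}<\infty$ together with $\E|R|^{t}<\infty$ for $t<\alpha$ from Lemma \ref{Rmoment}, one shows that $|\eta(z)|$ is locally bounded on this larger strip. Since, by the identity $c_++c_-=(\alpha m)^{-1}\eta(\alpha)$ recalled just before the theorem, the assumption $c_++c_-=0$ gives $\eta(\alpha)=0$, and since $1-\kappa$ has a simple zero at $\alpha$ (because $\kappa'(\alpha)=m>0$), the quotient $\eta(z)/(1-\kappa(z))$ is holomorphic across $\Re z=\alpha$ and provides an analytic continuation of $F$.

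The hardest step is to upgrade this analytic continuation into genuine finiteness of moments. I would use Mellin inversion: $t\mapsto\P(|R|>t)$ is the inverse Mellin transform of $F(z)/z$, and because the potential pole of $F(z)/z$ at $z=\alpha$ has been annihilated by $\eta(\alpha)=0$, shifting the contour from some $c\in(0,\alpha)$ past $\alpha$ to $\Re z=\alpha+\delta'$ yields the pointwise bound $\P(|R|>t)=O(t^{-\alpha-\delta'})$, hence $\E|R|^{s}<\infty$ for every $s<\alpha+\delta'$. A bootstrap argument then progressively widens the strip of integrability; any further zero of $1-\kappa$ in $(\alpha,s_{\infty})$ is necessarily simple (by strict convexity of $\log\kappa$) and handled by the same residue-cancellation trick, so the iteration reaches every $s<s_{\infty}$. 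The principal obstacle throughout is justifying the contour shift: one needs polynomial control of $|F(c+iy)|$ as $|y|\to\infty$ uniformly on vertical strips, and this is precisely where the extra moment $\sigma$ in \eqref{A2} and the standing assumption $s_{\infty}>\alpha$ enter decisively.
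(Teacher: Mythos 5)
Your setup --- the functional equation $(1-\kappa(z))F(z)=\eta(z)$ on $\{0<\Re z<\a\}$, the extension of $\eta$ to a wider strip $\Re z<\a+\d$ via the pointwise bounds, H\"older's inequality and the extra moment $\s$, and the cancellation of the simple zero of $1-\kappa$ at $\a$ by $\eta(\a)=\a\bfm\,(c_++c_-)=0$ --- coincides with Step 1 of the paper's proof. The gap is in how you convert the resulting holomorphic continuation of $F$ past $\Re z=\a$ into actual finiteness of moments. Mellin inversion with a contour shift to $\Re z=\a+\d'$ requires integrability of $F(z)/z$ on vertical lines and some decay as $|\mathrm{Im}\, z|\to\8$; for a general law of $|R|$ the function $y\mapsto F(c+iy)=\E|R|^{c+iy}$ need not decay at all (it is merely bounded by $F(c)$), so the contour shift and the claimed pointwise bound $\P(|R|>t)=O(t^{-\a-\d'})$ are not justified. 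You correctly flag this as the principal obstacle, but the extra moment $\s$ in \eqref{A2} does not supply the missing control: it governs the width of the strip on which $\eta$ is holomorphic, not the vertical decay of $F$.

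The paper avoids this entirely by invoking Landau's theorem (Lemma \ref{landau}): the Mellin transform of a positive measure cannot be continued holomorphically to a neighbourhood of its real abscissa of convergence $\theta$. Since $h=\L/(1-\kappa)$ continues $F$ across $\a$, Landau's theorem immediately forces $\theta>\a$ --- no contour shifting and no pointwise tail estimate are needed. The passage from $\theta>\a$ to $\theta\geq s_{\8}$ is likewise not an iteration over further zeros of $1-\kappa$; there are none on $(\a,s_{\8})$, since convexity of $\kappa$ together with $\kappa(\a)=1$ and $\kappa'(\a)=\bfm>0$ gives $\kappa(s)>1$ for $s>\a$, so that part of your bootstrap is moot. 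Instead one assumes $\theta<s_{\8}$, checks directly (via the same pointwise bounds and H\"older, now using $\E|R|^{t}<\8$ for $t<\theta$) that $\L$ is finite slightly beyond $\theta$, and applies Landau's theorem once more to reach a contradiction. Replacing your inversion step by this appeal to Landau's theorem closes the gap and completes the argument.
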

	{\bf Remark.} It is reasonable to ask why and when \eqref{Rallmom} is not possible and it gives a contradiction. It is so, for instance if
	\begin{equation}\label{condR}
	s_{\8}=\8 \ \mbox{and}\ \sup _{s< s_\8}\left (\frac{\E |Q|^s}{\kappa (s)}\right )<\8\quad  \mbox{or}\quad  
	s_{\8}<\8 \ \mbox{and}\ \lim  _{s\to s_\8}\frac{\E |Q|^s}{\kappa (s)}=0
	\end{equation}
	as it is proved in \cite{buraczewski:damek:guivarch2009}. The proof in \cite{buraczewski:damek:guivarch2009} does not use independence of $Q$ and $R $, so we may conclude that $c_++c_-$ is strictly positive if assumptions of Theorem \ref{Goldie1} and \eqref{condR} are satisfied. In various settings, \eqref{Rallmom} may be impossible for some specific reasons depending on the model.
\begin{proof}
	Suppose that $c_++c_-=0$ (the other direction is obvious by \eqref{strictpos}). Let $\theta =\sup \{ s: \E |R|^s<\8 \}$. 
	
{ \bf Step 1.} First we shall prove that $\theta >\a $. For $z\in \C $, $0\leq \Re z $ we consider 
\begin{equation}\label{Guiv}
\L (z)=\E \left (|MR+Q|^z-|MR|^z\right ).
\end{equation}
If $\Re z <\a $ then $\L $ is well defined and
\begin{equation*}
\L (z) = \E |R|^z -\E |MR|^z= (1-\kappa (z))\E |R|^z.
\end{equation*}
Observe that $\Lambda $ is well defined for $\Re z=\a +\d$, $\d \leq \frac{\s }{\a +\s}$, $\s$ as in \eqref{Qmom}. Indeed, if $\Re z=\a +\d >1$ then
\begin{equation*}
|a^z-b^z|=\left |\int _a^bzr^{z-1}\ dr\right |\leq |z||a-b|\max \left (a^{\Re z -1}, b^{\Re z -1}\right ).
\end{equation*}
Hence, with $a=|MR|$ and $b=|MR+Q|$, we have
\begin{equation*}
|\Lambda (z)|\leq |z|\max\left (1,2^{\a +\d -2}\right )\E \left ( |Q|^{\a +\d}+ |Q||MR|^{\a +\d -1}\right )<\8.
\end{equation*}
Indeed, as before, for $ \E |Q||MR|^{\a +\d -1}$ we use H\"older inequality with $p=\a +\s , \ q=\frac{\a +\s}{\a +\s -1} $. Then $(\a +\d -1)q<\a $ and so
\begin{equation*}
\E |Q||MR|^{\a +\d -1}\leq \left (\E |Q|^{\a +\s}\right )^{1\slash p}\left (\E |MR|^{(\a +\d -1)q}\right )^{1\slash q}<\8 .
\end{equation*}
If $0\leq \R z\leq 1$ then
\begin{equation}\label{subcom}
|a^z-b^z|\leq (2|z|+3)|a-b|^{\Re z},
\end{equation}
(see the proof below) and so 
\begin{equation*}
|\Lambda (z)|\leq (2|z|+3)\E |Q|^{\Re z}<\8.
\end{equation*}

{\bf Step 2.} Now we shall conclude \eqref{Rallmom}. Since $\kappa '(\a )=\E |M|^{\a}\log |M|>0$, the function
\begin{equation*}
h(z)=\frac{\Lambda (z)}{1-\kappa (z)}
\end{equation*}
is well defined and holomorphic in a neighborhood of $\a $. Moreover, for $\Re z<\a $,
\begin{equation*}
h(z)=\E |R| ^{z}.
\end{equation*}
In view, of Landau Theorem (Theorem \ref{landau}) the abscissa of convergence $\theta $ of the law of $|R|$ is strictly larger then $\a $.
Now we shall prove that for every $s<s_{\8 }$
\begin{equation*}
\E |R|^s<\8 .
\end{equation*}
Suppose that $\theta <s_{\8}$ and consider $\theta \leq s\leq \theta +\d <s _{\8}$, $\d \leq \frac{\s }{\theta +\s}$. Proceeding as in Step 1 we prove that
\begin{equation*}
|\Lambda (s)|\leq \begin{cases} \E |Q|^s <\8 \hskip 140 pt \mbox{if}\ s\leq 1\\ s\max (1,2^{s-2})\E \left [|Q|^s+|Q||MR|^{s-1}\right ]\quad \mbox{if}\ s> 1.\end{cases}
\end{equation*}
Since, in the second case, $0<s-1<\theta $, applying the H\"older inequality as before we obtain that $|\L (s)| <\8$, which, by contradiction, proves that $\theta \geq s_{\8}$.

{\bf Proof of \eqref{subcom}.} Finally, to prove \eqref{subcom}, observe that for $a\geq b \geq 0, a>0$ , $u=ba^{-1}$ and $z=s+i\beta $
\begin{equation*}
|a ^z-b^z|=|a^z|| 1-u^z|=a^s|1-u^z|
\end{equation*}
so it is enough to prove that 
\begin{equation}\label{sub}
|1-u^z|\leq (2|z|+3)|1-u|^s, \ \mbox{for}\  0\leq u\leq 1.
\end{equation}
If $0\leq u\leq 1\slash  2$ then 
\begin{equation*}
|1-u^z|\leq |1-u|^s + u^s|1-u^{i\beta}|\leq |1-u|^s + 2u^s\leq 3 |1-u|^s.
\end{equation*}
If $1\slash 2<u\leq 1$ then
\begin{equation*}
|1-u^z|\leq \int _u^1 |z|r^{s-1}\ dr\leq |z||1-u|2^{1-s}\leq 2|z||1-u|^s.
\end{equation*}
\end{proof}
\section{Appendix}
	Let $D=\R $ or $D=\R \times \Z ^n_2$ be a direct product of $\R$ and $\Z ^n_2$. 
	We say that a function $ \psi $ defined on $D$ is direct Riemann integrable ($dRi$) if it is continuous and satisfies
	\begin{equation}\label{dri}
	\sum _{n\in \Z}\sup _{(u,k)\in \Delta _n}| \psi (u,k)|<\8 ,
	\end{equation}
	$\Delta _n=\{ (u,k): n\leq u <n+1 \}$.
	
	The above condition is a little bit stronger then the standard definition of $dRi$ (see \cite{revuz:1975}, Chapter 5) but it is sufficient for us and it is simpler to check.
	\begin{thm}\label{renewalthm}(\cite{revuz:1975})
		Let $\mu $ be a probability measure on the group $D$ such that $\supp \mu $ generates $D$ and
		\begin{equation*}
		\bfm=\int _Du \ d\mu (u,k)>0.
		\end{equation*}
		Then for every d-R-i function $\psi $ on $D$ we have
		\begin{equation*}
		\lim _{u\to \8} \psi *U(u,k)=\frac{1}{\bfm}\int _D \psi (u,k)du dk.
		\end{equation*}
	\end{thm}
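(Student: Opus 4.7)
The plan is to reduce the theorem to the classical Blackwell--Feller renewal theorem on $\R$ by exploiting the product structure of $D = \R \times \Z_2^n$ through Fourier analysis on the finite abelian group $\Z_2^n$. The characters of $\Z_2^n$ are indexed by $\varepsilon \in \{0,1\}^n$ via $\chi_\varepsilon(k) = \prod_{i=1}^n k_i^{\varepsilon_i}$, giving $2^n$ real-valued characters with $\chi(k)^2 = 1$. For each character $\chi$, I introduce a complex Radon measure on $\R$ by $d\mu_\chi(u) = \sum_{k \in \Z_2^n} \chi(k)\,d\mu(u,k)$ and a scalar function $\psi_\chi(u) = \sum_{k \in \Z_2^n} \chi(k)\psi(u,k)$. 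Fourier inversion on $\Z_2^n$ then gives $\psi(u,k) = 2^{-n}\sum_\chi \chi(k)\psi_\chi(u)$ together with the analogous identity for convolution, so that $\psi * U(u,k) = 2^{-n}\sum_\chi \chi(k)(\psi_\chi * U_\chi)(u)$, where $U_\chi = \sum_{m \geq 0} \mu_\chi^m$ is well defined as soon as $|\mu_\chi|_{TV} \leq 1$ with strict inequality for non-trivial $\chi$.

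I would then treat the trivial and the non-trivial characters separately. For $\chi_0 \equiv 1$, the measure $\mu_{\chi_0}$ is the $\R$-marginal of $\mu$, a probability measure with mean $\bfm > 0$. It is non-arithmetic: otherwise the projection of $\supp\mu$ onto $\R$ would lie in a lattice $a\Z$, so $\supp\mu \subset a\Z \times \Z_2^n$, a proper closed subgroup of $D$, contradicting the generation hypothesis. Also $\psi_{\chi_0}(u) = \sum_k \psi(u,k)$ satisfies the d-R-i condition \eqref{dri} on $\R$ because $\psi$ does on $D$. Blackwell's classical renewal theorem therefore yields
\begin{equation*}
\lim_{u \to \infty}(\psi_{\chi_0} * U_{\chi_0})(u) = \bfm^{-1}\int_\R \psi_{\chi_0}(v)\,dv = \bfm^{-1}\cdot 2^n \int_D \psi(u,k)\,du\,dk,
\end{equation*}
where $dk$ is the normalized counting measure on $\Z_2^n$.

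For each non-trivial $\chi$, I would argue $|\mu_\chi|_{TV} < 1$. By the triangle inequality $|\mu_\chi(\R)| \leq \int |\chi(k)|\,d\mu(u,k) = 1$, with equality only when $\chi$ is (essentially) constant on $\supp\mu$; since $\chi$ is a non-trivial character of $\Z_2^n$, this would force $\supp \mu$ into a coset of $\R \times \ker\chi$, a proper closed subgroup of $D$, again contradicting generation. Hence $U_\chi$ is a finite complex measure on $\R$. Combined with the fact that $\psi_\chi$ vanishes at infinity on $\R$ (an immediate consequence of \eqref{dri}), a dominated convergence argument gives $(\psi_\chi * U_\chi)(u) \to 0$ as $u \to \infty$. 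Summing the character contributions, the $2^{-n}$ prefactor exactly cancels the $2^n$ in the trivial-character limit and yields the stated formula.

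The main obstacle will be the non-trivial character step: rigorously justifying the decay $(\psi_\chi * U_\chi)(u) \to 0$ requires translating the d-R-i hypothesis into a quantitative tail bound for $\psi_\chi$ that can be convolved against the finite but non-compactly-supported signed measure $U_\chi$. This is the one place where the global summability \eqref{dri}, rather than just continuity plus pointwise decay, is genuinely used.
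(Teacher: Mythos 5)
The paper does not actually prove Theorem \ref{renewalthm}: it is quoted from Revuz and Athreya--McDonald--Ney, so there is no internal argument to compare yours with. Judged on its own terms, your character decomposition is a sound and standard skeleton: the Fourier transform in $k$ does diagonalize convolution on $D$, the trivial-character term is handled correctly by Blackwell's theorem (your derivation of non-arithmeticity of the $\R$-marginal from the generation hypothesis is right), and the $2^{\pm n}$ bookkeeping cancels as you say.

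The gap is in the non-trivial character step, exactly where you suspected trouble, but it is more serious than a missing tail estimate: the implication you rely on --- generation of $D$ implies $\|\mu_\chi\|_{TV}<1$ --- is false. Take $n=1$ and $\mu=\tfrac12\,\nu_0\otimes\delta_{+1}+\tfrac12\,\nu_1\otimes\delta_{-1}$ with $\nu_0,\nu_1$ uniform on $[0,1]$ and $[2,3]$ respectively. Then $\supp\mu$ generates $\R\times\Z_2$ and $\bfm=3/2>0$, yet $\mu_\chi=\tfrac12\nu_0-\tfrac12\nu_1$ has disjointly supported positive and negative parts, so $\|\mu_\chi\|_{TV}=1$. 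Equality in $\|\mu_\chi\|_{TV}\le 1$ is governed by mutual singularity of $\sum_{\chi(k)=1}\mu(\cdot\times\{k\})$ and $\sum_{\chi(k)=-1}\mu(\cdot\times\{k\})$ on $\R$, not by $\chi$ being constant on $\supp\mu$. Worse, $U_\chi$ need not be a finite measure at all: for $\mu=\nu\otimes\delta_{-1}$, which still generates $\R\times\Z_2$, one has $\mu_\chi=-\nu$ and $\|\mu_\chi^{*m}\|_{TV}=1$ for every $m$, so the ``finite complex measure plus dominated convergence'' step collapses. The correct aperiodicity statement lives on the joint Fourier transform: $|\widehat\mu(\lambda,\chi)|=|\int e^{i\lambda u}\chi(k)\,d\mu(u,k)|<1$ for all $(\lambda,\chi)\neq(0,\mathbf 1)$, and this \emph{does} follow from generation; but converting it into $\sum_m\psi_\chi*\mu_\chi^{*m}(u)\to0$ requires essentially the same machinery as Blackwell's theorem itself (a Choquet--Deny or Fourier-inversion argument run on $\R\times\Z_2^n$, or the Markov-chain coupling of Athreya--McDonald--Ney, which is what the paper cites). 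So the decomposition is salvageable, but the non-trivial characters cannot be dispatched by a total-variation contraction.
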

For the proof we refer to \cite{athreya:mcdonald:ney:1978} or \cite{revuz:1975}. For further explanations see also Appendix A in \cite{buraczewski:damek:mikosch:2016}. 

We shall recall also a lemma due to Landau. We consider a measure $\g $ defined on $\R ^+$ and its Mellin transform $\hat \g (s)=\int _{\R ^+}x^s\ d\g (x), s>0$. 

\begin{lem}\label{landau}
	Suppose that $\hat \g $ is well defined for $s<\theta $ and $\hat \g (s)=\8 $ for $s>\theta $. Then $\hat \g $ canot be extended holomorphically to a neighborhood of $\theta $.  
	\end{lem}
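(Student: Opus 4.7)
The plan is to run the classical Landau argument adapted to Mellin transforms of positive measures on $\R^+$. I would argue by contradiction: assume $\hat\g$ extends holomorphically to an open disk $D(\th,\rho)$ around $\th$, and then produce a real point $s_*>\th$ at which $\hat\g(s_*)$ is actually finite, contradicting the hypothesis.

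First I would reduce to the case $\th>0$. Replacing the measure $\g(dx)$ by $x^{-c}\g(dx)$ for any $c>-\th$ multiplies $\hat\g(s)$ by nothing worse than a shift of the abscissa to $\th+c>0$, and leaves the assumptions of the lemma intact; this is harmless. Next, split $\g=\g_1+\g_2$ with $\g_1=\g|_{[1,\8)}$ and $\g_2=\g|_{[0,1)}$. On $\{\Re s\ge 0\}$ one has $x^{\Re s}\le 1$ for $x\in[0,1)$, so $|\hat\g_2(s)|\le \g([0,1))<\8$ there and $\hat\g_2$ is holomorphic on $\{\Re s>0\}$ by dominated convergence / Morera. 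After shrinking $\rho$ so that $\rho<\th$, the difference $\hat\g_1=\hat\g-\hat\g_2$ inherits a holomorphic extension to $D(\th,\rho)$.

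Now comes the main step. I would Taylor expand $\hat\g_1$ around the real base point $s_0=\th-\rho/2$. Its domain of holomorphy contains $\{\Re z<\th\}\cup D(\th,\rho)$, and an elementary geometric computation shows that the distance from $s_0$ to the complement of this open set strictly exceeds $\rho/2$. Hence the Taylor series converges at some real $s_*=\th+\eta$ with $\eta>0$. Because $\g_1$ is supported on $[1,\8)$ where $\log x\ge 0$, every Taylor coefficient
\begin{equation*}
\tfrac{1}{k!}\hat\g_1^{(k)}(s_0)=\tfrac{1}{k!}\int_1^\8 x^{s_0}(\log x)^k\,d\g(x)
\end{equation*}
is non-negative. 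Tonelli then gives
\begin{equation*}
\sum_{k=0}^\8\frac{(s_*-s_0)^k}{k!}\int_1^\8 x^{s_0}(\log x)^k\,d\g(x)=\int_1^\8 x^{s_0}e^{(s_*-s_0)\log x}\,d\g(x)=\hat\g_1(s_*),
\end{equation*}
and the left-hand side equals the convergent Taylor sum at $s_*$, hence is finite. Thus $\hat\g_1(s_*)<\8$. But $s_*>0$ forces $\hat\g_2(s_*)<\8$, while $\hat\g(s_*)=\8$ by hypothesis, so $\hat\g_1(s_*)=\8$ — contradiction.

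The only delicate point is the claim that the Taylor radius of $\hat\g_1$ at $s_0$ actually reaches past $\th$; the non-trivial input there is that the assumed holomorphic extension glues a round disk to the half-plane, so the nearest singularity of $\hat\g_1$ sits strictly to the right of $\th$. Everything else is routine real-variable bookkeeping; the positivity of $\g_1$ (equivalently, the monotonicity of $\log x$ on $[1,\8)$) is the single feature that makes the sum–integral interchange legitimate, and this is why the splitting in step one is unavoidable.
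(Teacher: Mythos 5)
Your proof is correct. Note that the paper itself offers no proof of this lemma at all --- it is merely ``recalled'' as a classical result of Landau --- so there is nothing to compare against; what you have written is the standard Landau--Pringsheim argument (split off the part of the measure near the origin, Taylor-expand the remaining transform at a real base point $s_0=\th-\rho/2$, observe that gluing the disk $D(\th,\rho)$ to the half-plane $\{\Re z<\th\}$ pushes the radius of convergence strictly past $\rho/2$, and use positivity of the coefficients $\int_1^\8 x^{s_0}(\log x)^k\,d\g$ together with Tonelli to conclude finiteness of $\hat\g$ at a real point beyond $\th$). The geometric estimate is right: for $u=\Re z-\th\ge 0$ and $u^2+v^2\ge\rho^2$ one gets $|z-s_0|^2=(u+\rho/2)^2+v^2\ge \tfrac54\rho^2$. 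One cosmetic point: your bound $|\hat\g_2(s)|\le\g([0,1))$ presupposes that $\g$ has finite mass near $0$, which the lemma as stated does not literally guarantee for an arbitrary measure on $\R^+$; but in the paper's application $\g$ is the law of $|R|$, hence a probability measure, and even in general the monotonicity $x^{s_*}\le x^{s_1}$ for $x\in(0,1)$, $s_*\ge s_1$, together with $\hat\g(s_1)<\8$ for some $s_1<\th$, gives both the holomorphy of $\hat\g_2$ where you need it and its finiteness at $s_*$. So the argument stands.
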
 
$\theta $ is called {\it the abscissa of convergence}.
  
  \medskip
The rest of the Appendix contains two standard analytical computations proving properties used before.

	\begin{proof}[Proof of \eqref{polar}] 
		$\Phi $ is continuous and any $x$ in $\R ^d$ may be written as
		\begin{equation*}
		x=\d _{|x|_{\a }}\left (\d _{|x|^{-1}_{\a }}x\right )\quad \mbox{ with}\quad \d _{|x|^{-1}}x\in S^{d-1}.
		\end{equation*}
		To show that $\Phi $ is 1-1, suppose that 
		$$
		\d _{s_1}\o _1=\d _{s_2}\o _2\quad \mbox{and}\quad |\o _{1}|_{\a}=1,\ |\o _{2}|_{\a }=1.$$
		Then 
		\begin{equation*}
		s_2=|\d _{s_2}\o _{2}|_{\a}=|\d _{s_1}\o _{1}|_{\a}= s_1.
		\end{equation*} 
		and so $\o _1=\o _2$. Hence 
		\begin{equation*}
		\Phi ^{-1}(x)=(|x|_{\a},\d _{|x|^{-1}_{\a }}x ).
		\end{equation*}
		Finally, we prove that $\Phi ^{-1}$ is continuous. Indeed, let $x_n=\d _{|x _n| _{\a}}\o _n\to x=\d _{|x |_{\a }}\o\neq 0$. Then $|x_n|\to |x|$ and so 
		$\o _n =\d _{|x _n|^{-1}} x_n\to \o = \d _{|x |^{-1}}x$.  
	\end{proof}
	\begin{lem}
		$\phi _{\eps}$ defined \eqref{phieps} belongs to $H^{\zeta} $ for any  $\zeta \leq (\max _{1\leq j\leq d}\a _j)^{-1}$, 
		$\zeta <1$.
	\end{lem}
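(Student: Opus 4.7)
The plan is to bound $|\phi_\eps(x+y)-\phi_\eps(x)|$ termwise. Since $\phi_\eps=\sum_{j=1}^d\psi_\eps(|x_j|^{\a_j})$ and $\psi_\eps$ is piecewise linear with Lipschitz constant $1/\eps$, it suffices to show that for each $j$,
\begin{equation*}
\left|\psi_\eps(|x_j+y_j|^{\a_j})-\psi_\eps(|x_j|^{\a_j})\right|\leq C_j|y|_\a^{\zeta}.
\end{equation*}
Because $0\le \psi_\eps\le 1$, the left side is uniformly bounded by $1$; hence it suffices to treat the regime $|y|_\a\le 1$, since for $|y|_\a\ge 1$ we get $|y|_\a^\zeta\ge 1$ and the inequality is automatic with $C_j=1$. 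Under $|y|_\a\le 1$ we have in particular $|y_j|\le 1$ for every $j$.

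The main calculation will split on $\a_j$. First, if $\a_j\le 1$, then $t\mapsto t^{\a_j}$ is subadditive on $[0,\infty)$, giving
\begin{equation*}
\bigl||x_j+y_j|^{\a_j}-|x_j|^{\a_j}\bigr|\le |y_j|^{\a_j}\le |y|_\a\le |y|_\a^{\zeta},
\end{equation*}
where the last step uses $|y|_\a\le 1$ and $\zeta\le 1$. Combined with the Lipschitz bound on $\psi_\eps$ this yields the desired estimate. Second, when $\a_j>1$, the function $t\mapsto t^{\a_j}$ is no longer Hölder, so here the compact support of $\psi_\eps$ enters: if the difference is nonzero, at least one of $|x_j|^{\a_j},|x_j+y_j|^{\a_j}$ lies in $[1-\eps,1+\eps]\subset[3/4,5/4]$, so at least one of $|x_j|,|x_j+y_j|$ is bounded by $2^{1/\a_j}$, and since $|y_j|\le 1$ we get $\max(|x_j|,|x_j+y_j|)\le M_j:=2^{1/\a_j}+1$. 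By the mean value theorem applied to $t\mapsto t^{\a_j}$,
\begin{equation*}
\bigl||x_j+y_j|^{\a_j}-|x_j|^{\a_j}\bigr|\le \a_j|y_j|M_j^{\a_j-1}.
\end{equation*}
Now use $|y_j|=(|y_j|^{\a_j})^{1/\a_j}\le |y|_\a^{1/\a_j}\le |y|_\a^{\zeta}$, which is valid because $|y|_\a\le 1$ and the hypothesis $\zeta\le 1/\a_j$ (implied by $\zeta\le(\max_k\a_k)^{-1}$). Combining with the Lipschitz bound on $\psi_\eps$ again gives the required estimate with an absolute constant $C_j=\a_j M_j^{\a_j-1}/\eps$.

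Summing over $j$ gives $|\phi_\eps(x+y)-\phi_\eps(x)|\le C_f|y|_\a^{\zeta}$ with $C_f=\sum_{j=1}^d C_j$, proving $\phi_\eps\in H^{\zeta}$. The only mildly delicate step is the case $\a_j>1$, where the linear bound $|y_j|$ is converted into $|y|_\a^{1/\a_j}$ via the definition of $|\cdot|_\a$; this is exactly what forces the restriction $\zeta\le(\max_j\a_j)^{-1}$, and it is the reason why the support property of $\psi_\eps$ is used to confine $|x_j|$ to a bounded set so that the factor $M_j^{\a_j-1}$ is finite.
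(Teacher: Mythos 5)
Your proof is correct and follows essentially the same route as the paper's: reduce to $|y|_\a\le 1$ by boundedness, bound termwise via the Lipschitz property of $\psi_\eps$, use subadditivity of $t\mapsto t^{\a_j}$ when $\a_j\le 1$, and use the compact support of $\psi_\eps$ to confine $|x_j|$ to a bounded set when $\a_j>1$ so that the mean value theorem applies. In fact you spell out more explicitly than the paper does why the support property yields the bound $M_j$ on $|x_j|$ and $|x_j+y_j|$.
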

	\begin{proof}
		Since $\phi _{\eps}$ is bounded, it is enough to prove that
		\begin{equation*}
		I=|\phi _{\eps }(x+h)-\phi _{\eps}(x)|\leq C|h|_{\a}^{\zeta}
		\end{equation*}
		for $|h|_{\a}\leq 1$. But
		\begin{equation*}
		I\leq \sum _{j=1}^d\left |\psi _{\eps }(|x_j+h_j|^{\a _j})-\psi _{\eps }(|x_j|^{\a _j})\right |
		\end{equation*}
		and $\psi _{\eps }(|x_j+h_j|^{\a _j})-\psi _{\eps }(|x_j|^{\a _j})=0 $ for $|x_j|> (1+\eps)^{1\slash \a _j}+1$ and $|h_j|^{\a _j}\leq 1$. Moreover, $\psi _{\eps}$ is Lipschitz. So for every $j$ we have
		\begin{equation*}
		|	\psi _{\eps }(|x_j+h_j|^{\a _j})-\psi _{\eps }(|x_j|^{\a _j})|\leq C_{\eps} 
		\left ||x_j+h_j|^{\a _j}-|x_j|^{\a _j} \right |.
		\end{equation*}
		For $\a _j \leq 1$,
		\begin{equation*}
		\left ||x_j+h_j|^{\a _j}-|x_j|^{\a _j} \right | \leq |h_j|^{\a _j}\leq |h|_{\a}\leq |h|_{\a}^{\zeta},
		\end{equation*}
		and for $\a _j > 1$,
		\begin{equation*}
		\left ||x_j+h_j|^{\a _j}-|x_j|^{\a _j} \right | \leq C|h_j|\leq |h|_{\a}^{1\slash \a _j}\leq |h|_{\a}^{\zeta}
		\end{equation*}
		because  $x_j$ are bounded and $ |h|_{\a}\leq 1$.
	\end{proof}

\noindent {\bf Acknowledgments.}  The author would like to thank Tomas Kojar for pointing the connection between the Gaussian multiplicative chaos and stochastic difference equations.

	The work was supported by the NCN grant 2019/33/B/ST1/00207.

\end{document}